\theoremstyle{plain}
\newtheorem{teo}{Theorem}[section]
\newtheorem{pro}[teo]{Proposition}
\newtheorem{lem}[teo]{Lemma}
\newtheorem{cor}[teo]{Corollary}
\newtheorem*{teo13}{Theorem \ref{teo:main3}}
\theoremstyle{definition}
\newtheorem{defi}[teo]{Definition}
\numberwithin{equation}{teo}
\theoremstyle{remark}
\newtheorem{rem}[teo]{Remark}%[section]
\newtheorem*{example*}{Example}%[section]
\begin{document}

\title[Nilpotent actions on the torus]{Global fixed points for\\ nilpotent actions on the torus}

\author{S. Firmo and J. Rib\'on}
\address{Instituto de Matem\'{a}tica e Estat\'\i stica \\
Universidade Federal Fluminense\\
Campus do Gragoat\'a\\
Rua Marcos Valdemar de Freitas Reis s/n, 24210\,-\,201 Niter\'{o}i, Rio de Janeiro - Brasil }
\email{firmo@mat.uff.br}
\email{jribon@id.uff.br}
\subjclass{Primary:  37E30, 37E45, 37A15, 37A05 \ ; \ Secondary: 55M20, 37C25}

\thanks{Keywords: rotation vector, global fixed point, derived group, homeomorphism, diffeomorphism,
irrotational homeomorphism, ergodic probability measure,  nilpotent group}
\date{\today}
\thanks{Supported in part by CAPES}

\thispagestyle{empty}

\begin{abstract}

An isotopic to the identity map of the  \,$2$-torus, that has zero
rotation vector with respect
to an invariant ergodic probability measure, has a fixed point by a theorem of Franks.
We give a version of this result
for nilpotent subgroups of isotopic to the identity diffeomorphisms of the \,$2$-torus.
In such a context we guarantee the existence of global fixed points for
nilpotent groups of
irrotational diffeomorphisms. In particular we show that the derived group of a nilpotent group
of isotopic to the identity diffeomorphisms of the $2$-torus has a global fixed point.

\end{abstract}

\maketitle

\thispagestyle{empty}

%%%%%%%%%%%%%%%%%%%%%%%%%%%%%%%%%%%%%%%%%%%%%%%%%%%%%%%%
\vskip20pt
\section{Introduction}
%\vskip10pt
%%%%%%%%%%%%%%%%%%%%%%%%%%%%%%%%%%%%%%%%%%%%%%%%%%%%%%%%

 Abelian groups of isotopic to the identity  \,$C^{1}$-diffeomorphisms
of a closed oriented surface \,$S$\, have global fixed points when the Euler characteristic
of \,$S$\, is different from zero. When \,$S$\, is the \,$2$-sphere it is also assumed  that
the group is generated by diffeomorphisms \,$C^{0}$-close to the identity map of \,$S$.

In these cases the existence of global fixed points is essentially imposed by the topology of \,$S$\,
and its interplay with the dynamics of nilpotent (or abelian)  actions.
 Several papers have dealt with this question
as \cite{bo01,bo02,han01,fir01,fhp01,fhp02,fir03} on the abelian case, and as
\cite{fir02,JR:arxivsp,rib01} for the nilpotent ones.

The starting point for this kind of problem has been the articles of Lima \cite[1964]{elon02} and Plante  \cite[1986]{pl01} where   they proved the existence of global fixed points for the action of abelian and nilpotent connected Lie groups respectively, on surfaces with Euler characteristic different from zero.

When \,$S$\, is the \,$2$-torus \,$\mathbb{T}^{2}$\, we need to assume more conditions on the group,
other than the nilpotent property, to guarantee the existence of global fixed points.

In a recent paper, \,{\it Finite orbits for nilpotent actions on the torus}\, (cf. \cite{rib02}),
we study the existence of finite orbits for the action
assuming   hypothesis of topological type on the group. More precisely we show that if a nilpotent group
of $C^{1}$ diffeomorphisms of the $2$-torus has an element with non-vanishing Lefschetz number then it
has a finite orbit.
In this article we follow a different approach, considering hypothesis of dynamical nature
on the group that are inspired in a result of Franks (cf. Theorem \ref{teo:franks}).
%\textcolor{blue}{He} proves that if an isotopic to the identity
%\,$\mathbb{T}^{2}$-homeomorphism  has zero rotation vector with respect to an invariant ergodic probability
% measure then it has a fixed point \cite{f22}.

 To approach this question let
us consider a homeomorphism \,$\phi$\, of the torus \,$\mathbb{T}^{2}$\, that belongs to
the connected component of the identity, \,$\mathrm{Homeo}_{0}({\mathbb T}^{2})$\,,
of \,$\mathrm{Homeo}({\mathbb T}^{2})$. Let \,$\tilde{\phi}$\, be a lift of \,$\phi$\, to
the universal covering \,${\mathbb R}^{2}$\, of \,${\mathbb T}^{2}$.
Following \cite{mi03}, the rotation vectors
of \,$\tilde{\phi}$\, are the limits of  sequences of the form
\[ \frac{\tilde{\phi}^{\,n_{k}}(\tilde x_{k}) - \tilde x_{k}}{n_{k}} \]
where \,$(n_{k})_{k \geq 1}$\, is  an increasing sequence  of positive integers and
\,$(\tilde x_{k})_{k \geq 1}$\, is a sequence of points in \,$\mathbb{R}^{2}$.
This set will be denoted by \,$\rho\big(\tilde\phi\big)$. It follows from
\cite{mi03} that it is a non-empty compact and convex subset of \,$\mathbb{R}^{2}$.

Equivalently we have that
\,$\rho \big(\tilde{\phi}\big) = \big\{ \rho_{\mu} \big(\tilde{\phi}\big) \ \, ; \ \mu \in {\mathcal P}(\phi) \big\}$\,
where \,${\mathcal P}(\phi)$\, is the set of $\phi$-invariant Borel probability measures and
$$ {\rho}_{\mu}\big(\tilde{\phi}\big): = \int_{{\mathbb T}^{2}} \big(\tilde{\phi} - Id \big)
 \ d \mu \,. $$
Since \,$\phi$\, belongs to \,$\mathrm{Homeo}_{0}({\mathbb T}^{2})$,
it follows that
\,$\tilde{\phi}$\, commutes with the covering transformations and
\,$\tilde{\phi}-Id$\, is a well-defined map on \,${\mathbb T}^{2}$.
The set \,$\rho \big(\tilde{\phi}\big)$\,  depends
on the lift \,$\tilde{\phi}$\, of \,$\phi$\, and satisfies
\,$\rho_{\mu} \big(T_{v} \circ \tilde{\phi}\big) = T_{v}\big(\rho_{\mu} \big(\tilde{\phi}\big)\big)$\,
for any \,$\mu \in {\mathcal P}(\phi)$\, where
 \,$T_{v}$\, is the  translation in \,$\mathbb{R}^{2}$\, by the vector
 \,$v\in\mathbb{Z}^{2}$.
In particular the projection \,$\rho (\phi)$\, of \,$\rho \big(\tilde{\phi}\big)$\, in
\,${\mathbb R}^{2}/{\mathbb Z}^{2}$\,  does not depend on the lift
\,$\tilde{\phi}$\, of \,$\phi$.

\begin{teo}[Franks \cite{f22}]
\label{teo:franks}
Let \,$\phi \in \mathrm{Homeo}_{0}({\mathbb T}^{2})$. Let \,$\mu$\, be an ergodic $\phi$-invariant Borel probability
measure such that the rotation vector \,$\rho_{\mu}(\phi)$\, is  the zero element
of \,${\mathbb R}^{2}/{\mathbb Z}^{2}$. Then \,$\phi$\, has a fixed point.
\end{teo}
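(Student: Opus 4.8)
The plan is to reduce the existence of a fixed point of \,$\phi$\, to a recurrence statement for a lift, and then to invoke planar Brouwer theory. Since \,$\rho_{\mu}(\phi)$\, is the zero class in \,$\mathbb{R}^{2}/\mathbb{Z}^{2}$, I first choose the lift \,$\tilde{\phi}$\, of \,$\phi$\, so that the integer vector \,$\rho_{\mu}(\tilde{\phi})=\int_{\mathbb{T}^{2}}(\tilde{\phi}-Id)\,d\mu$\, equals \,$0\in\mathbb{R}^{2}$; any fixed point of this \,$\tilde{\phi}$\, projects to a fixed point of \,$\phi$, so it suffices to show that \,$\tilde{\phi}$\, has a fixed point in \,$\mathbb{R}^{2}$. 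I would argue by contradiction, assuming \,$\tilde{\phi}$\, is fixed-point free.

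The first key step is a telescoping identity. Writing \,$\psi:=\tilde{\phi}-Id$, which is a well-defined \,$\mathbb{R}^{2}$-valued function on \,$\mathbb{T}^{2}$\, because \,$\phi\in\mathrm{Homeo}_{0}(\mathbb{T}^{2})$, and fixing a lift \,$\tilde{x}$\, of a point \,$x\in\mathbb{T}^{2}$, the Birkhoff sums of \,$\psi$\, along the \,$\phi$-orbit of \,$x$\, collapse:
\[ \sum_{k=0}^{n-1}\psi(\phi^{k} x)=\sum_{k=0}^{n-1}\big(\tilde{\phi}^{\,k+1}\tilde{x}-\tilde{\phi}^{\,k}\tilde{x}\big)=\tilde{\phi}^{\,n}(\tilde{x})-\tilde{x}. \]
Thus the partial sums of the zero-mean cocycle \,$\psi$\, record precisely the displacement of \,$\tilde{x}$\, under the iterates of \,$\tilde{\phi}$.

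The second step is ergodic recurrence. Since \,$\mu$\, is ergodic and \,$\int_{\mathbb{T}^{2}}\psi\,d\mu=0$, the recurrence theorem for integrable cocycles with zero integral over an ergodic system (Atkinson's theorem, together with its \,$\mathbb{R}^{2}$-valued refinement, valid precisely in dimensions at most two) applies and gives, for \,$\mu$-almost every \,$x$, the identity \,$\liminf_{n\to\infty}\big\|\sum_{k=0}^{n-1}\psi(\phi^{k} x)\big\|=0$. Combined with the telescoping identity, this yields for a \,$\mu$-generic \,$x$\, with lift \,$\tilde{x}$\, an increasing sequence \,$n_{j}\to\infty$\, with \,$\tilde{\phi}^{\,n_{j}}(\tilde{x})\to\tilde{x}$\, in \,$\mathbb{R}^{2}$. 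The dimension restriction is essential, since recurrence of mean-zero cocycles can fail in dimension three or higher, which is exactly why the statement is special to the \,$2$-torus.

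Finally I would reach a contradiction through Brouwer theory. The point \,$\tilde{x}$\, is non-wandering for \,$\tilde{\phi}$: for a small disk \,$D$\, around \,$\tilde{x}$\, and \,$j$\, large enough one has \,$\tilde{\phi}^{\,n_{j}}(\tilde{x})\in D$, hence \,$\tilde{\phi}^{\,n_{j}}(D)\cap D\neq\emptyset$. But \,$\tilde{\phi}$\, is an orientation-preserving homeomorphism of the plane with no fixed points, and by the Brouwer plane translation theorem (equivalently, Franks' lemma on the nonexistence of periodic disk chains) such a homeomorphism has no non-wandering points. This contradiction shows that \,$\tilde{\phi}$, and therefore \,$\phi$, must have a fixed point. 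The main obstacle is securing the \,$\mathbb{R}^{2}$-valued recurrence of the displacement cocycle under the zero-mean hypothesis; once recurrence is in hand, the passage from a recurrent point to a fixed point is a clean application of classical planar topology.
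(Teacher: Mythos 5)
This statement is quoted from Franks \cite{f22}; the paper supplies no proof of it, so I can only judge your argument on its own terms and against the way the result is actually established in the literature (and against the closely related Lemma \ref{lem:main} of this paper, which reruns Franks' scheme). Your skeleton is the right one: normalize the lift so that $\int_{{\mathbb T}^{2}}(\tilde{\phi}-Id)\,d\mu=(0,0)$, observe that the Birkhoff sums of the displacement cocycle telescope to $\tilde{\phi}^{\,n}(\tilde{x})-\tilde{x}$, produce a recurrent point for $\tilde{\phi}$ in ${\mathbb R}^{2}$, and contradict the Brouwer plane translation theory for a fixed-point-free orientation-preserving planar homeomorphism. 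The telescoping identity, the final Brouwer step, and the reduction to the lift are all correct.

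The gap is the step you yourself flag as ``the main obstacle'': there is no ``${\mathbb R}^{2}$-valued refinement of Atkinson's theorem.'' Atkinson's recurrence theorem \cite{Atk} is genuinely one-dimensional: for a zero-mean integrable (even bounded) cocycle with values in ${\mathbb R}^{d}$, $d\geq 2$, over an ergodic system, $\liminf_{n}\|S_{n}\psi(x)\|=0$ can fail --- applying Atkinson to each coordinate of $\psi$ gives two subsequences along which the respective coordinates of the sums are small, but nothing forces these subsequences to meet, and zero mean alone does not control $\|S_{n}\psi\|/\sqrt{n}$, which is what a Chung--Fuchs-type recurrence in dimension two would require. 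So the displacement cocycle of $\tilde{\phi}$ is not known (and is not in general true) to be recurrent in ${\mathbb R}^{2}$ merely because $\rho_{\mu}(\tilde{\phi})=(0,0)$; obtaining planar recurrence, or a substitute for it, is precisely the content of Franks' theorem rather than an off-the-shelf input. The actual arguments use only the one-dimensional Atkinson lemma in a single direction $v$ --- giving $\phi^{n_{k}}(z)\to z$ together with $\langle\tilde{\phi}^{n_{k}}(\tilde{z})-\tilde{z},v\rangle\to 0$, as in Lemma 4.4 of \cite{koro01} --- and then supply the missing control of the transverse coordinate by topology: Franks works with chains of free disks and the sub-semigroup of ${\mathbb Z}^{2}$ of attainable return vectors, while Lemma \ref{lem:main} of this paper confines the lifted orbit to a strip between translates of a lifted essential curve before concluding $\tilde{\phi}^{n_{k}}(\tilde{z})\to\tilde{z}$. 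Without an argument of this kind your proof does not close.
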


%A theorem of Franks  in  \cite{f22} states that if there exists a $\phi$-invariant
%ergodic probability measure
%\,$\mu$\, such that the rotation vector \,$\rho_{\mu}(\phi)$\, is  the zero element
%of \,${\mathbb R}^{2}/{\mathbb Z}^{2}$\,
%then \,$\phi$\, has a fixed point.
The next result is a generalization
of this theorem in the context of nilpotent groups of diffeomorphisms.

\vglue5pt

\noindent
 {\it Conventions}. From now on, we make the following conventions. A homeomorphism
\,$\tilde{\psi}\in \text{Homeo}(\mathbb{R}^{2})$\, always denotes a lift to the universal covering of
\,${\psi}\in \text{Homeo}(\mathbb{T}^{2})$\, and vice-versa. Moreover,
\,$\pi:\mathbb{R}^{2}\rightarrow\mathbb{T}^{2}$\, denotes the universal covering map and
unless explicitly stated otherwise a lift means a lift to the universal covering.

%%%%%%%%%%%%%%%%%%%%%%%%%%%%%%%%%%%%%%%%%%%%%%%%
\vskip10pt
\begin{teo}
\label{teo:main4}
Let \,$G= \langle H,\phi \rangle$\, be a nilpotent subgroup of
\,$\mathrm{Diff}_{0}^{1}({\mathbb T}^{2})$\,
where \,$H$\, is a normal subgroup of \,$G$. Suppose  there exists a $\phi$-invariant
ergodic probability measure \,$\mu$\, such that the support of \,$\mu$\,
is contained in
\,$\mathrm{Fix}(H)$\, and \,$\rho_{\mu}(\phi)$\, is the zero element in \,$\mathbb{R}^{2}/\mathbb{Z}^{2}$.
Then \,$G$\, has a global fixed point.
\end{teo}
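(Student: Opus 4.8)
The plan is to reduce everything to producing one point of $K:=\mathrm{Fix}(H)$ fixed by $\phi$. Since $\mathrm{supp}(\mu)\subseteq K$, the set $K$ is nonempty, and it is closed, hence compact. It is also $\phi$-invariant: for $x\in K$ and $h\in H$ one has $h(\phi(x))=\phi\big((\phi^{-1}h\phi)(x)\big)=\phi(x)$, because $\phi^{-1}h\phi\in H$ by normality and $x\in\mathrm{Fix}(H)$. Therefore $\phi|_K$ is a homeomorphism of the compact invariant set $K$, and any $x\in K$ with $\phi(x)=x$ is fixed by $H$ and by $\phi$, hence by all of $G=\langle H,\phi\rangle$. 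Thus it suffices to find a fixed point of $\phi$ inside $K$. I also record that, as $G/H$ is generated by $\phi H$ and hence cyclic, $[G,G]\subseteq H$; in particular $[G,G]$ fixes $K$ pointwise and the $G$-action on $K$ factors through the cyclic group $\langle \phi|_K\rangle$.

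Next I would exploit the rotation vector. Since $\rho_\mu(\phi)$ is the zero class in $\mathbb{R}^2/\mathbb{Z}^2$, I may choose the lift $\tilde\phi$ so that $\rho_\mu(\tilde\phi)=\int_{\mathbb{T}^2}(\tilde\phi-Id)\,d\mu=0$ as an honest vector in $\mathbb{R}^2$. Ergodicity of $\mu$ together with Atkinson's recurrence theorem (applied to the zero-mean displacement cocycle $\tilde\phi-Id$) then gives, for $\mu$-almost every $x$, integers $n_k\to\infty$ with $\tilde\phi^{\,n_k}(\tilde x)-\tilde x\to 0$; such $x$ lie in $\mathrm{supp}(\mu)\subseteq K$. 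This is exactly the recurrence that powers Franks' theorem (Theorem~\ref{teo:franks}): were $\phi$ fixed-point free, $\tilde\phi$ would be fixed-point free and Brouwer plane--translation theory, through free disk chains, would force a uniform drift of orbits incompatible with the recurrence above. Thus $\phi$ has a fixed point on $\mathbb{T}^2$.

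The crux, however, is that Theorem~\ref{teo:franks} only locates a fixed point \emph{somewhere} on $\mathbb{T}^2$, whereas I need one in $K$. I would therefore analyze $\phi$ relative to the invariant compact set $K$. The homeomorphism $\phi$ permutes the connected components of $K$, and by ergodicity $\mathrm{supp}(\mu)$ meets only the components of a single $\phi$-cycle $C_0,\dots,C_{m-1}$. Passing to $\phi^m$, which fixes $C_0$ setwise, and to the normalized restriction $\mu_0$ of $\mu$ to $C_0$, a telescoping computation gives $\rho_{\mu_0}(\tilde\phi^{\,m})=m\,\rho_\mu(\tilde\phi)=0$. The localized task is then to produce a fixed point of $\phi^m$ in the continuum $C_0$ and afterwards to upgrade it to a $\phi$-fixed point of $K$.

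I expect this last step to be the main obstacle, and it is where nilpotency must enter. The competing scenarios---$\phi$ acting on an invariant circle in $K$ as an irrational rotation, or cyclically permuting a finite orbit in $K$ with $m>1$---both produce a zero rotation vector and yet no global fixed point, so they have to be excluded. Since the $G$-action on $K$ is merely cyclic, nilpotency cannot be used on $K$ directly; instead I would use it through the transversal action of $H$ along $K$, where the commutator subgroup $[G,G]\subseteq H$ resides, together with an induction on the nilpotency class of $G$. Concretely, a nontrivial nilpotent $G$ has nontrivial center, and for a central element $z$ the fixed set $\mathrm{Fix}(z)$ is $G$-invariant; restricting the action to a proper invariant subset carved out by $z$, or quotienting by a central subgroup that acts trivially, lowers the class and, via the inductive hypothesis applied to the measure $\mu_0$ on $C_0$, forces the cycle length $m$ to equal $1$ and rules out the irrational-rotation model, so that the dynamics on $C_0$ carries a genuine fixed point. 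Combining this nilpotent reduction with the recurrence/Franks mechanism on $C_0$ yields a point of $K$ fixed by $\phi$, hence a global fixed point of $G$.
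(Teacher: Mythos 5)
Your opening reduction is correct but essentially tautological: a global fixed point of $G=\langle H,\phi\rangle$ is by definition a point of $\mathrm{Fix}(H)\cap\mathrm{Fix}(\phi)$, and the recurrence input (Atkinson-type recurrence for the zero-mean displacement cocycle, powering Franks' theorem) is indeed the same mechanism the paper uses in Lemma \ref{lem:main} via Koropecki--Tal. The genuine gap is in what you call the crux. The two scenarios you propose to ``exclude'' --- $\phi$ acting on an invariant circle in $\mathrm{supp}(\mu)$ as an irrational rotation, or permuting a finite orbit with $m>1$ --- are \emph{not} excluded by the hypotheses; they are perfectly consistent with them (take $\phi$ supported in a disc, rotating irrationally an inessential invariant circle carrying $\mu$, with $H$ trivial). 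In these cases the theorem still holds, but the global fixed point lies outside $\mathrm{supp}(\mu)$ and outside the components $C_0,\dots,C_{m-1}$ of $\mathrm{Fix}(H)$ that you restrict attention to, so your ``localized task'' of producing a fixed point of $\phi^m$ in the continuum $C_0$ is simply not achievable in general. The suggested induction on nilpotency class via central elements is not developed and cannot substitute: restricting to $\mathrm{Fix}(z)$ for central $z$ may give back $\mathrm{Fix}(H)$ itself and produces no reduction, and as you yourself note the action on $K$ factors through a cyclic group, so nothing nilpotent is left to induct on there.

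What is actually needed, and what the paper supplies, is a mechanism for locating a common fixed point of $H$ and $\phi$ \emph{away from} $\mathrm{supp}(\mu)$. This requires three ingredients absent from your sketch: (1) the planar fixed point theorems for finitely generated nilpotent groups of $C^{1}$ diffeomorphisms preserving a compact set or having an element with compact nonempty fixed point set (Theorems \ref{teo:plane} and \ref{cor:plane}), which is where nilpotency and the $C^{1}$ hypothesis really enter; (2) a Thurston decomposition relative to $\mathrm{supp}(\mu)$ (Proposition \ref{pro:tnf} and Lemma \ref{lem:nness}) showing each $h\in H$ is isotopic to the identity rel $\mathrm{supp}(\mu)$, so that $H$ admits a canonical normal lift to the universal cover of a complementary component; and (3) the Franks--Handel--Parwani result on proper homotopies (Theorem \ref{teo:fhpg}) combined with the recurrence, which produces a fixed point $z$ of $\phi$ in the set $Q(\mathrm{supp}(\mu),\phi)$, i.e.\ one for which the lift $\tilde{\phi}_{\tilde{z}}$ has nonempty \emph{compact} fixed point set (Lemmas \ref{lem:compact} and \ref{lem:main}); one then applies (1) to $\langle \tilde{H},\tilde{\phi}_{\tilde{z}}\rangle$. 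Your proposal contains nothing that could play the role of (1) or (3), so the argument does not close.
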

%%%%%%%%%%%%%%%%%%%%%%%%%%%%%%%%%%%%%%%%%%%%%%%%%%

We denote by \,$\langle H,\phi \rangle$\,  the group generated by \,$H$\, and \,$\phi$.
Moreover, \,$\mathrm{Fix}(\phi)$\, denotes the set of fixed points
of  \,$\phi$.
We define  \,$\mathrm{Fix}(L) = \cap_{f \in L} \mathrm{Fix}(f)$\,
as the set of global fixed points of a set \,$L$\, of transformations.
By definition \,$\mathrm{Diff}_{0}^{1}({\mathbb T}^{2})$\, is the group of
$C^{1}$-diffeomorphisms of \,$\mathbb{T}^{2}$\, which are \,$C^{0}$-isotopic
to the identity.
%isotopic to the identity $C^{1}$-diffeomorphisms
%of \,$\mathbb{T}^{2}$.

Let us remark that the fixed point detected by the theorem of Franks is not
necessarily contained in the support of the invariant ergodic measure.
However, if the homeomorphism has a unique fixed point then it
belongs to the aforementioned support.
This follows from the next two remarks. If \,$\phi$\, has a unique fixed point
\,$p\in\mathbb{T}^{2}$\, then the fixed point index of \,$\phi$\, at \,$p$\, is zero
by the Lefschetz fixed-point theorem.
In this case, a result of
Schmitt in \cite{sch02} guarantees that we can
perturb \,$\phi$\, in an arbitrarily small neighborhood of \,$p$\,
to obtain \,$\psi  \in \textrm{Homeo}_{0}(\mathbb{T}^{2})$\, with no fixed points.

Of course, from the remarks above is simple to conclude the existence of a global fixed point for \,$\langle H,\phi \rangle$\, in Theorem \ref{teo:main4} when \,$\phi$\, has a unique fixed point even in the case where \,$\langle H,\phi \rangle \subset
\text{Homeo}_{0}(\mathbb{T}^{2})$\,.

The previous theorem guarantees, in particular, that two commuting elements
\,$\phi$\, and \,$\psi$\, of
\,$\mathrm{Diff}_{0}^{1}({\mathbb T}^{2})$\, share a common fixed point
if there exists a $\phi$-invariant ergodic  probability measure \,$\mu$\, whose support
is contained in \,$\mathrm{Fix}(\psi)$\, and
\,$\rho_{\mu}(\phi)=0\in\mathbb{R}^{2}/\mathbb{Z}^{2}$.
Notice that the original Franks theorem
holds true for homeomorphisms but we need the differentiability since we use
a Thurston decomposition \,{\it \`{a} la}\, Franks-Handel-Parwani \cite{fhp01}.

%\textcolor{red}{It is not difficult to construct an abelian group
%\,$G=\langle \psi,\varphi\rangle \subset \mathrm{Diff}^{1}_{0}(\mathbb{T}^{2})$\,
% and an ergodic probability measure, satisfying the hypothesis of Theorem 1.1  and such that \,$G$\,  has a unique global fixed point which does not belong to the support of the measure.}

%\vglue5pt

We say that an element \,$\phi$\, of
$\mathrm{Homeo}_{0}({\mathbb T}^{2})$\, is
\,{\it irrotational}\,  if there exists a  lift
\,$\tilde{\phi}\in \mathrm{Homeo}_{0}({\mathbb R}^{2})$\, of \,$\phi$\,  that
has \,$(0\,,0)$\, as its unique rotation vector,
i.e. \,$\rho_{\mu}\big(\tilde{\phi}\big)=(0\,,0)$\, for any  \,$\mu\in\mathcal{P}(\phi)$.
In that  case \,$\tilde{\phi}$\, is unique and  we say that it
is the \,{\it irrotational lift}\, of \,$\phi$.
A subgroup \,$G$\, of \,$\mathrm{Homeo}_{0}({\mathbb T}^{2})$\, is irrotational if
every of its elements is irrotational.

\vglue5pt

%The subgroup induced on \,$\mathrm{Homeo}_{0}({\mathbb T}^{2})$\,  by the translations of \,${\mathbb R}^{2}$\,
%is abelian and has no global fixed point. In contrast with this example, Theorem \ref{teo:main4}
%suggests that global fixed points exist in the irrotational case.
%%
%%
%The following theorem is a consequence of Theorem \ref{teo:main4}
%and  simple well-known properties of nilpotent groups.

 The result of Franks \cite{f22} implies, in particular, that the irrotational elements of the group
\,$\text{Homeo}_{0}(\mathbb{T}^{2})$\, have fixed points. Following the context of Franks and of
Theorem \ref{teo:main4},
it is natural to study the existence of global fixed points for
irrotational  nilpotent  subgroups of \,$\mathrm{Diff}_{0}^{1}({\mathbb T}^{2})$.
We solve this problem in the next theorem.
It  will follow as a consequence of Theorem \ref{teo:main4}
and  simple well-known properties of nilpotent groups.

%%%%%%%%%%%%%%%%%%%%%%%%%%%%%%%%%%%%%%%%%%%%%
\vskip10pt
\begin{teo}
\label{teo:main2}
Let \,$G$\, be an irrotational nilpotent subgroup of \,$\mathrm{Diff}_{0}^{1}({\mathbb T}^{2})$.
Then \,$G$\, has a global fixed point.
\end{teo}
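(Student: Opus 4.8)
The plan is to deduce Theorem \ref{teo:main2} from Theorem \ref{teo:main4} by inducting on the nilpotency class of $G$, using the lower central series to produce, at each stage, a normal subgroup and a single diffeomorphism to which the hypotheses of Theorem \ref{teo:main4} apply. Let me sketch the structure.

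First I would reduce to the finitely generated case. Since every element of $G$ is irrotational, each $\phi\in G$ has a fixed point by the theorem of Franks, and more importantly every finitely generated subgroup of $G$ is itself an irrotational nilpotent subgroup of $\mathrm{Diff}_{0}^{1}({\mathbb T}^{2})$. If every finitely generated subgroup has a global fixed point, then the family of fixed-point sets $\{\mathrm{Fix}(\phi)\}_{\phi\in G}$ is a family of nonempty compact subsets of ${\mathbb T}^{2}$ with the finite intersection property, so by compactness their total intersection $\mathrm{Fix}(G)$ is nonempty. Thus it suffices to treat $G$ finitely generated, where I may induct on the nilpotency class.

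The heart of the argument is the inductive step. Suppose $G$ has nilpotency class $c$ and write $G=\langle g_{1},\dots,g_{m}\rangle$. Let $\Gamma = [G,G]$ be the derived (or more conveniently the second term of the lower central series), which is nilpotent of smaller class, normal in $G$, and again irrotational. By the inductive hypothesis $H:=\Gamma$ has a global fixed point, so $\mathrm{Fix}(H)$ is a nonempty closed $G$-invariant set; it is $G$-invariant because $H$ is normal in $G$, so for $\phi\in G$ and $h\in H$ one has $\phi\circ h\circ\phi^{-1}\in H$, whence $\phi(\mathrm{Fix}(H))=\mathrm{Fix}(H)$. Now pick any generator $\phi:=g_{i}$; I want to apply Theorem \ref{teo:main4} to the nilpotent group $\langle H,\phi\rangle$ with this normal $H$. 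To do so I must supply a $\phi$-invariant ergodic probability measure $\mu$ whose support lies in $\mathrm{Fix}(H)$ and with $\rho_{\mu}(\phi)=0$. Since $\mathrm{Fix}(H)$ is compact and $\phi$-invariant, $\phi$ restricts to a homeomorphism of it and hence admits an invariant Borel probability measure supported in $\mathrm{Fix}(H)$; by the ergodic decomposition I may take $\mu$ ergodic. Because $\phi$ is irrotational, its irrotational lift $\tilde\phi$ has $(0,0)$ as its unique rotation vector, so $\rho_{\mu}(\phi)=0$ in ${\mathbb R}^{2}/{\mathbb Z}^{2}$ for this (indeed for every) invariant measure. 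Theorem \ref{teo:main4} then yields a global fixed point of $\langle H,\phi\rangle$, i.e.\ a point of $\mathrm{Fix}(H)\cap\mathrm{Fix}(\phi)$.

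The main obstacle is passing from ``each generator fixes a point of $\mathrm{Fix}(H)$'' to ``all generators fix a common point of $\mathrm{Fix}(H)$''. The one-generator conclusion only gives, for each $i$, that $\mathrm{Fix}(H)\cap\mathrm{Fix}(g_{i})\neq\varnothing$, not that the intersection over all $i$ is nonempty. To close this gap I would run the argument not on $\mathrm{Fix}(H)$ directly but iteratively, peeling off generators one at a time while keeping the relevant set normal and the restricted map irrotational. Concretely, set $K_{0}=\mathrm{Fix}(H)$ and, having produced a nonempty compact $\langle H,g_{1},\dots,g_{j}\rangle$-invariant set $K_{j}\subset\mathrm{Fix}(H)$ on which all of $g_{1},\dots,g_{j}$ act trivially, apply Theorem \ref{teo:main4} to $\langle H_{j},g_{j+1}\rangle$ where $H_{j}=\langle H,g_{1},\dots,g_{j}\rangle$ is normal in $G$ by normality of the lower central series together with the structure of the construction; an invariant ergodic measure for $g_{j+1}$ supported in $\mathrm{Fix}(H_{j})=K_{j}$ exists by compactness, and $\rho_{\mu}(g_{j+1})=0$ by irrotationality, producing $K_{j+1}=\mathrm{Fix}(H_{j+1})\neq\varnothing$. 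After $m$ steps $K_{m}\subset\mathrm{Fix}(G)$ is nonempty. Verifying that each $H_{j}$ is normal in $G$ and remains nilpotent is where the ``simple well-known properties of nilpotent groups'' alluded to in the statement are used, and it is the one point demanding care rather than routine computation.
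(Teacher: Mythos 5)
Your proposal is correct and follows essentially the same route as the paper: the paper also reduces to the finitely generated case and then adjoins generators one at a time to a term of a central series, applying Theorem \ref{teo:main4} at each step to the pair (previous normal subgroup, new generator), the only cosmetic difference being that it climbs the upper central series $Z^{(k)}(G)$ rather than inducting on nilpotency class via $G'=[G,G]$. The normality of your $H_{j}$, which you flag as the one delicate point, is in fact immediate, since any subgroup of $G$ containing $G'$ is normal in $G$.
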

%%%%%%%%%%%%%%%%%%%%%%%%%%%%%%%%%%%%%%%%%%%%%

Let us remark that the existence of fixed points for each element of a nilpotent subgroup \,$G$\, of \,$\mathrm{Diff}^{1}_{0}(\mathbb{T}^{2})$\, is not sufficient to guarantee a global fixed point for \,$G$. In the above result we exploit the fixed points given by the irrotational nature of the elements of \,$G$.

The dynamics of irrotational homeomorphisms
has been studied by Koropecki and Tal (cf. \cite[Theorem A]{koro01})
in the area-preserving case.

%\cite{fh03}

% In this article, we are interested in determining the existence of global fixed points
% or
%finite orbits for nilpotent subgroups of \,$\mathrm{Diff}^{1}({\mathbb T}^{2})$.

 Motivated by Theorem \ref{teo:main2}
it is natural to search for examples of irrotational groups.
The next theorem illustrates that
irrotational groups appear naturally in the context of nilpotent groups.

%%%%%%%%%%%%%%%%%%%%%%%%%%%%%%%%%%%%%%%%%%%%%%%%%%%%%%%%
%\vskip10pt
%\begin{teo}
%\label{teo:main3}
%Let \,$G$\, be a nilpotent subgroup of \,$\mathrm{Homeo}_{0}({\mathbb T}^{2})$.
%Then the derived group \,$G'$\, is irrotational.
%Changing \,$\mathrm{Homeo}_{0}({\mathbb T}^{2})$\, by
%\,$\mathrm{Homeo}_{+}({\mathbb T}^{2})$\, $\big($resp. $\mathrm{Homeo}({\mathbb T}^{2})$\,$\big)$ we have that
%\,$G''$\, $\big($resp. \,$G'''$\,$\big)$ is irrotational.
%
%\end{teo}
%%%%%%%%%%%%%%%%%%%%%%%%%%%%%%%%%%%%%%%%%%%%%%%%%%%%%%%%%

%%%%%%%%%%%%%%%%%%%%%%%%%%%%%%%%%%%%%%%%%%%%%%%%%%%%%%%
\vskip10pt
\begin{teo}
\label{teo:main3}
If \,$G$\, is a nilpotent subgroup of \,$\mathrm{Homeo}({\mathbb T}^{2})$ then$\,:$
\begin{itemize}
\item[$(i)$]
$G':=[\,G,G\,]$\, is irrotational when \,$G \subset \mathrm{Homeo}_{0}({\mathbb T}^{2})\,;$

\item[$(ii)$]
$G'':=[\,G',G'\,]$\,  is irrotational when  \,$G \subset \mathrm{Homeo}_{+}({\mathbb T}^{2})\,;$
\item[$(iii)$]
$G''':=[\,G'',G''\,]$\, is always irrotational.

\end{itemize}

\end{teo}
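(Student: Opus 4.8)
The plan is to establish $(i)$ directly and then deduce $(ii)$ and $(iii)$ from it by controlling the action on homology. Write $L\colon \mathrm{Homeo}(\mathbb{T}^{2})\to \mathrm{GL}_{2}(\mathbb{Z})$ for the homomorphism induced on $H_{1}(\mathbb{T}^{2};\mathbb{Z})\cong\mathbb{Z}^{2}$; its kernel is exactly $\mathrm{Homeo}_{0}(\mathbb{T}^{2})$ and $L^{-1}(\mathrm{SL}_{2}(\mathbb{Z}))=\mathrm{Homeo}_{+}(\mathbb{T}^{2})$. Since $G$ is nilpotent, so is $L(G)$. I would first record two purely algebraic facts: a nilpotent subgroup of $\mathrm{SL}_{2}(\mathbb{Z})$ is abelian, and a nilpotent subgroup of $\mathrm{GL}_{2}(\mathbb{Z})$ is metabelian. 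Both follow because $\mathrm{GL}_{2}(\mathbb{Z})$ is virtually free, so such a subgroup is virtually cyclic; its torsion part is cyclic inside $\mathrm{SL}_{2}(\mathbb{Z})$ and a short case analysis shows the conjugation action of an infinite order element on it must be trivial, forcing commutativity, while in $\mathrm{GL}_{2}(\mathbb{Z})$ the index two subgroup lying in $\mathrm{SL}_{2}(\mathbb{Z})$ is already abelian, whence metabelianity. Granting this, in case $(ii)$ one has $L(G)'=1$, so $G'=[G,G]\subseteq\ker L=\mathrm{Homeo}_{0}(\mathbb{T}^{2})$, and in case $(iii)$ one has $L(G)''=1$, so $G''\subseteq\mathrm{Homeo}_{0}(\mathbb{T}^{2})$. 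In either case the relevant subgroup is a nilpotent subgroup of $\mathrm{Homeo}_{0}(\mathbb{T}^{2})$, so $(ii)$ and $(iii)$ reduce to applying $(i)$ to $G'$ and to $G''$ respectively.

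For $(i)$, fix $\Phi\in G'$; as $\Phi$ is a product of finitely many commutators I may assume $G$ is finitely generated, say by $S\subseteq\mathrm{Homeo}_{0}(\mathbb{T}^{2})$. Lifts of elements of $\mathrm{Homeo}_{0}(\mathbb{T}^{2})$ form a group in which every displacement $\tilde g-\mathrm{Id}$ is $\mathbb{Z}^{2}$-periodic, hence bounded and well defined on $\mathbb{T}^{2}$; thus $\rho_{\mu}(\tilde g)=\int_{\mathbb{T}^{2}}(\tilde g-\mathrm{Id})\,d\mu$ and $|\rho_{\mu}(\tilde g)|\le\|\tilde g-\mathrm{Id}\|_{\infty}$ for any probability measure $\mu$. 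Because $G$ is nilpotent it is amenable, so it admits an invariant Borel probability measure $\nu$ on $\mathbb{T}^{2}$. The invariance of $\nu$ makes $\tilde g\mapsto\rho_{\nu}(\tilde g)$ a homomorphism into $\mathbb{R}^{2}$: the identity $\tilde g\tilde h-\mathrm{Id}=(\tilde g-\mathrm{Id})\circ\tilde h+(\tilde h-\mathrm{Id})$ integrates, using $\nu$-invariance, to $\rho_{\nu}(\tilde g\tilde h)=\rho_{\nu}(\tilde g)+\rho_{\nu}(\tilde h)$. Hence $\rho_{\nu}$ kills commutators, so the lift $\tilde\Phi$ of $\Phi$ obtained as the corresponding product of commutators $[\tilde a_{i},\tilde b_{i}]$ satisfies $\rho_{\nu}(\tilde\Phi)=0$. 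This $\tilde\Phi$ is my candidate for the irrotational lift.

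It remains to prove $\rho_{\mu}(\tilde\Phi)=0$ for every $\Phi$-invariant $\mu$, i.e. that $\rho(\tilde\Phi)=\{0\}$. Here I would use that elements of the commutator subgroup of a finitely generated nilpotent group are distortion elements, so the word length satisfies $\|\Phi^{N}\|_{S}=o(N)$ (indeed $O(\sqrt N)$, as seen from $[a,b]^{n^{2}}=[a^{n},b^{n}]$ and its higher class analogues). Writing $\Phi^{N}$ as a word of length $\ell_{N}=o(N)$ in $S$ and lifting it through the chosen lifts of the generators produces a lift $\hat\Phi_{N}$ of $\Phi^{N}$ with $\|\hat\Phi_{N}-\mathrm{Id}\|_{\infty}\le\ell_{N}\max_{s\in S}\|\tilde s^{\pm1}-\mathrm{Id}\|_{\infty}=o(N)$. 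Since $\hat\Phi_{N}$ and $\tilde\Phi^{N}$ both lift $\Phi^{N}$, we have $\hat\Phi_{N}=T_{v_{N}}\tilde\Phi^{N}$ for some $v_{N}\in\mathbb{Z}^{2}$. Evaluating $\rho_{\nu}$ and using $\rho_{\nu}(\tilde\Phi)=0$ gives $v_{N}=\rho_{\nu}(\hat\Phi_{N})$, so $|v_{N}|\le\|\hat\Phi_{N}-\mathrm{Id}\|_{\infty}=o(N)$. For an arbitrary $\Phi$-invariant $\mu$, the relation $\rho_{\mu}(\tilde\Phi^{N})=N\rho_{\mu}(\tilde\Phi)$ yields $\rho_{\mu}(\hat\Phi_{N})=v_{N}+N\rho_{\mu}(\tilde\Phi)$ with $|\rho_{\mu}(\hat\Phi_{N})|\le\|\hat\Phi_{N}-\mathrm{Id}\|_{\infty}=o(N)$. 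Combining the two estimates forces $|N\rho_{\mu}(\tilde\Phi)|\le|v_{N}|+o(N)=o(N)$, hence $\rho_{\mu}(\tilde\Phi)=0$. Therefore $\rho(\tilde\Phi)=\{0\}$ and $\Phi$ is irrotational, proving $(i)$.

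The hard part is exactly that irrotationality concerns the whole rotation set, i.e. all invariant measures simultaneously, whereas amenability only hands me the single $G$-invariant measure $\nu$; a typical $\Phi$-invariant ergodic measure need not be invariant under the generators, so the homomorphism $\rho_{\nu}$ by itself cannot detect it. The mechanism that overcomes this is the distortion estimate combined with the observation that $\nu$ is precisely what pins down the integer translation ambiguity $v_{N}$ (forcing $v_{N}=o(N)$), after which the displacement bound on the short-word lift $\hat\Phi_{N}$ disposes of every remaining measure uniformly. The secondary, more routine, point to check with care is the structure of nilpotent subgroups of $\mathrm{SL}_{2}(\mathbb{Z})$ and $\mathrm{GL}_{2}(\mathbb{Z})$, since it is exactly this that dictates the depth $G'$, $G''$, $G'''$ appearing in the three cases.
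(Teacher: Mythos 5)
Your proposal is correct and follows essentially the same route as the paper: distortion of commutator elements in a finitely generated nilpotent group, the homomorphism property of $\rho_{\nu}$ for a $G$-invariant measure $\nu$ supplied by amenability, and the structure of nilpotent subgroups of $\mathrm{GL}(2,{\mathbb Z})$ to reduce $(ii)$ and $(iii)$ to $(i)$. The only (harmless) difference is in the last step of $(i)$: the paper combines the uniform displacement estimate with Birkhoff's ergodic theorem to show that the rotation set of $\tilde{\Phi}$ is a single point and then identifies that point as $(0,0)$ via $\nu$, whereas you bound each $\rho_{\mu}\big(\tilde{\Phi}\big)$ directly by integrating the $o(N)$ displacement of the short-word lift of $\Phi^{N}$ against $\mu$, which bypasses the ergodic theorem altogether.
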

%%%%%%%%%%%%%%%%%%%%%%%%%%%%%%%%%%%%%%%%%%%%%%%%%%%%%%%%

The group \,$\mathrm{Homeo}_{+}({\mathbb T}^{2})$\, is the subgroup of
\,$\mathrm{Homeo}({\mathbb T}^{2})$\, formed by the orientation-preserving elements.
%Moreover, by definition $G'$ is equal to $[G,G]$
%(cf. section \ref{sec:pre}), \,$G''=(G')'$\, and \,$G'''=(G'')'$\,.
Theorems \ref{teo:main3} and \ref{teo:main2}   provide immediately the following result.

%%%%%%%%%%%%%%%%%%%%%%%%%%%%%%%%%%%%%%%%%%%%%%%%%%%%%%%%
%\vskip10pt
%\begin{teo}
%\label{teo:main}
%Let \,$G$\, be a nilpotent subgroup of \,$\mathrm{Diff}_{0}^{1}({\mathbb T}^{2})$. Then \,$G'$\,
%has a global fixed point. Changing \,$\mathrm{Diff}_{0}^{1}({\mathbb T}^{2})$\, by
%\,$\mathrm{Diff}_{+}^{1}({\mathbb T}^{2})$\,
%$\big($resp. $\mathrm{Diff}^{1}({\mathbb T}^{2})\big)$ we have that
%\,$G''$\, $\big($resp. \,$G'''$\,$\big)$ has a global fixed point.
%
%\end{teo}
%%%%%%%%%%%%%%%%%%%%%%%%%%%%%%%%%%%%%%%%%%%%%%%%%%%%%%%%

%%%%%%%%%%%%%%%%%%%%%%%%%%%%%%%%%%%%%%%%%%%%%%%%%%%%%%%
\vskip10pt
\begin{teo}
\label{teo:main}
Let \,$G$\, be a nilpotent subgroup of \,$\mathrm{Diff}^{1}({\mathbb T}^{2})$. The following subgroups of \,$G$\, have a global fixed point$\,:$
\begin{itemize}
\item[$(i)$]
$G'$\, when \,$G \subset \mathrm{Diff}^{1}_{0}({\mathbb T}^{2})\,;$

\item[$(ii)$]
$G''$\, when \,$G \subset \mathrm{Diff}^{1}_{+}({\mathbb T}^{2})\,;$
\item[$(iii)$]
$G'''$.

\end{itemize}

\end{teo}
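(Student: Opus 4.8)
The plan is to prove the three items uniformly, deducing each one from the matching part of Theorem \ref{teo:main3} together with Theorem \ref{teo:main2}; the numbering of the derived subgroups $G'$, $G''$, $G'''$ is chosen precisely so that the hypotheses line up, so no genuinely new argument should be required beyond checking that they do. First I would record the standard fact that a subgroup of a nilpotent group is again nilpotent, so that $G'$, $G''$ and $G'''$ are nilpotent subgroups of $G$ in the respective cases.

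The key step is to place the relevant derived subgroup inside $\mathrm{Diff}^{1}_{0}({\mathbb T}^{2})$, which is the ambient group demanded by Theorem \ref{teo:main2}. Here I would exploit that the word \emph{irrotational} is defined only for elements of $\mathrm{Homeo}_{0}({\mathbb T}^{2})$, so asserting that a subgroup is irrotational already entails that it is contained in $\mathrm{Homeo}_{0}({\mathbb T}^{2})$. Concretely, in case $(i)$ the inclusion $G \subset \mathrm{Diff}^{1}_{0}({\mathbb T}^{2}) \subset \mathrm{Homeo}_{0}({\mathbb T}^{2})$ lets me invoke Theorem \ref{teo:main3}$(i)$ to conclude that $G'$ is irrotational, whence $G' \subset \mathrm{Homeo}_{0}({\mathbb T}^{2})$; since $G'$ is a subgroup of $G \subset \mathrm{Diff}^{1}({\mathbb T}^{2})$ it consists of $C^{1}$ diffeomorphisms, and a $C^{1}$ diffeomorphism lying in $\mathrm{Homeo}_{0}({\mathbb T}^{2})$ belongs to $\mathrm{Diff}^{1}_{0}({\mathbb T}^{2})$ by definition. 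The very same template handles $(ii)$ via $G \subset \mathrm{Diff}^{1}_{+}({\mathbb T}^{2}) \subset \mathrm{Homeo}_{+}({\mathbb T}^{2})$ and Theorem \ref{teo:main3}$(ii)$ applied to $G''$, and $(iii)$ via $G \subset \mathrm{Diff}^{1}({\mathbb T}^{2}) \subset \mathrm{Homeo}({\mathbb T}^{2})$ and Theorem \ref{teo:main3}$(iii)$ applied to $G'''$.

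Having exhibited the appropriate derived subgroup as an irrotational nilpotent subgroup of $\mathrm{Diff}^{1}_{0}({\mathbb T}^{2})$, I would conclude each case by a direct application of Theorem \ref{teo:main2}, which furnishes the desired global fixed point. I do not anticipate a serious obstacle at this final stage: all of the dynamical and analytic substance is already carried by Theorems \ref{teo:main2} and \ref{teo:main3}, and what is left is essentially bookkeeping. The only point that demands care is the compatibility of conventions just described, namely that irrotationality forces membership in $\mathrm{Homeo}_{0}({\mathbb T}^{2})$ and that this, intersected with the $C^{1}$ hypothesis, yields exactly the group $\mathrm{Diff}^{1}_{0}({\mathbb T}^{2})$ to which Theorem \ref{teo:main2} applies.
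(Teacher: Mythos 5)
Your proposal is correct and follows exactly the paper's own route: apply Theorem \ref{teo:main3} to see that the relevant derived subgroup is irrotational (hence lands in $\mathrm{Diff}^{1}_{0}({\mathbb T}^{2})$) and then invoke Theorem \ref{teo:main2}. The paper's proof is just a two-line version of the same argument, so the extra bookkeeping you spell out is consistent with, and slightly more explicit than, what the authors wrote.
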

%%%%%%%%%%%%%%%%%%%%%%%%%%%%%%%%%%%%%%%%%%%%%%%%%%%%%%%%

Given a subgroup \,$G \subset \mathrm{Homeo}({\mathbb T}^{2})$\, we always denote by \,$G_{0}$\,
the subgroup  of isotopic to the identity elements of \,$G$\, and we
define the sets\,:
\[ {\mathcal I}_{G} := \big\{ \phi \in G_{0} \ \ ; \ \exists \ \ \text{a lift} \
\ \tilde{\phi} \ \ \text{s.t.} \ \ \rho_{\mu}(\tilde{\phi})=(0\,,0) \ \
\text{for all} \ \  \mu \in {\mathcal P} (\phi) \big\} \]
and
\[ G_{\mathcal I} := \big\{ \phi \in G_{0} \ \ ; \ \ \exists \ \ \text{a lift} \ \
\tilde{\phi} \ \ \text{s.t.} \ \ \rho_{\mu}(\tilde{\phi})=(0\,,0) \ \
\text{for all} \ \ \mu \in {\mathcal P} (G) \big\} \]
where \,${\mathcal P}(G) = \cap_{\psi \in G} \, {\mathcal P}(\psi)$\,.
Of course we are following our convention that \,$\tilde{\phi}$\, is a lift of
\,$\phi \in \text{Homeo}(\mathbb{T}^{2})$\, to the universal covering.
The set \,${\mathcal I}_{G}$\, consists of the irrotational elements of \,$G$\,
whereas \,$G_{\mathcal I}$\, is the set of elements of \,$G$\, that are irrotational with
respect to \,$G$-invariant measures. It is clear that \,${\mathcal I}_{G}$\,
is contained in \,$G_{\mathcal I}$.
Moreover, \,$G_{\mathcal{I}}$\, is a normal subgroup of
\,$G$\, by Lemmas \ref{rem:pact} and \ref{lem:mor}.

\vglue10pt

Given a surface \,$S$\, we fix the notations \,$\mathrm{Homeo}(S)$\,,
\,$\mathrm{Homeo}_{+}(S)$\, and \,$\mathrm{Homeo}_{0}(S)$\, with similar
meanings as we have done previously,
for the case \,$S=\mathbb{T}^{2}$\, and \,$S=\mathbb{R}^{2}$.
We do the same with the notations \,$\mathrm{Diff}^{1}(S)$\,,
\,$\mathrm{Diff}^{1}_{+}(S)$\, and \,$\mathrm{Diff}^{1}_{0}(S)$\, for sets of \,$C^{1}$-difeomorphisms
of \,$S$.

Let \,$G$\, be a subgroup of \,$\mathrm{Homeo}(S)$\, and let \,$\tau:\tilde{S}\rightarrow S$\, a covering map.  We say that
a subgroup \,$G_{\tau}$\, of  \,$\mathrm{Homeo}(\tilde{S})$\,
is a \,{\it \,$\tau$-lift}\, of \,$G$\,  if each element \,$\phi_{\tau} \in G_{\tau}$\,
is a \,$\tau$-lift of  some element \,$\phi \in G$\, and
the  natural projection \,$\kappa: G_{\tau} \to G$\, defined by \,$\kappa (\phi_{\tau})=\phi$\, is an
isomorphism of groups.

\vskip10pt

In the study of
nilpotent groups of homeomorphisms  of the $2$-torus, the
irrotational  elements  play a similar role   as the isotopic to the
identity   homeomorphisms    in closed orientable surfaces
\,$S$\, with negative Euler characteristic.
In both cases there exists a privileged lift, namely the irrotational lift
and the \,{\it identity lift}\,,  respectively, for irrotational and isotopic to the identity
homeomorphisms.

Let us remind the reader that
the negative Euler characteristic hypothesis on \,$S$\, guarantees that
\,$\mathrm{Homeo}_{0}(S)$\, is homotopically trivial (cf. \cite{meh01})
and we have that the \,{\it identity lift}\,
\,$\tilde{\phi}$\, of \,$\phi \in \mathrm{Homeo}_{0}(S)$\,
%with \,$\chi (S) < 0$\,
is the unique lift
of \,$\phi$\, to the universal covering of \,$S$\, that commutes with all covering
transformations. It is obtained by lifting, to the universal covering
\,$\tilde{S}$\, of \,$S$\,, any isotopy from \,${Id}_{S}$\, to \,$\phi$\, starting at
\,${Id}_{\tilde{S}}$\,.  The set \,$\tilde{\mathcal{I}}_{\text{Homeo}_{0}(S)}$\, of identity lifts of all elements in \,$\mathrm{Homeo}_{0}(S)$\,
is a normal subgroup of the group of all lifts of elements of \,$\mathrm{Homeo}(S)$.
Analogously the set
\,$\tilde{\mathcal I}_{\mathrm{Homeo}_{0}({\mathbb T}^{2})}$\,, of irrotational lifts
of elements of \,${\mathcal I}_{\mathrm{Homeo}_{0}({\mathbb T}^{2})}$\,,
is not a group but it is a \,{\it normal subset}\, of the group of all lifts of elements of
\,$\mathrm{Homeo}(\mathbb{T}^{2})$\, in the following sense\,:
\,$\tilde{\psi} \circ \tilde{\phi} \circ \tilde{\psi}^{-1} \in
\tilde{\mathcal I}_{\mathrm{Homeo}_{0}({\mathbb T}^{2})}$\,
if \,$\tilde{\phi} \in \tilde{\mathcal I}_{\mathrm{Homeo}_{0}({\mathbb T}^{2})}$\, and \,$\tilde{\psi}$\, is a lift
of any \,$\psi \in \mathrm{Homeo}({\mathbb T}^{2})$\,
to the universal covering of \,$\mathbb{T}^{2}$. This property is
a consequence of Lemma \ref{rem:pact}. Similarly, the set \,${\mathcal I}_{\mathrm{Homeo}_{0}({\mathbb T}^{2})}$\, has the same property.

Anyway we prove that even if \,${\mathcal I}_{G}$\, is not a group for a
general subgroup \,$G$\, of the group \,$\mathrm{Homeo}({\mathbb T}^{2})$, it is always
a normal subgroup of \,$G$\, when \,$G$\, is a nilpotent
subgroup of \,$\mathrm{Homeo}(\mathbb{T}^{2})$.
A natural strategy in order to show this result is
to compare \,${\mathcal I}_{G}$\, with \,$G_{\mathcal I}$.
More precisely, we compare
the set of rotation vectors of a
lift \,$\tilde{\phi}$\, of \,$\phi \in G_{0}$\, with the set
\,$\{ \rho_{\mu}(\tilde\phi) \ \, ; \  \mu \in {\mathcal P}(G)\}$\, of rotation vectors
with respect to $G$-invariant measures.
In such context we prove the following result.

%%%%%%%%%%%%%%%%%%%%%%%%%%%%%%%%%%%%%%%%%%%%%%%%%%%%%%%%
\vskip10pt
\begin{teo}
\label{teo:main6}
Let \,$G$\, be a nilpotent subgroup
of \,$\mathrm{Homeo}({\mathbb T}^{2})$.
Then there exists a finite index normal subgroup \,$H$\, of
\,$G$\, containing \,$G_{0}$\, such that
\[
\big\{ \rho_{\mu}\big(\tilde{\phi}\big) \ \ ; \  \mu \in {\mathcal P}(\phi)\big\} =
\big\{ \rho_{\nu}\big(\tilde{\phi}\big) \ \ ; \  \nu \in
  {\mathcal P}(H)\big\}   \]
%%%%
for any lift \,$\tilde{\phi}$\, of every  \,$\phi \in G_{0}$\,.
In particular, the set \,${\mathcal I}_{G}$\,
of irrotational elements of \,$G$\, is a normal subgroup of \,$G$.
Moreover, if \,$G=G_{0}$\,  then
\,${\mathcal I}_{G}=G_{\mathcal I}$.
\end{teo}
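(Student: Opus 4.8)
The plan is to prove first the displayed equality of rotation‑vector sets (call it the \emph{rotation equality}); the two remaining assertions then follow formally. Write $\Theta\colon G\to \mathrm{GL}(2,\mathbb{Z})$ for the action on $H_{1}(\mathbb{T}^{2})$, so that $\ker\Theta=G_{0}$. Fix $\phi\in G_{0}$ together with a lift $\tilde\phi$, and abbreviate by $\rho(\tilde\phi)$ its compact convex rotation set. Since $G_{0}\subset H$, every $\nu\in\mathcal{P}(H)$ is $\phi$-invariant, whence $\{\rho_{\nu}(\tilde\phi)\,;\,\nu\in\mathcal{P}(H)\}\subset\rho(\tilde\phi)$; this is the trivial inclusion. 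As $H$ is nilpotent, hence amenable, $\mathcal{P}(H)$ is nonempty, and the left–hand set is the image of the convex weak-$*$ compact set $\mathcal{P}(H)$ under the affine continuous map $\nu\mapsto\rho_{\nu}(\tilde\phi)$, so it is convex and compact. By Krein--Milman in $\mathbb{R}^{2}$ it therefore suffices to show that every extreme point of $\rho(\tilde\phi)$ is realized by some $H$-invariant measure. By the theory of rotation sets \cite{mi03}, each extreme point $v$ of $\rho(\tilde\phi)$ equals $\rho_{\mu}(\tilde\phi)$ for some $\phi$-ergodic measure $\mu$. (The argument will be uniform in $\tilde\phi$, since replacing $\tilde\phi$ by $T_{w}\circ\tilde\phi$ shifts both sides by $w$.)

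To choose $H$, observe that $\Theta(G)$ is a nilpotent subgroup of $\mathrm{GL}(2,\mathbb{Z})$. Since $\mathrm{GL}(2,\mathbb{Z})$ is virtually free, $\Theta(G)$ is virtually cyclic, so it contains a characteristic subgroup $N_{0}$ of finite index that is either trivial or infinite cyclic; passing to a power of a generator we may assume that in the infinite cyclic case $N_{0}$ is generated by a single parabolic or hyperbolic matrix with rational invariant lines. Put $H:=\Theta^{-1}(N_{0})$. Then $H$ is a finite–index normal subgroup of $G$ containing $G_{0}$, as required.

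It remains to realize each extreme $v=\rho_{\mu}(\tilde\phi)$ by an $H$-invariant measure. Let $F_{v}:=\{\lambda\in\mathcal{P}(\phi)\,;\,\rho_{\lambda}(\tilde\phi)=v\}$, a nonempty compact convex set containing $\mu$. For $c$ in the centralizer $C_{G_{0}}(\phi)$ one has $\Theta(c)=\mathrm{Id}$ and $\tilde c\,\tilde\phi\,\tilde c^{-1}=T_{w_{c}}\circ\tilde\phi$ for some $w_{c}\in\mathbb{Z}^{2}$; since the abelian, hence amenable, group $\langle c,\phi\rangle$ admits a common invariant measure $\nu$, the transformation rule of Lemma \ref{rem:pact} gives $\rho_{c_{*}\nu}(\tilde\phi)=\rho_{\nu}(\tilde\phi)-w_{c}$, and $c_{*}\nu=\nu$ forces $w_{c}=0$. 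Consequently push-forward by $C_{G_{0}}(\phi)$ preserves $F_{v}$, and as $C_{G_{0}}(\phi)$ is nilpotent, the fixed–point property of amenable groups for affine actions on compact convex sets yields a $C_{G_{0}}(\phi)$-invariant measure $\mu_{0}\in F_{v}$. The core difficulty is to promote $\mu_{0}$ to a measure invariant under all of $H$ without leaving $F_{v}$: for an element $h$ that does not centralize $\phi$ the discrepancy is governed by the commutator $[h,\phi]$, which lies deeper in the lower central series of $G$. I would handle this by induction on the nilpotency class, using the inductive control of the rotation behaviour of these deeper commutators to show that a suitable amenable average over $H$ of $\mu_{0}$ stays in $F_{v}$ while becoming $H$-invariant. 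I expect this extension from the centralizer to the whole of $H$ to be the main obstacle.

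Granting the rotation equality, the two final assertions are formal. For normality of $\mathcal{I}_{G}$, conjugation invariance is immediate from Lemma \ref{rem:pact}: if $\tilde\phi$ has rotation set $\{(0\,,0)\}$ then $\tilde g\,\tilde\phi\,\tilde g^{-1}$, a lift of $g\phi g^{-1}$, has rotation set $\Theta(g)\{(0\,,0)\}=\{(0\,,0)\}$. For closure under products, take $\phi_{1},\phi_{2}\in\mathcal{I}_{G}$; by the rotation equality we may pick lifts with $\rho_{\nu}(\tilde\phi_{i})=(0\,,0)$ for every $\nu\in\mathcal{P}(H)$, and since $\phi_{1},\phi_{2}\in G_{0}\subset H$ each such $\nu$ is invariant under both factors, so the morphism property of Lemma \ref{lem:mor} gives $\rho_{\nu}(\tilde\phi_{1}\tilde\phi_{2})=(0\,,0)$ for all $\nu\in\mathcal{P}(H)$; applying the rotation equality once more to $\phi_{1}\phi_{2}$ shows its rotation set is $\{(0\,,0)\}$, whence $\phi_{1}\phi_{2}\in\mathcal{I}_{G}$ (inverses are treated the same way). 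Thus $\mathcal{I}_{G}$ is a normal subgroup of $G$. Finally, if $G=G_{0}$ then $H=G$, so $\mathcal{P}(H)=\mathcal{P}(G)$ and the rotation equality reads $\rho(\tilde\phi)=\{\rho_{\nu}(\tilde\phi)\,;\,\nu\in\mathcal{P}(G)\}$; comparing this with the definitions of $\mathcal{I}_{G}$ and $G_{\mathcal{I}}$ yields $\mathcal{I}_{G}=G_{\mathcal{I}}$.
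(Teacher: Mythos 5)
Your reduction to extreme points, your treatment of the centralizer $C_{G_{0}}(\phi)$, and the two formal deductions at the end (normality of ${\mathcal I}_{G}$ and ${\mathcal I}_{G}=G_{\mathcal I}$ when $G=G_{0}$) are all sound and consistent with what the paper does. But the proposal stops exactly where the theorem's content begins: you write that promoting $\mu_{0}$ from $C_{G_{0}}(\phi)$-invariance to $H$-invariance inside $F_{v}$ is ``the main obstacle'' and that you ``would handle it by induction on the nilpotency class.'' That step is the entire proof. Even within $G_{0}$ it is not routine: for $\psi\in G_{0}$ not centralizing $\phi$ one gets $\rho_{\psi_{*}\nu}(\tilde\phi)=\rho_{\nu}(\tilde\phi)+\rho_{\nu}\big(\big[\tilde\psi^{-1},\tilde\phi\big]\big)$, and killing the commutator term requires knowing that $G'$ is irrotational with one-point rotation set (Theorem \ref{teo:main3}, itself proved via the sublinear distortion estimate \eqref{equ:uniform}); the paper then runs an induction along the upper central series $Z^{(j)}(G)$, not on the nilpotency class of the ambient group, applying the amenable fixed-point property at each stage to the set $C_{j}$.

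The step from $G_{0}$ to $H$ is harder still and is where your sketch would actually fail as stated. For a generator $g$ of $H$ modulo $G_{0}$ the recursion of Lemma \ref{lem:rotev} gives $\rho_{(g^{q})_{*}\mu}(\tilde h)=[g]^{q}\rho_{\mu}(\tilde h)+\sum_{k}[g]^{k}\rho_{\mu}([\tilde g^{-1},\tilde h])$, and one can only conclude that the Birkhoff averages $\nu_{j,\ell}$ preserve rotation vectors by playing the boundedness of rotation sets against the unboundedness of the orbits of $[g]^{q}$ on nonzero vectors --- which is precisely why the paper imposes Condition $(*)$ (no nontrivial root of unity in $\mathrm{spec}[g]$) and why $H$ must be chosen via Remark \ref{rem:big} to exclude cases like $[G]=\langle -D\rangle$. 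Your choice of $H$ (a finite-index cyclic $N_{0}$ with parabolic or hyperbolic generator) happens to be compatible with this, but you never verify or invoke the spectral condition, and without the Dehn-twist and Anosov case analysis of Lemma \ref{lem:rinv} (which in the Anosov case additionally needs Proposition \ref{pro:bffi} to reduce to integer rotation vectors) there is no argument that the averaging stays in $F_{v}$. Incidentally, the parenthetical ``hyperbolic matrix with rational invariant lines'' is false --- hyperbolic elements of $\mathrm{GL}(2,\mathbb{Z})$ have irrational eigendirections --- though nothing in your argument actually uses it. As it stands the proposal is a correct plan for the easy parts plus an accurate diagnosis of the hard part, not a proof.
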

%%%%%%%%%%%%%%%%%%%%%%%%%%%%%%%%%%%%%%%%%%%%%%%%%%%%%%%%

It is possible to choose the group $H$ such that
$G/H$ is equal to a subgroup of $C_{6}$ or a subgroup of $D_{4}$
where $C_{n}$ is the cyclic group with $n$ elements and $D_{m}$ is the
dihedral group with $2m$ elements.

 From Theorems \ref{teo:main6}\,, \ref{teo:main2} and \ref{teo:main3} we easily conclude the following corollary.

%%%%%%%%%%%%%%%%%%%%%%%%%%%%%%%%%%
\vskip10pt
\begin{cor}%\label{}
 Let \,$G$\, be a nilpotent subgroup of \,$\mathrm{Diff}^{1}_{0}(\mathbb{T}^{2})$.
The set of irrotational elements of \,$G$\, is a normal subgroup of \,$G$\,
that contains \,$G'$\, and has a global fixed point.
\end{cor}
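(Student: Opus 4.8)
The plan is to deduce the corollary formally from the three main theorems, each supplying one of the three asserted properties. First I would observe the trivial but necessary reduction: since \,$G \subset \mathrm{Diff}^{1}_{0}(\mathbb{T}^{2})$\,, every element of \,$G$\, is isotopic to the identity, so \,$G=G_{0}$\, and the set \,${\mathcal I}_{G}$\, coincides with the set of irrotational elements of \,$G$\, in the sense of the definition preceding Theorem \ref{teo:main6}.

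Next I would invoke the three theorems in turn. Because \,$G$\, is nilpotent, Theorem \ref{teo:main6} tells us that \,${\mathcal I}_{G}$\, is a normal subgroup of \,$G$\,; this is the decisive structural input, since it promotes the \emph{a priori} merely set-theoretic \,${\mathcal I}_{G}$\, to a genuine group. As a subgroup of the nilpotent group \,$G$\,, the group \,${\mathcal I}_{G}$\, is itself nilpotent, and it inherits from \,$G$\, the property of being a subgroup of \,$\mathrm{Diff}^{1}_{0}(\mathbb{T}^{2})$. For the containment of the derived group I would use that \,$\mathrm{Diff}^{1}_{0}(\mathbb{T}^{2}) \subset \mathrm{Homeo}_{0}(\mathbb{T}^{2})$\,, whence Theorem \ref{teo:main3}$(i)$ gives that \,$G'=[\,G,G\,]$\, is irrotational; thus every element of \,$G'$\, is irrotational and so lies in \,${\mathcal I}_{G}$\,, i.e. \,$G' \subset {\mathcal I}_{G}$. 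Finally, \,${\mathcal I}_{G}$\, is, by its very definition, an irrotational nilpotent subgroup of \,$\mathrm{Diff}^{1}_{0}(\mathbb{T}^{2})$\,, so Theorem \ref{teo:main2} yields a global fixed point for \,${\mathcal I}_{G}$.

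The main point to be careful about—rather than a genuine obstacle—is the logical dependence: one cannot apply Theorem \ref{teo:main2} directly to the set of irrotational elements unless that set is known to be a group, and this is precisely the content of Theorem \ref{teo:main6}. Once the normality (and hence the group structure and nilpotency) of \,${\mathcal I}_{G}$\, is granted, the remaining two assertions are immediate, and the corollary follows without further computation.
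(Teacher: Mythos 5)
Your proposal is correct and follows exactly the route the paper intends: the paper derives this corollary directly from Theorems \ref{teo:main6}, \ref{teo:main3} and \ref{teo:main2}, with Theorem \ref{teo:main6} supplying the group structure and normality of \,${\mathcal I}_{G}$\,, Theorem \ref{teo:main3}$(i)$ the containment \,$G'\subset {\mathcal I}_{G}$\,, and Theorem \ref{teo:main2} the global fixed point. Your remark on the logical order --- that one must first know \,${\mathcal I}_{G}$\, is a group before applying Theorem \ref{teo:main2} --- is precisely the right point of care.
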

%\vskip5pt
%%%%%%%%%%%%%%%%%%%%%%%%%%%%%%%%%%

Other results comparing the rotational properties of the elements of a
nilpotent subgroup of $\mathrm{Homeo}({\mathbb T}^{2})$ with the rotational
properties with respect to measures in ${\mathcal P}(G)$ are
introduced in section \ref{subsec:consinv} (cf. Propositions \ref{pro:bigsma}, \ref{pro:bigsma2} and
\ref{pro:bigsma3}).

\vskip10pt

\vskip30pt
\section{Preliminaries}
\label{sec:pre}
%\vskip10pt
%%%%%%%%%%%%%%%%%%%%%%%%%%%%%%%%%%%%%%%%%%%%%%

Let \,$G$\, be a group and \,$H$\, be a subgroup of \,$G$. We denote by \,$[\,H,G\,]$\, the subgroup of
\,$G$\, generated by the elements of the form \,$[\,h\,,g\,]=hgh^{-1}g^{-1}$\, where
\,$h\in H$\, and \,$g\in G$. If \,$H$\, is a normal subgroup of \,$G$\, then \,$[\,H,G\,]$\,
is a  subgroup of \,$H$\, which is normal in \,$G$.

Given a group \,$G$\,  let us consider  the \,{\it upper central series}\,
\,$\{Z^{(n)}(G)\}_{n\geq0}$\, of \,$G$\,
$$Z^{(n+1)}(G):= \big\{g\in G \ \,; \ [\,g\,,f\,]\in Z^{(n)}(G) \ \ \text{for all} \ \ f\in G \big\}$$
where \,$Z^{(0)}(G)$\, is the trivial subgroup of \,$G$.
The members of the upper central series are characteristic subgroups of \,$G$. In particular they are normal
subgroups of \,$G$\, and we have
$$Z^{(0)}(G)\subset Z^{(1)}(G)\subset \cdots \subset Z^{(n)}(G) \subset \cdots \subset G .$$
If \,$Z^{(n)}(G)=G$\, for some \,$n\in \mathbb{Z}_{\geq0}$\, we say that \,$G$\, is a
\,{\it nilpotent group}. The smallest \,$n \in \mathbb{Z}_{\geq0}$\,
 such that \,$Z^{(n)}(G)=G$\, is the \,{\it nilpotency class}\, of \,$G$.

\vskip5pt

 The focus of this paper is the study of nilpotent groups of
homeomorphisms and diffeomorphisms of the $2$-torus.

%Let \,$G$\, be a subgroup of \,$\mathrm{Homeo}(S)$. Consider a $G$-invariant connected open subset
% \,$U$\, of \,$S$\, and a covering map \,$\sigma: \tilde U \to U$. We say that
%a subgroup \,$\tilde{G}$\, of  \,$\mathrm{Homeo}(\tilde{U})$\,
%is a \,{\it lift}\, of \,$G$\,  if any element \,$\tilde{\phi}$\, of \,$\tilde{G}$\,
%is a lift of the restriction to \,$U$\, of some element \,$\phi$\, of \,$G$\, and
%the  natural projection \,$\kappa: \tilde{G} \to G$\, defined by \,$\kappa\big(\tilde{\phi}\big)=\phi$\, is an
%isomorphism of groups.
%We understand that a subset \,$U$\, of \,$S$\, is $G$-invariant when
%\,$g(U)=U$\, for all \,$g\in G$.
%\end{defi}

%Let us remark that the definition of lift for groups is more restrictive than the definition for single
%homeomorphisms.
%The translation \,$T_{(0,1)}$\, is a lift of the identity map
%of \,${\mathbb T}^{2}$\, but the group \,$\langle T_{(0,1)} \rangle$\, is not a lift of
%the group \,$\{Id\}$.
%Most of the time in this paper the covering \,$\sigma: \tilde{U} \to U$\,
%will be  the universal covering
%\,$\pi:{\mathbb R}^{2} \to {\mathbb T}^{2}$\, of the torus.

%%%%%%%%%%%%%%%%%%%%%%%%%%%%%%%%%%%%%%%%%%%%%%%%%%%%%%%%%%%
%%%%%%%%%%%%%%%%%%%%%%%%%%%%%%%%%%%%%%%%%%%%%%%%%%%%%%%%%%%

\vskip10pt
\begin{defi}
\label{def:imcg}
Let \,$[\,]: \mathrm{Homeo}({\mathbb T}^{2}) \to \mathrm{MCG}({\mathbb T}^{2})=
\mathrm{GL}(2\,,{\mathbb Z})$\,
be the map associating to an element
of \,$\mathrm{Homeo}({\mathbb T}^{2})$\, its image in the mapping class group  of
\,$\mathbb{T}^{2}$.
Given a subgroup \,$G$\, of \,$\mathrm{Homeo}({\mathbb T}^{2})$\, we denote by \,$[G]$\,
the image of \,$G$\, by $[\,]$.
%Moreover, the subgroup \,$G_{0}$\, of \,$G$\, is given by
%\,$\mathrm{Ker}[\,] \cap G$\,.
\end{defi}
\vskip5pt

%%%%%%%%%%%%%%%%%%%%%%%%%%%%%%%%%%%%%%%%%%%%%%%%%%%%%%%%%%%
%%%%%%%%%%%%%%%%%%%%%%%%%%%%%%%%%%%%%%%%%%%%%%%%%%%%%%%%%%%

In what follows we will be using frequently the next two  well known lemmas.

\begin{lem}
\label{rem:pact}
Let \,$\phi \in \mathrm{Homeo}_{0}({\mathbb T}^{2})$\,, \,$\mu\in\mathcal{P}(\phi)$\, and
\,$\psi \in \mathrm{Homeo}({\mathbb T}^{2})$\,.
For all lifts to the universal convering \,$\tilde{\phi}$\, and \,$\tilde{\psi}$\,  of \,$\phi$\, and \,$\psi$\,  respectively,
we have$\,:$
\[\tilde{\psi} \circ T_{v}= T_{[\psi](v)} \circ \tilde{\psi} \quad \text{and} \quad
[\psi]\Big(\rho_{\mu}\big(\tilde{\phi}\big)\Big) =
\rho_{\nu}\big(\tilde{\psi} \circ \tilde{\phi} \circ \tilde{\psi}^{-1}\big)  \]
where \,$v\in\mathbb{Z}^{2}$\, and
 \,$\nu=\psi_{*}(\mu)$\,.
\end{lem}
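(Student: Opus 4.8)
The plan is to prove the two identities in turn: the first is purely topological, and the second rests on it together with the $\phi$-invariance of $\mu$. For the first identity, I would argue through deck transformations. Since $\pi \circ \big(\tilde{\psi} \circ T_{v} \circ \tilde{\psi}^{-1}\big) = \psi \circ \pi \circ T_{v} \circ \tilde{\psi}^{-1} = \psi \circ \pi \circ \tilde{\psi}^{-1} = \pi$, the map $\tilde{\psi} \circ T_{v} \circ \tilde{\psi}^{-1}$ descends to the identity of $\mathbb{T}^{2}$ and is therefore a covering transformation $T_{w(v)}$ with $w(v) \in \mathbb{Z}^{2}$. The assignment $v \mapsto w(v)$ is the automorphism of the deck group $\mathbb{Z}^{2}$ induced by $\psi$ on $\pi_{1}(\mathbb{T}^{2}) \cong \mathbb{Z}^{2}$, which by the identification $\mathrm{MCG}(\mathbb{T}^{2}) = \mathrm{GL}(2,\mathbb{Z})$ of Definition~\ref{def:imcg} is exactly $[\psi]$, giving $\tilde{\psi} \circ T_{v} = T_{[\psi](v)} \circ \tilde{\psi}$. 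Independence of the chosen lift is immediate: replacing $\tilde{\psi}$ by $T_{u} \circ \tilde{\psi}$ does not change the conjugate, since translations commute.

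For the second identity, set $\eta = \psi \circ \phi \circ \psi^{-1}$ and $\tilde{\eta} = \tilde{\psi} \circ \tilde{\phi} \circ \tilde{\psi}^{-1}$. I would first record that $[\eta] = [\psi][\phi][\psi]^{-1}$ is the identity because $[\phi]=\mathrm{Id}$ (as $\phi \in \mathrm{Homeo}_{0}(\mathbb{T}^{2})$), so $\eta \in \mathrm{Homeo}_{0}(\mathbb{T}^{2})$ and, using the first identity, $\tilde{\eta}$ commutes with every $T_{v}$. Hence both $\tilde{\eta} - \mathrm{Id}$ and $\tilde{\phi} - \mathrm{Id}$ are $\mathbb{Z}^{2}$-periodic and descend to continuous maps $D_{\eta}, D_{\phi} : \mathbb{T}^{2} \to \mathbb{R}^{2}$. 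Moreover $\nu = \psi_{*}(\mu)$ is $\eta$-invariant, since $\eta_{*}\nu = \psi_{*}\phi_{*}\mu = \psi_{*}\mu = \nu$, so $\rho_{\nu}(\tilde{\eta})$ is defined. The key device is the function $F(\tilde{z}) := \tilde{\psi}(\tilde{z}) - [\psi]\tilde{z}$; the first identity shows $F(\tilde{z}+v) = F(\tilde{z})$ for $v \in \mathbb{Z}^{2}$, so $F$ descends to a continuous, hence bounded and $\mu$-integrable, map $\bar{F} : \mathbb{T}^{2} \to \mathbb{R}^{2}$.

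The computation then proceeds as follows. For $\tilde{x} \in \mathbb{R}^{2}$ lying over $x = \pi(\tilde{x})$, writing $\tilde{\phi}(\tilde{x}) = \tilde{x} + D_{\phi}(x)$ and evaluating $D_{\eta}$ at the lift $\tilde{\psi}(\tilde{x})$ of $\psi(x)$, the substitution $\tilde{\psi}(\tilde{z}) = [\psi]\tilde{z} + F(\tilde{z})$ at $\tilde{z} = \tilde{\phi}(\tilde{x})$ and $\tilde{z} = \tilde{x}$ gives
\[
D_{\eta}(\psi(x)) = \tilde{\psi}\big(\tilde{\phi}(\tilde{x})\big) - \tilde{\psi}(\tilde{x}) = [\psi]\big(D_{\phi}(x)\big) + \bar{F}(\phi(x)) - \bar{F}(x),
\]
using $F(\tilde{\phi}(\tilde{x})) = \bar{F}(\phi(x))$. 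Integrating against $\mu$, and using $\int_{\mathbb{T}^{2}} D_{\eta}\, d\nu = \int_{\mathbb{T}^{2}} D_{\eta} \circ \psi \, d\mu$ for the pushforward, the linearity of $[\psi]$, and the $\phi$-invariance $\int_{\mathbb{T}^{2}} \bar{F} \circ \phi \, d\mu = \int_{\mathbb{T}^{2}} \bar{F} \, d\mu$, the coboundary term cancels and we obtain $\rho_{\nu}(\tilde{\eta}) = [\psi]\big(\rho_{\mu}(\tilde{\phi})\big)$. I expect the one genuinely non-formal step, and thus the main obstacle, to be this coboundary cancellation: it is exactly here that the invariance of $\mu$ under $\phi$ is indispensable, everything else being bookkeeping about lifts and the descent of periodic maps.
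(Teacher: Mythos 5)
Your proof is correct and complete: the deck-transformation argument for the first identity, the periodicity of $F(\tilde z)=\tilde\psi(\tilde z)-[\psi]\tilde z$, and the coboundary cancellation via $\phi$-invariance of $\mu$ are all exactly the standard argument. The paper states this lemma as well known and offers no proof of its own, so there is nothing to compare against; your write-up supplies precisely the argument the authors are implicitly relying on.
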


\vskip5pt

\begin{lem}
\label{lem:mor}
Consider the subgroup \,$\mathrm{Homeo}_{0,\mu}({\mathbb T}^{2})$\,
of \,$\mathrm{Homeo}_{0}({\mathbb T}^{2})$\, whose elements preserve a
probability measure \,$\mu$\, and let \,$\mathrm{Homeo}_{0,\mu}({\mathbb R}^{2})$\,
be the subgroup of \,$\mathrm{Homeo}_{0}({\mathbb R}^{2})$\,  consisting of all the lifts of elements of \,$\mathrm{Homeo}_{0,\mu}({\mathbb T}^{2})$.
 Then the map
$$\rho_{\mu}: \mathrm{Homeo}_{0,\mu}({\mathbb R}^{2}) \to {\mathbb R}^{2}$$
is a morphism of groups.
\end{lem}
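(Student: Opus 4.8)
The plan is to prove that $\rho_{\mu}$ converts composition into vector addition, i.e. that
\[
\rho_{\mu}\big(\tilde{\psi}\circ\tilde{\phi}\big)=\rho_{\mu}\big(\tilde{\psi}\big)+\rho_{\mu}\big(\tilde{\phi}\big)
\qquad\text{for all}\quad \tilde{\psi},\tilde{\phi}\in\mathrm{Homeo}_{0,\mu}({\mathbb R}^{2}),
\]
which is exactly the statement that $\rho_{\mu}$ is a morphism from this group into $({\mathbb R}^{2},+)$. Before the computation I would record that the target makes sense as a group morphism: the composition of two lifts is a lift of the composition and the inverse of a lift is a lift, so $\mathrm{Homeo}_{0,\mu}({\mathbb R}^{2})$ is indeed a group, and each element is a lift of a $\mu$-preserving isotopic to the identity homeomorphism.

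First I would recall that for a lift $\tilde{\phi}$ of $\phi\in\mathrm{Homeo}_{0}({\mathbb T}^{2})$ the displacement $\tilde{\phi}-Id$ is $\mathbb{Z}^{2}$-periodic: since $[\phi]=Id$ in $\mathrm{GL}(2,{\mathbb Z})$, Lemma \ref{rem:pact} gives $\tilde{\phi}\circ T_{v}=T_{v}\circ\tilde{\phi}$ for every $v\in\mathbb{Z}^{2}$, whence $(\tilde{\phi}-Id)(\tilde{x}+v)=(\tilde{\phi}-Id)(\tilde{x})$. Thus $\tilde{\phi}-Id$ descends to a continuous map $D_{\phi}\colon{\mathbb T}^{2}\to{\mathbb R}^{2}$, and $\rho_{\mu}(\tilde{\phi})=\int_{{\mathbb T}^{2}}D_{\phi}\,d\mu$ is a well-defined finite vector, the integrand being bounded by continuity on the compact torus.

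The core of the argument is the splitting of the displacement of the composite. I would write
\[
\tilde{\psi}\circ\tilde{\phi}-Id=\big(\tilde{\psi}\circ\tilde{\phi}-\tilde{\phi}\big)+\big(\tilde{\phi}-Id\big).
\]
The second summand descends to $D_{\phi}$. For the first, evaluating at $\tilde{x}\in{\mathbb R}^{2}$ gives $\tilde{\psi}(\tilde{\phi}(\tilde{x}))-\tilde{\phi}(\tilde{x})=D_{\psi}\big(\pi(\tilde{\phi}(\tilde{x}))\big)=D_{\psi}\big(\phi(\pi(\tilde{x}))\big)$, so it descends to $D_{\psi}\circ\phi$. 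Integrating against $\mu$ yields
\[
\rho_{\mu}\big(\tilde{\psi}\circ\tilde{\phi}\big)=\int_{{\mathbb T}^{2}}D_{\psi}\circ\phi\,\,d\mu+\int_{{\mathbb T}^{2}}D_{\phi}\,\,d\mu.
\]
I would then invoke the invariance $\phi_{*}\mu=\mu$ together with the change of variables formula $\int_{{\mathbb T}^{2}}g\circ\phi\,d\mu=\int_{{\mathbb T}^{2}}g\,d(\phi_{*}\mu)$ applied to $g=D_{\psi}$, which turns the first integral into $\int_{{\mathbb T}^{2}}D_{\psi}\,d\mu=\rho_{\mu}(\tilde{\psi})$. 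This gives the claimed additivity and completes the proof.

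The only point requiring care — more bookkeeping than genuine difficulty — is this invariance step: it is essential that the \emph{inner} map $\phi$ preserves $\mu$, which is guaranteed precisely because every element of $\mathrm{Homeo}_{0,\mu}({\mathbb T}^{2})$ is $\mu$-preserving by definition. Everything else is a routine manipulation of the cocycle identity $D_{\psi\circ\phi}=D_{\psi}\circ\phi+D_{\phi}$ and linearity of the integral.
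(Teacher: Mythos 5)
Your proof is correct: the cocycle identity $D_{\psi\circ\phi}=D_{\psi}\circ\phi+D_{\phi}$ for the descended displacement functions, combined with the $\phi$-invariance of $\mu$, is exactly the standard argument, and the paper itself states this lemma as well known without supplying a proof. Your preliminary remarks (that the displacements descend to $\mathbb{T}^{2}$ because lifts of isotopic-to-the-identity maps commute with the covering translations, and that additivity into the abelian group $(\mathbb{R}^{2},+)$ suffices for a group morphism) correctly cover the only points that need checking.
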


\vskip5pt

Now we introduce three results proved by the second author
in \cite{JR:arxivsp}
that will be used to find global fixed points
for nilpotent groups of  torus diffeomorphisms.

%%%%%%%%%%%%%%%%%%%%%%%%%%%%%%%%%%%%%%%%%%%%%%%%%%%%%%
%%%%%%%%%%%%%%%%%%%%%%%%%%%%%%%%%%%%%%%%%%%%%%%%%%%%%%
\vskip10pt
\begin{teo}
\label{teo:plane}
Let \,$G \subset \mathrm{Diff}_{+}^{1}({\mathbb R}^{2})$\, be a finitely generated
nilpotent subgroup
that preserves a non-empty compact set. Then \,$G$\, has a global fixed point.
\end{teo}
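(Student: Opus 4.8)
The plan is to induct on the nilpotency class of \,$G$\,, using the upper central series \,$Z^{(0)}(G) \subset Z^{(1)}(G) \subset \cdots \subset Z^{(n)}(G) = G$\, introduced in Section~\ref{sec:pre}, while keeping the problem compact by replacing the given invariant set \,$K$\, with its \emph{filling} \,$L$\,, namely the union of \,$K$\, with the bounded connected components of \,${\mathbb R}^{2}\setminus K$. Since every element of \,$G$\, is a homeomorphism it permutes the complementary components of \,$K$\, preserving boundedness, so \,$L$\, is again a non-empty compact \,$G$-invariant set and \,${\mathbb R}^{2}\setminus L$\, is connected (so \,$L$\, is full, hence non-separating). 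The basic existence input is the classical fact, via Brouwer's plane translation theorem (equivalently the Cartwright--Littlewood theorem for non-separating continua), that an orientation-preserving homeomorphism of \,${\mathbb R}^{2}$\, preserving a non-empty compact set has a fixed point, and in fact one inside the filling of that set.

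First I would record that each \,$F_{i} := \mathrm{Fix}\big(Z^{(i)}(G)\big)$\, is a closed \,$G$-invariant set: since \,$Z^{(i)}(G)$\, is a characteristic, hence normal, subgroup, the identity \,$g\big(\mathrm{Fix}(h)\big)=\mathrm{Fix}(ghg^{-1})$\, gives \,$g(F_{i})=F_{i}$\, for all \,$g\in G$. The goal is \,$F_{n}=\mathrm{Fix}(G)\neq\emptyset$, and I would prove by induction on \,$i$\, the stronger statement that \,$K_{i}:=F_{i}\cap L$\, is non-empty; it is then automatically compact and \,$G$-invariant. The case \,$i=0$\, is just \,$K_{0}=L\neq\emptyset$.

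For the inductive step \,$i\to i+1$\, consider the subgroup \,$W:=Z^{(i+1)}(G)$\, acting on the non-empty compact set \,$K_{i}$; note that \,$W$\, is finitely generated because subgroups of finitely generated nilpotent groups are finitely generated. By construction \,$K_{i}\subseteq F_{i}$\,, so every element of \,$Z^{(i)}(G)$\, fixes \,$K_{i}$\, pointwise; since \,$[\,W,W\,]\subseteq[\,W,G\,]\subseteq Z^{(i)}(G)$\, by the definition of the upper central series, the restricted action \,$W\to\mathrm{Homeo}(K_{i})$\, factors through the abelian group \,$Z^{(i+1)}(G)/Z^{(i)}(G)=Z\big(G/Z^{(i)}(G)\big)$. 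Thus \,$W$\, is a finitely generated group of orientation-preserving plane diffeomorphisms preserving the compact set \,$K_{i}$\, and acting on it through an abelian quotient. It then remains to produce a common fixed point \,$p$\, of \,$W$\, inside the filling of \,$K_{i}$\,, which is contained in \,$L$\, because \,$L$\, is full; such a \,$p$\, lies in \,$L$\, and is fixed by \,$Z^{(i+1)}(G)$\,, hence \,$p\in K_{i+1}$\,. This closes the induction even at \,$i+1=n$\,, where \,$W=G$\, acts on \,$K_{n-1}$\, through the abelian group \,$G/Z^{(n-1)}(G)$\,.

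The main obstacle, and the technical heart of the argument, is exactly this last assertion: producing a common fixed point, inside the filling of \,$K_{i}$\,, for a finitely generated group \,$W$\, that is only \emph{abelian in its action on the invariant compact set} — its commutators fix \,$K_{i}$\, pointwise but may move nearby points. One cannot simply restrict \,$W$\, to \,$K_{i}$\,, since \,$K_{i}$\, is an arbitrary compact set rather than a surface, so the formal reduction to smaller nilpotency class does not by itself finish the proof. I expect this step to require a fixed-point-index analysis on the full, non-separating set \,$L$\,: using cellularity one locates, for each generator, a fixed point of non-zero index by a Lefschetz-type count, and then exploits that diffeomorphisms which commute on \,$K_{i}$\, preserve one another's local index data in order to intersect these fixed-point sets down to a common fixed point. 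Making this index bookkeeping uniform over the finitely many generators of \,$W$\, is where the real work lies, and it is carried out in \cite{JR:arxivsp}.
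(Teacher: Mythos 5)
There is a genuine gap, and in fact two separate problems. First, a point of context: the paper does not prove Theorem \ref{teo:plane} at all; it is quoted verbatim from \cite{JR:arxivsp}, where the argument goes through a Thurston decomposition relative to the invariant compact set (the analogue of Proposition \ref{pro:tnf}), together with the Franks--Handel--Parwani machinery for abelian actions and an analysis of distortion and of pseudo-Anosov components. Your outline is a different, much more elementary route, but it is not a proof: the entire content of the theorem is concentrated in the step you flag as ``the main obstacle'' and then defer back to \cite{JR:arxivsp}. Producing a common fixed point, located inside the filling of \,$K_{i}$\,, for a finitely generated group \,$W$\, of plane diffeomorphisms whose commutators fix \,$K_{i}$\, pointwise but need not be trivial near \,$K_{i}$\, is essentially the theorem itself in relative form (at the last stage \,$W=G$\,), and the suggested ``fixed-point-index bookkeeping'' is not an argument --- nothing in the proposal explains why commuting on \,$K_{i}$\, controls index data off \,$K_{i}$\,, which is exactly where the difficulty lives. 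So the induction on the upper central series reorganizes the problem without reducing it.

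Second, your ``basic existence input'' is false as stated. An orientation-preserving homeomorphism of \,${\mathbb R}^{2}$\, preserving a non-empty compact set does have a fixed point (Brouwer), but it need not have one inside the filling of that set when the set is disconnected: the rotation by \,$\pi$\, about the origin preserves \,$\{(1,0),(-1,0)\}$\,, whose filling is itself, and its unique fixed point is the origin. The Cartwright--Littlewood theorem requires an invariant \emph{continuum}. This is not a cosmetic issue for your scheme, because the induction hinges on locating fixed points inside \,$L$\, so that \,$K_{i+1}=F_{i+1}\cap L$\, stays non-empty and compact; with a disconnected \,$K_{i}$\, (which is unavoidable in general) you have no mechanism to force the new fixed points into \,$L$\,. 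The group-theoretic scaffolding (normality of \,$Z^{(i)}(G)$\,, invariance of \,$F_{i}$\,, finite generation of subgroups of finitely generated nilpotent groups) is fine, but the analytic core is missing.
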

%\vskip5pt

%%%%%%%%%%%%%%%%%%%%%%%%%%%%%%%%%%%%%%%%%%%%%%%%%%%%%%
%%%%%%%%%%%%%%%%%%%%%%%%%%%%%%%%%%%%%%%%%%%%%%%%%%%%%%
%\vskip5pt
\begin{teo}
\label{cor:plane}
Let \,$G \subset \mathrm{Diff}_{+}^{1}({\mathbb R}^{2})$\, be a
nilpotent subgroup such that
\,$\mathrm{Fix}(\phi)$\, is a non-empty compact set for some \,$\phi \in G$.
Then \,$G$\, has a global fixed point.
\end{teo}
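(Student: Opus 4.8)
The plan is to deduce Theorem \ref{cor:plane} from Theorem \ref{teo:plane} in two stages: first I would settle the case where $G$ is finitely generated by an induction along the lower central series, and then I would drop the finite generation hypothesis by a compactness argument. Set $K:=\mathrm{Fix}(\phi)$, which is compact and non-empty by hypothesis. Throughout I would use the elementary identities $g(\mathrm{Fix}(h))=\mathrm{Fix}(ghg^{-1})$ and $g(\mathrm{Fix}(N))=\mathrm{Fix}(gNg^{-1})$, together with the consequence that $\mathrm{Fix}(N)$ is $G$-invariant whenever $N\trianglelefteq G$.

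\textbf{Finitely generated case.} Assume $G$ is finitely generated and let $\gamma_{1}=G\supseteq\gamma_{2}=[G,G]\supseteq\cdots\supseteq\gamma_{c+1}=\{1\}$ be its lower central series, where $c$ is the nilpotency class; each $\gamma_{j}$ is characteristic in $G$ and $[\gamma_{j},G]=\gamma_{j+1}$. For $1\le j\le c+1$ put $F_{j}:=K\cap\mathrm{Fix}(\gamma_{j})$, and I would prove by downward induction on $j$ that every $F_{j}$ is compact and non-empty. For $j=c+1$ one has $\mathrm{Fix}(\gamma_{c+1})=\mathbb{R}^{2}$, so $F_{c+1}=K$. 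For the inductive step, assume $F_{j+1}$ is compact and non-empty and consider $\Lambda_{j}:=\langle\phi,\gamma_{j}\rangle$, which is finitely generated and nilpotent since every subgroup of a finitely generated nilpotent group is finitely generated. The crucial point is that $\Lambda_{j}$ preserves the compact set $F_{j+1}$: the generator $\phi$ fixes $K$ pointwise and preserves $\mathrm{Fix}(\gamma_{j+1})$, while for $w\in\gamma_{j}$ the element $z:=[w,\phi]$ lies in $[\gamma_{j},G]=\gamma_{j+1}$, so $w\phi w^{-1}=z\phi$ coincides with $\phi$ on $\mathrm{Fix}(\gamma_{j+1})$ (since $z$ fixes that set pointwise and $\phi$ preserves it). Hence $w(K)\cap\mathrm{Fix}(\gamma_{j+1})=\mathrm{Fix}(w\phi w^{-1})\cap\mathrm{Fix}(\gamma_{j+1})=F_{j+1}$, and because $w$ preserves $\mathrm{Fix}(\gamma_{j+1})$ this yields $w(F_{j+1})=F_{j+1}$. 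Theorem \ref{teo:plane}, applied to $\Lambda_{j}$ acting on the invariant non-empty compact set $F_{j+1}$, then gives $\mathrm{Fix}(\Lambda_{j})\neq\emptyset$; but $\mathrm{Fix}(\Lambda_{j})=K\cap\mathrm{Fix}(\gamma_{j})=F_{j}$, closing the induction. Taking $j=1$ gives $\mathrm{Fix}(G)=F_{1}\neq\emptyset$, as $\Lambda_{1}=G$.

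\textbf{General case.} For arbitrary $G$, consider the family of finitely generated subgroups $\Gamma$ with $\phi\in\Gamma\subseteq G$. Each such $\Gamma$ is finitely generated nilpotent and contains $\phi$, so by the previous case $\mathrm{Fix}(\Gamma)\neq\emptyset$; moreover $\mathrm{Fix}(\Gamma)\subseteq K$ is closed in the compact set $K$, hence compact. This family is directed, because the subgroup generated by finitely many of its members again contains $\phi$ and is finitely generated, so $\{\mathrm{Fix}(\Gamma)\}$ has the finite intersection property. By compactness of $K$ the total intersection is non-empty, and it equals $\mathrm{Fix}(G)$ since every $g\in G$ lies in some such $\Gamma$, for instance $\langle\phi,g\rangle$. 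Therefore $\mathrm{Fix}(G)\neq\emptyset$.

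The main obstacle is the inductive step of the finitely generated case, namely verifying that each term $\gamma_{j}$ of the lower central series preserves the nested compact set $F_{j+1}$. This is precisely where nilpotency is used, through the containment $[\gamma_{j},G]=\gamma_{j+1}$, which forces the conjugate $w\phi w^{-1}$ to agree with $\phi$ on $\mathrm{Fix}(\gamma_{j+1})$ and thereby keeps the successive fixed-point sets nested and invariant. The remaining ingredients — that subgroups of finitely generated nilpotent groups are finitely generated, and the finite-intersection argument removing finite generation — are standard.
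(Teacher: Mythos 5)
Your argument is correct. Note first that the paper does not actually prove Theorem \ref{cor:plane}: it is imported without proof from \cite{JR:arxivsp}, so there is no in-paper argument to compare against. Your derivation of it from Theorem \ref{teo:plane} is sound: the downward induction along the lower central series works because, for $w\in\gamma_{j}$, the commutator $[w,\phi]$ lies in $\gamma_{j+1}$ and hence acts trivially on $\mathrm{Fix}(\gamma_{j+1})$, which is exactly what forces $w\phi w^{-1}$ to agree with $\phi$ there and makes $F_{j+1}=\mathrm{Fix}(\phi)\cap\mathrm{Fix}(\gamma_{j+1})$ invariant under $\langle\phi,\gamma_{j}\rangle$; the identity $\mathrm{Fix}(\Lambda_{j})=F_{j}$ then closes the induction, and the finite-generation of $\Lambda_{j}$ is guaranteed by polycyclicity of finitely generated nilpotent groups. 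The reduction of the general case to the finitely generated one via the finite intersection property is also fine. Your two main devices --- an induction along a central series in which commutators drop one level and therefore fix the relevant compact set pointwise, and the finite-intersection-property reduction to finitely generated subgroups --- are precisely the techniques the authors themselves deploy elsewhere (compare the proof of Proposition \ref{pro:boucas} and the proof of Theorem \ref{teo:main2}, which runs an analogous induction on the upper central series and invokes Theorem \ref{teo:main4} at each step), so your proof is very much in the spirit of the paper even though it supplies a step the paper leaves to the reference.
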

\vskip5pt

%%%%%%%%%%%%%%%%%%%%%%%%%%%%%%%%%%%%%%%%%%%%%%%%%%%%%%
%%%%%%%%%%%%%%%%%%%%%%%%%%%%%%%%%%%%%%%%%%%%%%%%%%%%%%

The next result is a Thurston decomposition for a normal subgroup of
a nilpotent group. It is inspired in similar results given in
\cite{fhp01} for the abelian case.

%%%%%%%%%%%%%%%%%%%%%%%%%%%%%%%%%%%%%%%%%%%%%%%%%%%%%%
%%%%%%%%%%%%%%%%%%%%%%%%%%%%%%%%%%%%%%%%%%%%%%%%%%%%%%

\begin{pro}
\label{pro:tnf}
Let \,$G$\, be a finitely generated nilpotent subgroup of \,$\mathrm{Diff}_{+}(S)$\, where \,$S$\,
is a finitely punctured surface. Consider a normal subgroup \,$H$\, of \,$G$\, and a compact $G$-invariant
set \,$K$\, such that \,$K \subset \mathrm{Fix}(H)$. Denote \,$M =S \setminus K$\, and suppose that
the Euler characteristic \,$\chi (M)$\, is negative if \,$K$\, is finite.
Then there exist a finite family \,${\mathcal R}$\, of pairwise disjoint, pairwise non-homotopic simple closed curves
in \,$M$\, and a family of pairwise disjoint open annuli \,${\mathcal A}$\, in \,$M$\, of the same cardinal as
the family \,${\mathcal R}$\, such that
every curve in \,${\mathcal R}$\, is a core curve of exactly an annulus in \,${\mathcal A}$\, and given \,$\phi \in G$\,
there exists \,$\theta_{\phi} \in \mathrm{Homeo}_{+}(S)$\, such that \,$\phi$\, is isotopic to \,$\theta_{\phi}$\,
relative to \,$K$\, and \,$\theta_{\phi} ({\mathcal A})= {\mathcal A}$. Moreover given a connected component \,$X$\,
of \,$S \setminus {\mathcal A}$\, we have
\begin{itemize}
\item $\theta_{\phi} \equiv Id$\, if \,$\sharp (X \cap K) = \infty$\, and \,$\phi \in H \,;$
\item $\chi (X \setminus K) < 0$\, if \,$X \cap K$\, is finite. Moreover the set
\,$\{ (\theta_{\phi})_{|X \setminus K}: \phi \in J\}$, where \,$J \subset G$\, is the stabilizer of \,$X$\, modulo
isotopy relative to \,$K$, is a group of pseudo-Anosov and finite order elements that is virtually cyclic and
nilpotent.
\end{itemize}
\end{pro}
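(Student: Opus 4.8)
The plan is to prove Proposition \ref{pro:tnf} as a group version of the Nielsen--Thurston decomposition, the key structural input being that a finitely generated nilpotent subgroup of a mapping class group is virtually abelian. First I would pass to the mapping class group: let $\Gamma$ be the image of $G$ in the group of isotopy classes, relative to $K$, of orientation-preserving homeomorphisms of $S$. Since $G$ is finitely generated and nilpotent, $\Gamma$ is a finitely generated nilpotent, hence solvable, subgroup of a mapping class group, so by the theorem of Birman, Lubotzky and McCarthy it is virtually abelian. Because $K \subset \mathrm{Fix}(H)$, every element of $H$ fixes $K$ pointwise, so the image of $H$ consists of classes rel $K$ that pin down the points of $K$.

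Next I would produce the reducing data. Applying the theory of canonical reduction systems to $\Gamma$ furnishes a finite family $\mathcal{R}$ of pairwise disjoint, pairwise non-homotopic essential simple closed curves in $M = S \setminus K$ that is $\Gamma$-invariant up to isotopy; canonicity is precisely what makes a single family serve the whole (non-abelian) group and its normalizer. Thickening the curves of $\mathcal{R}$ produces the family $\mathcal{A}$ of pairwise disjoint open annuli, and Thurston's normal form together with Bers-type representatives lets me select, for each $\phi \in G$, a homeomorphism $\theta_\phi \in \mathrm{Homeo}_+(S)$ isotopic to $\phi$ relative to $K$ with $\theta_\phi(\mathcal{A}) = \mathcal{A}$. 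This parallels the abelian construction of \cite{fhp01}.

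It remains to analyze a component $X$ of $S \setminus \mathcal{A}$ and its stabilizer $J$ modulo isotopy rel $K$. Having cut along the full canonical reduction system, the restriction to $X \setminus K$ of every element of $J$ is irreducible, i.e. pseudo-Anosov or of finite order. If $\sharp(X \cap K) = \infty$, then since a nontrivial pseudo-Anosov or finite order mapping class has only finitely many fixed points, while each $\phi \in H$ fixes $X \cap K$ pointwise, the restriction of $\phi \in H$ to $X$ is the trivial class and $\theta_\phi$ may be taken to be the identity on $X$. If instead $X \cap K$ is finite, then $X$ is a non-annular piece of the decomposition (a reducing curve is never peripheral, so no once-punctured disks occur), whence $\chi(X \setminus K) < 0$ after deleting the finite set $X \cap K$; the restricted group $\{(\theta_\phi)_{|X \setminus K} : \phi \in J\}$ is then a nilpotent subgroup of $\mathrm{MCG}(X \setminus K)$ consisting of pseudo-Anosov and finite order elements, hence virtually abelian by Birman--Lubotzky--McCarthy, and, since commuting pseudo-Anosov maps share their invariant foliations, virtually cyclic.

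The main obstacle is not the classification of the individual pieces but the simultaneous construction: obtaining one reduction system $\mathcal{R}$ and one annular family $\mathcal{A}$ preserved by representatives of every element of the non-abelian group $G$, and making sense of the decomposition when $K$ is infinite so that no mapping class group of an infinitely punctured surface is needed. The $\Gamma$-invariance of the canonical reduction system, combined with the virtually abelian structure from Birman--Lubotzky--McCarthy, resolves the first difficulty; the second is handled by observing that the components meeting $K$ in an infinite set are exactly those on which $H$ is forced to act trivially, so only the finitely many punctures lying in the remaining pieces are relevant to the mapping class group arguments.
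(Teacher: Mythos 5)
You should first be aware that the paper does not prove Proposition \ref{pro:tnf} internally: it is obtained by specializing Proposition 3.1 of \cite{JR:arxivsp} (taking $H_{1}=H$, $H_{2}=G$, $K_{1}=K$, $K_{2}=\emptyset$), which is itself an adaptation to nilpotent groups of the Thurston decompositions of \cite{fhp01}. Your sketch follows the same general philosophy as that source --- pass to isotopy classes rel $K$, use the constrained structure of nilpotent subgroups of mapping class groups, extract an invariant reduction system, thicken to annuli, and classify the pieces --- so at the level of strategy you are aligned with the reference.

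The genuine gap is your treatment of infinite $K$. The set $K$ is only assumed to be a compact $G$-invariant subset of $\mathrm{Fix}(H)$ (in the application it is $\mathrm{supp}(\mu)$, typically a Cantor set), so $M=S\setminus K$ is a surface of infinite type. Canonical reduction systems, the Birman--Lubotzky--McCarthy theorem, and the pseudo-Anosov/finite-order/reducible trichotomy are finite-type statements; none of them applies directly to the group $\Gamma$ of isotopy classes rel $K$ that you introduce. Your proposed resolution --- that the components meeting $K$ in an infinite set are exactly those on which $H$ acts trivially, so only finitely many punctures matter --- is circular, since those components are only defined once the decomposition has been built. The actual construction in \cite{fhp01} (and in Proposition 3.1 of \cite{JR:arxivsp}) works with isotopy classes relative to finite invariant subsets of $K$, produces reducing systems for these finite-type quotients, and shows that the reducing data stabilizes as the finite subsets exhaust $K$; this limiting argument is precisely what yields both the family ${\mathcal R}$ and the conclusion $\theta_{\phi}\equiv Id$ on components with $\sharp(X\cap K)=\infty$, and it is absent from your proposal. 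Two smaller points: cutting along the canonical reduction system of a group only makes the induced group on each piece irreducible, not each individual element pseudo-Anosov or of finite order there, so you still need the argument that for a nilpotent group the individual canonical reduction systems assemble into a single invariant system; and the virtually cyclic claim for a \emph{nilpotent} (rather than abelian) piece group requires passing through its non-trivial center and McCarthy's description of centralizers of pseudo-Anosov classes, not merely the fact that commuting pseudo-Anosov maps share foliations.
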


%%%%%%%%%%%%%%%%%%%%%%%%%%%%%%%%%%%%%%%%%%%%%%%%%%%%%%
%%%%%%%%%%%%%%%%%%%%%%%%%%%%%%%%%%%%%%%%%%%%%%%%%%%%%%

The result is obtained by considering
the subgroups \,$H_{1}=H$\,, \,$H_{2}=G$\, and the sets
\,$K_{1} = K$\,
and \,$K_{2}=\emptyset$\, in Proposition 3.1 of \cite{JR:arxivsp}.

%%%%%%%%%%%%%%%%%%%%%%%%%%%%%%%%%%%%%%%%%%%%%%%%%%%%%%
%%%%%%%%%%%%%%%%%%%%%%%%%%%%%%%%%%%%%%%%%%%%%%%%%%%%%%
\vskip30pt
\section{Rotational properties}
\label{sec:rot}
%\vskip10pt
%%%%%%%%%%%%%%%%%%%%%%%%%%%%%%%%%%%%%%%%%%%%%%%%%%%%%%
%%%%%%%%%%%%%%%%%%%%%%%%%%%%%%%%%%%%%%%%%%%%%%%%%%%%%%

In the introduction we say that a subgroup \,$G$\, of \,$\text{Homeo}_{0}(\mathbb{T}^{2})$\,
is irrotational when every element  \,$\phi \in G$\, is irrotational with respect to
\,$\mathcal{P}(\phi)$\,, i.e. there exists a lift
\,$\tilde{\phi}\in \text{Homeo}_{0}(\mathbb{R}^{2})$\, of \,$\phi$\, such that
\,$\rho_{\mu}\big(\tilde{\phi}\big)=(\,0\,,0\,)$\, for all
\,$\mu \in \mathcal{P}(\phi)$. Following the definition of \,$\mathcal{I}_{G}$\, we have that a subgroup
\,$G \subset \text{Homeo}_{0}(\mathbb{T}^{2})$\,
is irrotational when \,$G=\mathcal{I}_{G}$\,.

 Now, let us introduce a new definition of  irrotational subgroups of
\,$\text{Homeo}_{0}(\mathbb{T}^{2})$.
%subgroup of
%\,$\text{Homeo}_{0}(\mathbb{T}^{2})$.
We say that a subgroup  \,$G \subset \text{Homeo}_{0}(\mathbb{T}^{2})$\, is
\,{\it $\mathcal{P}(G)$-irrotational}\,
when every element \,$\phi \in G$\, is irrotational with respect to
\,$\mathcal{P}(G)$\,, i.e. there exists a lift
\,$\tilde{\phi}\in \text{Homeo}_{0}(\mathbb{R}^{2})$\, of \,$\phi$\, such that
\,$\rho_{\mu}\big(\tilde{\phi}\big)=(\,0\,,0\,)$\, for all
\,$\mu \in \mathcal{P}(G)$.
Following the definition of \,$G_{\mathcal{I}}$\, we have that a subgroup
\,$G \subset \text{Homeo}_{0}(\mathbb{T}^{2})$\,
is $\mathcal{P}(G)$-irrotational when \,$G=G_{\mathcal{I}}$\,.

These two concepts of irrotationality could be a priori different since given
\,$\phi \in G$\, the set
\,${\mathcal P}(\phi)$\,  can be much bigger than
\,${\mathcal P} (G)$.
Later on we will see that if \,$G$\, is a nilpotent subgroup of
\,$\mathrm{Homeo}_{0}({\mathbb T}^{2})$\, the definitions coincide
(cf. Proposition \ref{pro:bigsma2}).

Now we introduce some notations.
Let \,$G$\, be a subgroup of \,$\text{Homeo}(\mathbb{T}^{2})$\, and suppose \,$\mu\in\mathcal{P}(G)$\,. Following the convention about \,$\tilde{\phi}$\, and \,$\phi$\, we define the following sets\,:
\begin{align}%\label{}
G^{\mu}_{\mathcal{I}}:= & \big\{ \phi \in G_{0} \ \ ; \ \exists \ \ \text{a lift} \
\ \tilde{\phi} \ \ \text{s.t.} \ \ \rho_{\mu}\big(\tilde{\phi}\big)=(0\,,0)  \big\}    \notag   \\
\tilde{G}^{\mu}_{\mathcal{I}}:=  &  \big\{ \tilde{\phi} \in \text{Homeo}_{0}(\mathbb{R}^{2}) \ \ ; \
\phi\in G_{0} \ \ \text{and} \ \  \rho_{\mu}\big(\tilde{\phi}\big)=(0\,,0) \big\}     \notag   \\
\tilde{G}_{\mathcal{I}}:=  &  \big\{ \tilde{\phi} \in \text{Homeo}_{0}(\mathbb{R}^{2}) \ \ ; \ \phi\in G_{0} \ \ \text{and}
\ \  \rho_{\nu}\big(\tilde{\phi}\big)=(0\,,0)   \notag   \ \
\text{for all} \ \  \nu \in {\mathcal P} (G) \big\}        \notag \\
= &   \bigcap_{\nu\in\mathcal{P}(G)} \!\! \tilde{G}^{\nu}_{\mathcal{I}} \,.
\notag
\end{align}

Since the map
\,$\rho_{\mu}:\text{Homeo}_{0,\mu}(\mathbb{R}^{2})\rightarrow\mathbb{R}^{2}$\,
is a morphism, it follows
that \,$\tilde{G}^{\mu}_{\mathcal{I}} \ , \ \tilde{G}_{\mathcal{I}}$\,
(resp. \,$G^{\mu}_{\mathcal{I}}$) are subgroups of \,$\text{Homeo}_{0}(\mathbb{R}^{2})$\,
(resp. \,$\text{Homeo}_{0}(\mathbb{T}^{2})$). Moreover, we have that
\,$\tilde{G}_{\mathcal{I}}$\,   and    \,$\tilde{G}^{\mu}_{\mathcal{I}}$\,
are lifts of
\,${G}_{\mathcal I}$\, and
\,${G}_{\mathcal I}^{\mu}$\, respectively, since the natural projections
\,$\tilde{\phi}\in\tilde{G}_{\mathcal I}^{\mu} \xrightarrow{\kappa} \phi \in {G}_{\mathcal I}^{\mu} $\, and
\,$\tilde{\phi}\in \tilde{G}_{\mathcal I} \xrightarrow{\kappa} \phi \in {G}_{\mathcal I}$\,  are isomorphisms.

Clearly, we also have \,$\mathcal{I}_{G} \subset G_{\mathcal{I}} \subset
G_{\mathcal{I}}^{\mu}$\, for any \,$\mu\in\mathcal{P}(G)$\,.
Notice that ${\mathcal P}(G)$ is non-empty
if $G$ is an amenable group.

In \cite{rib02}
we relate
the rotational properties of a nilpotent group with
the existence of lifts and we prove the following  proposition that will be used in the next section.
The condition  \,$1\notin\mathrm{spec}[\psi]$\, in next proposition is equivalent to
the non-vanishing of the Lefschetz number $L(\psi)$
(cf. \cite[Lemma 1.2]{rib02} \cite{Brooks-Brown-Pak-Taylor}).

%%%%%%%%%%%%%%%%%%%%%%%%%%%%%%%%%%%%%%%%%%%%%%%%
%%%%%%%%%%%%%%%%%%%%%%%%%%%%%%%%%%%%%%%%%%%%%%%%
%\vskip10pt
\begin{pro}
\label{pro:bffi}
Let \,$G$\, be a nilpotent subgroup of \,$\mathrm{Homeo}({\mathbb T}^{2})$.
Suppose there exists \,$\psi \in G$\,  with \,$1\notin\mathrm{spec}[\psi]$\,.
Then we obtain
\,$\tilde{G}_{\mathcal I}=\tilde{G}_{\mathcal I}^{\mu}$\,
and \,$G_{\mathcal{I}} =G_{\mathcal I}^{\mu}$\, for any
\,$\mu \in {\mathcal P}(G)$.
Moreover \,$G_{\mathcal I}$\,
is  a finite index normal  subgroup of \,$G_{0}$\,.
\end{pro}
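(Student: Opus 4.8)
The plan is to leverage the single element $\psi$ with $A:=[\psi]$ satisfying $1\notin\mathrm{spec}(A)$, which is the same as $\det(A-\mathrm{Id})\neq 0$, so that $A-\mathrm{Id}$ is invertible over $\mathbb{R}$. The engine of the proof is a commutator recursion that expresses the rotation vector of any element of $G_0$ through a fixed lattice, uniformly in the measure. Fix $\mu\in\mathcal{P}(G)$ (nonempty because nilpotent groups are amenable). Given $\phi\in G_0$, choose a lift $\tilde\phi$ and a lift $\tilde\psi$ of $\psi$, and form the iterated commutators $\phi_0=\phi$, $\phi_{k+1}=[\psi,\phi_k]$ with the compatibly built lifts $\widetilde{\phi_{k+1}}=\tilde\psi\,\widetilde{\phi_k}\,\tilde\psi^{-1}\,\widetilde{\phi_k}^{-1}$. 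Applying the mapping class group homomorphism $[\,]$ shows inductively that $[\phi_k]=\mathrm{Id}$, so every $\phi_k$ lies in $G_0$; and since $G$ is nilpotent the $\phi_k$ descend through the lower central series and vanish at the nilpotency class $c$, so $\phi_c=\mathrm{id}$ and $\widetilde{\phi_c}$ is a deck transformation $T_w$ with $w\in\mathbb{Z}^2$.

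Next I would establish the recursion $\rho_\mu(\widetilde{\phi_{k+1}})=(A-\mathrm{Id})\,\rho_\mu(\widetilde{\phi_k})$. As $\mu\in\mathcal{P}(G)$ is $\psi$-invariant we have $\psi_*\mu=\mu$, so Lemma \ref{rem:pact} gives $\rho_\mu(\tilde\psi\,\widetilde{\phi_k}\,\tilde\psi^{-1})=A\,\rho_\mu(\widetilde{\phi_k})$, while Lemma \ref{lem:mor} supplies the homomorphism property that turns the commutator into a difference. Iterating yields $(A-\mathrm{Id})^{c}\rho_\mu(\tilde\phi)=\rho_\mu(\widetilde{\phi_c})=w$. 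The decisive observation is that $w$ is simply the translation vector of the \emph{fixed} map $\widetilde{\phi_c}$, so $w\in\mathbb{Z}^2$ is independent of $\mu$; by invertibility, $\rho_\mu(\tilde\phi)=(A-\mathrm{Id})^{-c}w$ does not depend on $\mu\in\mathcal{P}(G)$. Hence $\rho_\mu(\tilde\phi)=\rho_\nu(\tilde\phi)$ for all $\nu\in\mathcal{P}(G)$, which gives at once $\tilde{G}^{\mu}_{\mathcal I}=\tilde{G}_{\mathcal I}$ and $G_{\mathcal I}=G^{\mu}_{\mathcal I}$.

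For the final assertion I would note that $\phi\mapsto\rho_\mu(\tilde\phi)\bmod\mathbb{Z}^2$ is a well-defined homomorphism $\bar\rho_\mu\colon G_0\to\mathbb{R}^2/\mathbb{Z}^2$ (Lemma \ref{lem:mor} together with $G_0$-invariance of $\mu$) whose kernel is exactly $G^{\mu}_{\mathcal I}=G_{\mathcal I}$, so normality of $G_{\mathcal I}$ in $G_0$ is automatic. The formula above confines its image to the subgroup $\big((A-\mathrm{Id})^{-c}\mathbb{Z}^2\big)/\mathbb{Z}^2$, which is finite of order $|\det(A-\mathrm{Id})|^{c}$ (using $\mathbb{Z}^2\subset(A-\mathrm{Id})^{-1}\mathbb{Z}^2$ since $A-\mathrm{Id}$ is integral). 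Therefore $[G_0:G_{\mathcal I}]\le|\det(A-\mathrm{Id})|^{c}<\infty$.

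I expect the main obstacle to be the lift bookkeeping: choosing compatible lifts of $\psi$ and of the iterated commutators so that the conjugation formula of Lemma \ref{rem:pact} and the homomorphism property of Lemma \ref{lem:mor} can be chained, and above all isolating the fact that the top commutator $\widetilde{\phi_c}$ is a measure-independent deck transformation. This last point is exactly what upgrades a per-measure computation into the uniform equalities $\tilde{G}^{\mu}_{\mathcal I}=\tilde{G}_{\mathcal I}$ and $G_{\mathcal I}=G^{\mu}_{\mathcal I}$.
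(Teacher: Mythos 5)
Your argument is correct. Note that this paper does not actually prove Proposition \ref{pro:bffi} --- it imports it from the companion paper \cite{rib02} --- but your proof is sound and self-contained: the key points (the iterated commutator $\phi_{k+1}=[\psi,\phi_k]$ stays in $G_0$, terminates at a deck transformation $T_w$ with $w\in\mathbb{Z}^2$ independent of the measure, and the recursion $\rho_\mu(\widetilde{\phi_{k+1}})=([\psi]-\mathrm{Id})\rho_\mu(\widetilde{\phi_k})$ from Lemmas \ref{rem:pact} and \ref{lem:mor} combined with the invertibility of $[\psi]-\mathrm{Id}$) all check out, and the index bound $|\det([\psi]-\mathrm{Id})|^{c}$ via the image of the induced homomorphism $G_0\to\mathbb{R}^2/\mathbb{Z}^2$ is valid. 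This is exactly the mechanism the present paper uses elsewhere (compare the computation of $\rho_{\mu_j}([\tilde{g}_{j+1}^{-1},\tilde{h}])$ in Lemma \ref{lem:rinv}), so your route is essentially the intended one.
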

%%%%%%%%%%%%%%%%%%%%%%%%%%%%%%%%%%%%%%%%%%%%%%%%
%%%%%%%%%%%%%%%%%%%%%%%%%%%%%%%%%%%%%%%%%%%%%%%%

Nilpotent subgroups of \,$\mathrm{Homeo}({\mathbb T}^{2})$\, induce nilpotent
subgroups of the mapping class group of \,${\mathbb T}^{2}$, i.e.
nilpotent subgroups of \,$\mathrm{GL}(2\,,{\mathbb Z})$. We will need a classification
of such groups in order to study rotational properties.
They are virtually cyclic and metabelian.
Moreover, there exists a unique example of non-abelian group, up to conjugacy.

%%%%%%%%%%%%%%%%%%%%%%%%%%%%%%%%%%%%%%%%%%%%%%%%%%%%
%%%%%%%%%%%%%%%%%%%%%%%%%%%%%%%%%%%%%%%%%%%%%%%%%%%%
\vskip10pt
\begin{lem}
\label{lem:nmcg}
Let \,${\mathcal G}$\, be a nilpotent subgroup of
\,$\mathrm{MCG}({\mathbb T}^{2})$\,.
Then \,${\mathcal G}$\, is either of the form
\,$\langle N \rangle$\, or
\,$\langle N \,, -N \rangle$\, for some \,$N\in \mathcal{G}$\,,
or it is conjugated by a matrix in
\,$\mathrm{GL}(2\,,{\mathbb Q})$\, to the group
\[    {\mathcal H}:= \left\{
\left(
\begin{array}{rr}
1 & 0 \\
0 & 1 \\
\end{array}
\right), \
\left(
\begin{array}{rr}
-1 & 0 \\
0 & -1 \\
\end{array}
\right), \
\left(
\begin{array}{rr}
1 & 0 \\
0 & -1 \\
\end{array}
\right), \
\left(
\begin{array}{rr}
-1 & 0 \\
0 & 1 \\
\end{array}
\right), \right. \]
\[ \left.
\left(
\begin{array}{rr}
0 & -1 \\
1 & 0 \\
\end{array}
\right), \
\left(
\begin{array}{rr}
0 & 1 \\
-1 & 0 \\
\end{array}
\right), \
\left(
\begin{array}{rr}
0 & 1 \\
1 & 0 \\
\end{array}
\right), \
\left(
\begin{array}{rr}
0 & -1 \\
-1 & 0 \\
\end{array}
\right)
\right\}. \]
\end{lem}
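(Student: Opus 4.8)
The plan is to argue by a dichotomy on the cardinality of $\mathcal{G}$, treating finite nilpotent subgroups through the classification of finite subgroups of $\mathrm{GL}(2,\mathbb{Z})$ and infinite ones through a centralizer computation; the main obstacle lies in the infinite case.

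First I would dispose of the finite case. Recall that, up to conjugacy in $\mathrm{GL}(2,\mathbb{Z})$, every finite subgroup is contained in one of the two maximal finite subgroups, the dihedral groups $D_{4}$ and $D_{6}$ (the symmetry groups of the square and of the regular hexagon, of orders $8$ and $12$). Since $D_{m}$ is nilpotent precisely when $m$ is a power of $2$, the nilpotent subgroups occurring here are the cyclic groups $C_{n}$ with $n\in\{1,2,3,4,6\}$, the Klein four group $D_{2}$, and $D_{4}$ itself. A cyclic subgroup is evidently $\langle N\rangle$. For a copy of $D_{2}$, the determinant map restricts to a surjection onto $\{\pm1\}$ whose kernel is an order two subgroup of $\mathrm{SL}(2,\mathbb{Z})$, hence equal to $\{\pm I\}$; thus $-I\in D_{2}$ and the group has the form $\langle N,-N\rangle$ with $N$ an involution of determinant $-1$. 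Finally $D_{4}$ affords, up to equivalence, the unique faithful rational two-dimensional representation of the abstract dihedral group of order $8$, and $\mathcal{H}$ is one realization of it; hence any such subgroup is $\mathrm{GL}(2,\mathbb{Q})$-conjugate to $\mathcal{H}$.

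Next I would treat the infinite case, where $\mathcal{G}$ contains an element of infinite order. Since $\mathrm{GL}(2,\mathbb{Z})$ is virtually free, so is its subgroup $\mathcal{G}$; a finite index free subgroup of $\mathcal{G}$ is nilpotent and free, hence cyclic, so $\mathcal{G}$ is virtually infinite cyclic. Such a group is finite-by-$\mathbb{Z}$ or finite-by-$D_{\infty}$, and the latter is ruled out because $D_{\infty}$ would be a non-nilpotent quotient of $\mathcal{G}$. Therefore $\mathcal{G}=F\rtimes\langle t\rangle$ with $F$ its finite torsion subgroup and $t$ of infinite order. As $\mathrm{Aut}(F)$ is finite, some power $N:=t^{k}$ with $k\geq1$ acts trivially on $F$ by conjugation and hence lies in the centre of $\mathcal{G}$; note that $N$ still has infinite order.

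The crux is then to compute the centralizer of $N$ in $\mathrm{GL}(2,\mathbb{Z})$, which contains $\mathcal{G}$. I would split on the characteristic polynomial $p$ of $N$. If $p$ is irreducible over $\mathbb{Q}$ then $\mathbb{Q}[N]$ is a quadratic field $K$, necessarily real since $N$ has infinite order and $\det N=\pm1$; the commutant of $N$ in $M_{2}(\mathbb{Q})$ equals $K$, so the centralizer is the unit group of an order of $K$, which by Dirichlet's theorem is isomorphic to $\{\pm1\}\times\mathbb{Z}$. If $p$ is reducible its roots are rational algebraic integers, hence lie in $\{\pm1\}$; distinct roots would give $N^{2}=I$, contradicting infinite order, so $N$ is $\pm$ a nontrivial unipotent and a direct computation again yields a centralizer isomorphic to $\{\pm1\}\times\mathbb{Z}$. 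In either case the centralizer is abelian of the form $\langle M\rangle\times\langle -I\rangle=\langle M,-M\rangle$. To finish, I would observe that the infinite subgroup $\mathcal{G}$ of $\mathbb{Z}\times C_{2}$ is cyclic when it avoids $-I$ and equals $\langle P,-P\rangle$ when it contains $-I$, by projecting onto the $\mathbb{Z}$-factor; in both cases $\mathcal{G}$ has one of the two claimed abelian forms. The principal difficulty throughout is the infinite case, and specifically the verification that an infinite-order element of $\mathrm{GL}(2,\mathbb{Z})$ has abelian centralizer isomorphic to $\mathbb{Z}\times C_{2}$.
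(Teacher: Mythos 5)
Your proof is correct; I could not find a gap in either branch of the dichotomy. One caveat on the comparison: this paper does not actually contain its own proof of Lemma \ref{lem:nmcg} --- it is deferred to the Appendix of \cite{rib02} --- but the surrounding text (Remark \ref{rem:spec:01}, and the assertion in the proof of Proposition \ref{pro:bigsma3} that the finite nilpotent subgroups of $\mathrm{GL}(2,{\mathbb Z})$ are the subgroups of $C_6$ and of $D_4$, ``proved by using Remark \ref{rem:spec:01}'') makes clear that the authors' intended route is elementary bookkeeping with traces, determinants and eigenvalues. Your route is genuinely different and leans on heavier, citable inputs: the classification of finite subgroups of $\mathrm{GL}(2,{\mathbb Z})$ up to conjugacy (maximal ones the dihedral groups of orders $8$ and $12$), uniqueness of the faithful rational two-dimensional representation of the order-$8$ dihedral group, virtual freeness of $\mathrm{GL}(2,{\mathbb Z})$ plus the Stallings--Wall description of virtually cyclic groups to produce a central element $N$ of infinite order, and Dirichlet's unit theorem to identify the centralizer of $N$ with $\{\pm I\}\times{\mathbb Z}$. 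The key steps all check out: the only involution in $\mathrm{SL}(2,{\mathbb Z})$ is $-I$, so every Klein four subgroup contains $-I$ and has the form $\langle N,-N\rangle$; an infinite-order $N$ is non-derogatory, so its commutant in $M_2({\mathbb Q})$ is ${\mathbb Q}[N]$, and whether that is a real quadratic field (Kronecker's theorem excludes the imaginary case) or ${\mathbb Q}[M]$ with $M$ nilpotent, the integral unit group is $\{\pm I\}\times{\mathbb Z}$, whose infinite subgroups are exactly the $\langle P\rangle$ and $\langle P,-P\rangle$. What your approach buys is conceptual transparency --- it isolates why the only non-abelian possibility is the finite group ${\mathcal H}$ and why every infinite nilpotent subgroup is abelian of rank one --- at the cost of invoking several classical theorems that should be referenced explicitly; the eigenvalue approach buys self-containedness, which matters here because Remark \ref{rem:spec:01} is reused elsewhere in the paper.
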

%\vskip5pt
%%%%%%%%%%%%%%%%%%%%%%%%%%%%%%%%%%%%%%%%%%%%%%%%%%%%
%%%%%%%%%%%%%%%%%%%%%%%%%%%%%%%%%%%%%%%%%%%%%%%%%%%%

The group ${\mathcal H}$ is isomorphic to the dihedral group $D_4$.
We admit subgroups of \,$\mathrm{MCG}({\mathbb T}^{2})$\, that contain orientation-reversing classes.
Indeed  ${\mathcal H}$ contains orientation-reversing classes. Hence a subgroup of
\,$\mathrm{MCG}({\mathbb T}^{2})$\, consisting of orientation-preserving classes is always abelian.
We  present a proof of the above lemma in the Appendix of \cite{rib02}.
%
%
%

%%%%%%%%%%%%%%%%%%%%%%%%%%%%%%%%%%%%%%%%%%%%%
%%%%%%%%%%%%%%%%%%%%%%%%%%%%%%%%%%%%%%%%%%%%%

%\newpage
\vskip30pt
\section{Construction of a $G$-invariant measure}
\label{subsec:consinv}
%\vskip10pt
%%%%%%%%%%%%%%%%%%%%%%%%%%%%%%%%%%%%%%%%%%%%%
%%%%%%%%%%%%%%%%%%%%%%%%%%%%%%%%%%%%%%%%%%%%%

Let \,$G \subset \mathrm{Homeo}({\mathbb T}^{2})$\, be a nilpotent subgroup.
%In the last two \textcolor{red}{propositions} we presented some properties of
%the \,$\mathcal{P}(G)$-irrotational subgroup of \,$G$,
%i.e.  properties of the subgroup \,$G_{\mathcal{I}}$\, of \,$G$.
In this section we want to present a
%We want to complete a similar
description for the set
\,${\mathcal I}_{G}$\, of irrotational elements of \,$G$. In
particular we will show that \,${\mathcal I}_{G}$\, is a subgroup of \,$G$.
The main difficulty to prove this is that the sets \,${\mathcal P}(f)$\,
and \,${\mathcal P}(g)$\, can be very different for
\,$f,g \in G$. In order to overcome this problem, we do the following\,: given
\,$\phi \in G_{0}$\, and \,$\mu \in {\mathcal P}(\phi)$\,
we construct, under certain natural conditions,
a  measure \,$\mu' \in \mathcal{P}(G)$\,
such that \,${\rho}_{\mu}\big(\tilde{\phi}\big)={\rho}_{\mu'}\big(\tilde{\phi}\big)$\,
for any lift \,$\tilde{\phi}$\, of \,$\phi$.
This property can be interpreted as some rigidity property of  nilpotent groups
since a property that is satisfied for one element induces an analogous
global property.

Let us begin this section with a simple case, corresponding to groups of
isotopic to the identity homeomorphisms of the $2$-torus.
%
%The group \,${\mathcal I}_{G}$\, is interesting since
%even in the non-Anosov case
%it naturally contains derived subgroups of $G$.
First we prove Theorem \ref{teo:main3}.

%%%%%%%%%%%%%%%%%%%%%%%%%%%%%%%%%%%%%%%%%%%%%%%%%%%%%
%%%%%%%%%%%%%%%%%%%%%%%%%%%%%%%%%%%%%%%%%%%%%%%%%%%%%
\vskip10pt
\begin{teo13}
\label{teo:rot0}
%Let \,$G$\, be a nilpotent subgroup of \,$\mathrm{Homeo}_{0}({\mathbb T}^{2})$. Then \,$G'$\, is irrotational. Changing \,$\mathrm{Homeo}_{0}({\mathbb T}^{2})$\, by
%\,$\mathrm{Homeo}_{+}({\mathbb T}^{2})$\, $\big($resp. $\mathrm{Homeo}({\mathbb T}^{2})$\,$\big)$ we have that
%\,$G''$\, $\big($resp. \,$G'''$\,$\big)$ is irrotational.

 If \,$G$\, is a nilpotent subgroup of \,$\mathrm{Homeo}({\mathbb T}^{2})$ then$\,:$
\begin{itemize}
\item[$(i)$]
 $G':=[\,G,G\,]$\, is irrotational when \,$G \subset \mathrm{Homeo}_{0}({\mathbb T}^{2})\,;$

\item[$(ii)$]
$G'':=[\,G',G'\,]$\,  is irrotational when  \,$G \subset \mathrm{Homeo}_{+}({\mathbb T}^{2})\,;$
\item[$(iii)$]
$G''':=[\,G'',G''\,]$\, is always irrotational.

\end{itemize}

\end{teo13}

%%%%%%%%%%%%%%%%%%%%%%%%%%%%%%%%%%%%%%%%%%%%%%%%%%%%%
%%%%%%%%%%%%%%%%%%%%%%%%%%%%%%%%%%%%%%%%%%%%%%%%%%%%%

In particular there are non-trivial irrotational elements in the nilpotent subgroups
of \,$\mathrm{Homeo}({\mathbb T}^{2})$\, except in simple cases.
For instance if \,$G \subset \mathrm{Homeo}_{0}({\mathbb T}^{2})$\, is nilpotent
then either \,${\mathcal I}_{G}$\, is non-trivial or \,$G$\, is abelian.

\begin{proof}[Proof of Theorem \ref{teo:main3}]
Suppose that \,$G$\, is a subgroup of \,$\mathrm{Homeo}_{0}({\mathbb T}^{2})$.
Furthermore suppose  \,$G$\,  finitely generated.
Let us remark from \cite{Yu,Austin}   that the
elements \,$\phi\in G'$\, grow sublinearly with respect to the word
metric in the following sense: \,$|\phi^{n}|=O(n^{1/2})$\, where
\,$|\phi^{n}|$\, denotes the distance from \,$\phi^{n}$\, to the
identity with respect to the word metric for a given finite
generating set of \,$G$.
In particular,  \,$\phi$\, is a distortion element i.e. the sequence
\,$\big(|\phi^{n}|/n \big)_{n\geq1}$\, goes to zero when \,$n$\,
tends to infinity. Moreover if \,$\phi^{n} =h_{i_{1}}
\circ h_{i_{2}} \circ \cdots \circ h_{i_{m}}$\, where \,$h_{1}, \ldots,
h_{s}$\, are generators of \,$G$, we choose lifts \,$\tilde{h}_{1},
\ldots, \tilde{h}_{s}, \tilde{\phi}$\, and we define
\,$\tilde{\psi}=\tilde{h}_{ i_{1}} \circ \cdots \circ \tilde{h}_{i_{m}}$.
We obtain
\[  (\tilde{\phi}^{n}(\tilde{x})- \tilde{x}) -  (\tilde{\phi}^{n}(\tilde{y}) -\tilde{y})
= (\tilde{\psi}(\tilde{x}) -\tilde{x}) - (\tilde{\psi}(\tilde{y}) - \tilde{y}) \]
\[ = \sum_{j=1}^{m} [(\tilde{h}_{i_{j}} - Id) \circ
\tilde{h}_{i_{j+1}} \circ \cdots \circ \tilde{h}_{i_{n}}] (\tilde{x}) -
\sum_{j=1}^{m} [(\tilde{h}_{i_{j}} - Id) \circ \tilde{h}_{i_{j+1}}
\circ \cdots \circ \tilde{h}_{i_{n}}](\tilde{y}). \]
We deduce
\,$\|(\tilde{\phi}^{n}(\tilde{x})-\tilde{x}) - (\tilde{\phi}^{n}(\tilde{y})-\tilde{y}) \| \leq 2 M |\phi^{n}|$\,
for all \,$\tilde{x}, \tilde{y} \in \mathbb{R}^{2}$\, where
\,$M = \max_{1 \leq i \leq s, \ \tilde{z}  \in {\mathbb
R}^{2}} \|\tilde{h}_{i}(\tilde{z}) -  \tilde{z} \|$\,
and \,$\|\cdot\|$\, denotes the usual norm in \,$\mathbb{R}^{2}$. Then we obtain
\begin{equation}
\label{equ:uniform} \lim_{n \to \infty} \max_{\tilde{x}, \tilde{y}}
\frac{\| (\tilde{\phi}^{n}(\tilde{x})  - \tilde{x}) - (\tilde{\phi}^{n}(\tilde{y}) - \tilde{y})
\|}{n} =0 \, .
\end{equation}
Let $\nu$ be a $\phi$-invariant ergodic Borel probability measure.
Birkhoff's ergodic theorem implies
\,$\lim_{n \to \infty} \frac{\tilde{\phi}^{n}(\tilde{x}) - \tilde{x}}{n}= \rho_{\nu}(\tilde{\phi})$\,
$\nu$-a.e. Moreover \,$\frac{\tilde{\phi}^{n}(\tilde{x}) - \tilde{x}}{n}$\,
converges uniformly to \,$\rho_{\nu}(\tilde{\phi})$\, when \,$n \to \infty$ by Equation (\ref{equ:uniform}).
In particular we obtain $\rho (\tilde{\phi}) = \{ \rho_{\nu}(\tilde{\phi}) \}$.
Analogously, these properties hold if \,$G$\, is any nilpotent subgroup
 of \,$\mathrm{Homeo}_{0}({\mathbb T}^{2})$\,
since any element of \,$G'$\, belongs to the derived group of some finitely
generated subgroup of \,$G$.

Since \,$G$\, is nilpotent there exists a probability measure
\,$\mu$\, that is invariant by all the elements of \,$G$.
Lemma \ref{lem:mor} implies that the map \,$\rho_{\mu}:
\mathrm{Homeo}_{0,\mu}({\mathbb R}^{2}) \to {\mathbb R}^{2}$\,  is a
morphism of groups.
The restriction of \,$\rho_{\mu}$\, to the derived group of
\,$\mathrm{Homeo}_{0,\mu}({\mathbb R}^{2})$\, is a vanishing map and hence
there exists a unique lift
\,$\tilde{\phi}$\, such that \,$\rho_{\mu}(\tilde{\phi})=(0,0)$\, for any
\,$\phi \in G'$. Since the rotation set $\rho (\tilde{\phi})$ contains exactly one point,
we deduce $\rho (\tilde{\phi}) = \{(0,0)\}$.
In particular \,$\phi$\, is irrotational and \,$\tilde{\phi}$\, is its
irrotational lift.

Suppose \,$G \subset \mathrm{Homeo}_{+}({\mathbb T}^{2})$.
In this case the group \,$[\,G\,]$\, is abelian by Lemma \ref{lem:nmcg}.
Hence \,$G'=[\,G,G\,]$\, is contained in \,$\mathrm{Homeo}_{0}({\mathbb T}^{2})$\,
and all the elements of \,$G''=[\,G',G'\,]$\, are irrotational.

If \,$G \subset \mathrm{Homeo}({\mathbb T}^{2})$\, then
 \,$G' \subset \mathrm{Homeo_{+}(\mathbb{T}^{2})}$\,
 and we have that \,$G''$\, is contained in  \,$\mathrm{Homeo}_{0}({\mathbb T}^{2})$. Consequently,  all the elements of \,$G'''=[\,G'',G''\,]$\, are irrotational.
%
%Therefore all the elements of \,$G'''=(G')''$\, are irrotational.
\end{proof}

\vglue10pt

\begin{proof}[Proof of Theorem \ref{teo:main6} for \,$G \subset \mathrm{Homeo}_{0}({\mathbb T}^{2})$]
It suffices to show that given a lift \,$\tilde{\phi}$\, of an element \,$\phi$\, of \,$G$\, and
a \,$\phi$-invariant probability measure \,$\mu$\, then there exists a \,$G$-invariant probability
measure \,$\nu$\, such that \,$\rho_{\mu}(\tilde{\phi})=\rho_{\nu}(\tilde{\phi})$.

We will prove that there exists a \,$\langle Z^{(j)}(G), \phi \rangle$-invariant probability measure
\,$\nu_{j}$\, such that \,$\rho_{\mu}(\tilde{\phi})=\rho_{\nu_{j}}(\tilde{\phi})$\, by induction on \,$j$.
The result is obvious for \,$j=0$\, by defining \,$\nu_{0}=\mu$.
Suppose that the result holds true for \,$j$.
We have
\,$\rho_{\nu_{j}}(\tilde{\phi}) = \rho_{\psi^{*} \nu_{j}}\big(\tilde{\psi}^{-1} \circ \tilde{\phi} \circ \tilde{\psi}\big)$\,
for any lift \,$\tilde{\psi}$\, of any element \,$\psi  \in Z^{(j+1)}(G)$\, by  Lemma \ref{rem:pact}.
Notice that since \,$\psi$\, normalizes \,$\langle Z^{(j)}(G), \phi \rangle$\,, the measure
\,$\psi^{*} \nu_{j}$\, is \,$\langle Z^{(j)}(G), \phi \rangle$-invariant.
Consider a $G$-invariant Borel probability measure $\nu'$.
Since $\rho_{\nu'}: \mathrm{Homeo}_{0,\nu'}({\mathbb R}^{2}) \to {\mathbb R}^{2}$
is a morphism of groups by Lemma \ref{lem:mor}, we obtain
$\rho_{\nu'} [\tilde{\psi}^{-1},\tilde{\phi}] = (0,0)$.
Since  \,$G'$\, is irrotational by Theorem \ref{teo:main3} and
\,$[{\psi}^{-1},\phi] \in Z^{(j)}(G)$, we get
$\rho_{\psi^{*} \nu_{j}} [\tilde{\psi}^{-1},\tilde{\phi}]   = \rho_{\nu'} [\tilde{\psi}^{-1},\tilde{\phi}] = (0,0)$.
We deduce
\[ \rho_{\nu_{j}}(\tilde{\phi}) =
\rho_{\psi^{*} \nu_{j}}\big(\tilde{\psi}^{-1} \circ \tilde{\phi} \circ \tilde{\psi}\big) =
\rho_{\psi^{*} \nu_{j}} [\tilde{\psi}^{-1},\tilde{\phi}] + \rho_{\psi^{*} \nu_{j}} (\tilde{\phi}) =
\rho_{\psi^{*} \nu_{j}} (\tilde{\phi}) . \]
Hence \,$Z^{(j+1)}(G)$\, acts affinely on the compact convex set
\[
C_{j}:=\{ \nu \in {\mathcal P} \langle Z^{(j)}(G), \phi \rangle : \rho_{\nu}(\tilde{\phi})=\rho_{\mu}(\tilde{\phi})\}. \]
The fixed-point property of amenable groups implies that there exists a fixed point \,$\nu_{j+1}$\,
of the action of \,$Z^{(j+1)}(G)$\, on \,$C_{j}$\,, i.e. a \,$\langle Z^{(j+1)}(G), \phi \rangle$-invariant probability
measure \,$\nu_{j+1}$\,
such that \,$\rho_{\nu_{j+1}}(\tilde{\phi})=\rho_{\mu}(\tilde{\phi})$.
\end{proof}

\subsection{The construction method}
\label{subsec:cons}
We want to extend the result in Theorem \ref{teo:main6} to more general subgroups
of \,$\mathrm{Homeo}({\mathbb T}^{2})$. In order to achieve this goal, we construct
a \,$G$-invariant measure \,$\mu'$, following the classical
construction of an invariant measure for a solvable group of
homeomorphisms on a compact metrizable space.

Let \,$\phi \,,\mu$\, be as above. We denote \,$\rho_{0}= \rho_{\mu}(\tilde{\phi})$.
 Up to replace \,$\mu$\, with another
measure in \,${\mathcal P}(\phi)$\, we can suppose
\,$\mu \in {\mathcal P}(G_{0})$\, and \,$\rho_{\mu}(\tilde{\phi})=\rho_{0}$\, by Theorem \ref{teo:main6}
for $G_0 \subset  \mathrm{Homeo}_{0}({\mathbb T}^{2})$.
We construct new measures by using an inductive method.
We choose a finite set \,${\mathcal S}:= \{g_{1},\ldots,g_{n}\}$\, such that
\,$\langle G_{0}, g_{1}, \ldots, g_{n} \rangle = G$.
If we do not require any other property to the elements of \,${\mathcal S}$\, then
we can choose
\,$n \leq 2$\, by Lemma \ref{lem:nmcg}.
Let us define
\,$\mu_{0} = \mu$. Our goal is constructing a Borel probability measure
\,$\mu_{j}$\, that is $\psi$-invariant for any
\,$\psi \in \langle G^{j}, \phi \rangle$\, where
\,$G^{j}=\langle G_{0}, g_{0}\,,g_{1},\ldots,g_{j}\rangle$\, and
\,$g_{0}$\, denotes the identity map of \,$\mathbb{T}^{2}$.
In this context the following condition will play a fundamental role in the construction\,:
\[ (*): \ \mathrm{spec}[g_{j}] \cap (e^{2 \pi i {\mathbb Q}} \setminus \{1\}) = \emptyset \quad
\text{for all} \quad 0 \leq j \leq n \]
where \,$e^{2 \pi i {\mathbb Q}}$\, is the set of roots of the unity.
We will see that Condition $(*)$ and the construction process implies
\,$\rho_{\mu_{j}}\big(\tilde{\phi}\big) = \rho_{\mu}\big(\tilde{\phi}\big)$\,
for all lifts \,$\tilde{\phi}$\, of \,$\phi$\, and  \,$0 \leq j \leq n$.
We say that \,$G$\, satisfies the Condition $(*)$ if there exists a set \,${\mathcal S}$\,
as described above. Let us suppose that  \,$G/G_{0}$\,  is abelian.
Otherwise \,$G/G_{0}$\, is conjugated to \,${\mathcal H}$ by Lemma \ref{lem:nmcg}
and it is easy to see that \,$G$\, does not satisfy the Condition $(*)$.

%Notice that $G/G_{0}$ is either abelian or conjugated to ${\mathcal H}$ by Lemma \ref{lem:nmcg}.

\vglue10pt

Let us suppose  that the \,$\langle G^{j}, \phi \rangle$-invariant measure \,$\mu_{j}$\,
has been already constructed for \,$0 \leq j < n$.
Then we define the measures\,:
\[ \mu_{j,\ell} = (g_{j+1}^{\ell})_{*} (\mu_{j}) \quad \mathrm{for} \ \,\ell \in {\mathbb Z}
\quad  \mathrm{and} \quad \nu_{j,\ell} =
\frac{1}{\ell} \, {\sum_{p=0}^{\ell-1} \mu_{j,p}}
\quad \mathrm{for} \ \,\ell \in {\mathbb Z}^{+} . \]
From the compactness of the space of probability measures on \,${\mathbb T}^{2}$\,
in the weak$^{*}$ topology we know that
the sequence \,$\big\{\nu_{j,\ell}\big\}_{\ell\geq1}$\, has a subsequence that converges to a probability
measure \,$\mu_{j+1}$\, in this topology. Of course
the choice of \,$\mu_{j+1}$\, is not necessarily
unique. The measure \,$\mu_{j+1}$\, is \,$g_{j+1}$-invariant since we have
the relation\,:
$$(g_{j+1})_{*}(\nu_{j,\ell}) - \nu_{j,\ell}=\frac{1}{\ell}
\Big(\mu_{j,\ell}- \mu_{j,0} \Big) \quad
\text{for all} \quad \ell\in\mathbb{Z}^{+}.$$

Let us check out that \,$\mu_{j,\ell}$\, is \,$\langle G^{j}, \phi \rangle$-invariant for any \,$\ell \in {\mathbb Z}$.
For this let us fix \,$\psi \in \langle G^{j}, \phi \rangle$.
Since   $G/G_{0}$ is abelian, we have
\,$\psi^{-1} \circ g_{j+1}^{-\ell} \circ \psi \circ g_{j+1}^{\ell}=h_{j}$\, for some
\,$h_{j} \in G_{0} \cap G'$\,
and we conclude that
\[ \psi_{*} (\mu_{j,\ell}) =
\psi_{*} \big((g_{j+1}^{\ell})_{*} (\mu_{j})\big) = (\psi \circ g_{j+1}^{\ell})_{*} (\mu_{j})  =
(g_{j+1}^{\ell} \circ \psi  \circ h_{j})_{*} (\mu_{j})  \]
\[ = \big((g_{j+1}^{\ell})_{*} \circ  \psi_{*} \circ (h_{j})_{*}\big)(\mu_{j}) = (g_{j+1}^{\ell})_{*}( \psi_{*}(\mu_{j})) =
(g_{j+1}^{\ell})_{*} (\mu_{j}) = \mu_{j,\ell} \]
for any \,$\ell \in {\mathbb Z}$. We obtain that \,$\nu_{j,\ell}$\,
is \,$\langle G^{j}, \phi \rangle$-invariant for any \,$\ell \in {\mathbb Z}^{+}$.
Thus, the measure \,$\mu_{j+1}$\, is \,$g_{j+1}$-invariant and
\,$\langle G^{j}, \phi \rangle$-invariant.
Consequently,  it is
\,$\langle G^{j+1}, \phi \rangle$-invariant.

\vglue10pt

 Now, let us remember some elementary properties about the eigenvalues of the elements
 of \,$\mathrm{GL}(2\,,\mathbb{Z})$\,.

%%%%%%%%%%%%%%%%%%%%%%%%%%%%%%%%%%
\vskip10pt
\begin{rem}\label{rem:spec:01}
 For a given \,$A \in \mathrm{GL}(2\,,\mathbb{Z})$\, we have\,:
\begin{itemize}
\item
 The complex (non real) eigenvalues occur when \,$\mathrm{tr}(A)=0\,,\pm1$\,
and \,$\det(A)=1$. They are\,: \,$\pm i$\, (if \,$\mathrm{tr}(A)=0$\,) ,
\,$\frac{1}{2}\pm \frac{\sqrt{3\,}}{2} i$\, (if \,$\mathrm{tr}(A)=1$\,)
or \,$-\frac{1}{2}\pm \frac{\sqrt{3\,}}{2} i$\, (if \,$\mathrm{tr}(A)=-1$\,)\,;

\item
 The eigenvalue \,$-1$\, occurs when \,$\mathrm{tr}(A)=-2$\, and
\,$\det(A)=1$\, or when \,$\mathrm{tr}(A)=0$\, and
\,$\det(A)=-1$\,;

\item
 The eigenvalue \,$1$\, occurs when \,$\mathrm{tr}(A)=2$\, and
\,$\det(A)=1$\, (i.e. when \,$A$\, is either the identity map or a  Dehn twist) or when
\,$\mathrm{tr}(A)=0$\, and
\,$\det(A)=-1$\,;

\item
 In all the other cases \,$A$\, has an eigenvalue of modulus greater then
\,$1$\, and another of modulus less then \,$1$.

\end{itemize}

\end{rem}
\vskip5pt
%%%%%%%%%%%%%%%%%%%%%%%%%%%%%%%%%%

%%%%%%%%%%%%%%%%%%%%%%%%%%%%%%%%%%%%%%%%%%%%%%%%%%%
%%%%%%%%%%%%%%%%%%%%%%%%%%%%%%%%%%%%%%%%%%%%%%%%%%%
\vskip20pt
\subsection{Properties of the measure}

%%%%%%%%%%%%%%%%%%%%%%%%%%%%%%%%%%%%%%%%%%%%%%%%%%%
%%%%%%%%%%%%%%%%%%%%%%%%%%%%%%%%%%%%%%%%%%%%%%%%%%%

 The idea of the construction
is trying to prove that, defined the measures \,$\mu_{0},\mu_{1},\ldots,\mu_{j}$\, as above,  we define
 the subsequent measures in the process preserving rotation numbers
of the elements of \,$G^{j}$. For this, first we need to obtain a formula for rotation numbers
with respect to the new measures as a function of the old ones.
Following our convention \,$\tilde{\psi}$\, denotes a lift of \,$\psi \in \textrm{Homeo}(\mathbb{T}^{2})$\,
to the universal covering of \,$\mathbb{T}^{2}$.

%%%%%%%%%%%%%%%%%%%%%%%%%%%%%%%%%%
%%%%%%%%%%%%%%%%%%%%%%%%%%%%%%%%%%
\vskip10pt
\begin{lem}
\label{lem:rotev}
Suppose that
\begin{equation}
\label{equ:rotd}
 \rho_{\mu_{j,p}}\big(\tilde{h}\big) = \rho_{\mu_{j}}\big(\tilde{h}\big) = \rho_{\mu_{n}}\big(\tilde{h}\big)
\end{equation}
for all $0 \leq j < n$, $p \in {\mathbb Z}$ and $h \in Z^{(k)}(G) \cap  G_{0}$.
Then we have
\begin{equation}
\label{equ:pre}
\rho_{\mu_{j,p}}\big(\tilde{h}\big) =
[g_{j+1}]^{p}\big(\rho_{\mu_{j}}\big(\tilde{h}\big)\big) + \sum_{k=1}^{p} \, [g_{j+1}]^{k}
\big(\rho_{\mu_{j}}\big(\big[\,\tilde{g}_{j+1}^{-1} \,, \tilde{h}\hskip1pt\big]\big)\big)
\end{equation}
for all \,$0 \leq j <n$ , $p \geq 1$ , $h \in Z^{(k+1)}(G) \cap G_{0}$\, and
\begin{equation}
\label{equ:pre2}
\rho_{\mu_{j,p}}\big(\tilde{h}\big) =
[g_{j+1}]^{p}\big(\rho_{\mu_{j}}\big(\tilde{h}\big)\big) - \sum_{k=1}^{-p} \, [g_{j+1}]^{-(k-1)}
\big(\rho_{\mu_{j}}\big(\big[\,\tilde{g}_{j+1}^{-1} \,, \tilde{h}\hskip1pt\big]\big)\big)
\end{equation}
for all \,$0 \leq j <n$ , $p \leq -1$ , $h \in Z^{(k+1)}(G) \cap G_{0}$.
\end{lem}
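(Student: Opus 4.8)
The plan is to reduce the two closed formulas \eqref{equ:pre} and \eqref{equ:pre2} to a single affine recursion in the index $p$ and then solve it explicitly. Write $A := [g_{j+1}]$, regarded as an element of $\mathrm{GL}(2\,,\mathbb{Z})$ acting linearly on $\mathbb{R}^{2}$, and set $c := \rho_{\mu_{j}}\big([\,\tilde{g}_{j+1}^{-1}\,,\tilde{h}\,]\big)$ and $r_{p} := \rho_{\mu_{j,p}}\big(\tilde{h}\big)$, so that $r_{0}=\rho_{\mu_{j}}\big(\tilde{h}\big)$. A preliminary fact I would record is that every $\mu_{j,q}$ is $G_{0}$-invariant: for $f\in G_{0}$ the conjugate $g_{j+1}^{-q}\,f\,g_{j+1}^{q}$ again lies in $G_{0}\subset G^{j}$ by normality of $G_{0}$, and since $\mu_{j}$ is $\langle G^{j},\phi\rangle$-invariant we get $f_{*}\mu_{j,q}=(g_{j+1}^{q})_{*}\big((g_{j+1}^{-q}f g_{j+1}^{q})_{*}\mu_{j}\big)=\mu_{j,q}$.

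Using $\mu_{j,p}=(g_{j+1})_{*}(\mu_{j,p-1})$ together with the $G_{0}$-invariance of $\mu_{j,p-1}$, I would apply Lemma \ref{rem:pact} with $\psi=g_{j+1}$ to the element $g_{j+1}^{-1}\,h\,g_{j+1}\in G_{0}$, whose lift is $\tilde{g}_{j+1}^{-1}\tilde{h}\,\tilde{g}_{j+1}$, to obtain $r_{p}=A\big(\rho_{\mu_{j,p-1}}(\tilde{g}_{j+1}^{-1}\tilde{h}\,\tilde{g}_{j+1})\big)$. Since $h\in Z^{(k+1)}(G)$, the commutator $[g_{j+1}^{-1},h]$ lies in $Z^{(k)}(G)\cap G_{0}$; writing $\tilde{g}_{j+1}^{-1}\tilde{h}\,\tilde{g}_{j+1}=[\,\tilde{g}_{j+1}^{-1},\tilde{h}\,]\,\tilde{h}$ and invoking the morphism property of $\rho_{\mu_{j,p-1}}$ (Lemma \ref{lem:mor}), which applies because $\mu_{j,p-1}$ is invariant under $h$, under $[g_{j+1}^{-1},h]$ and under their product, I obtain $\rho_{\mu_{j,p-1}}(\tilde{g}_{j+1}^{-1}\tilde{h}\,\tilde{g}_{j+1})=\rho_{\mu_{j,p-1}}\big([\,\tilde{g}_{j+1}^{-1},\tilde{h}\,]\big)+\rho_{\mu_{j,p-1}}\big(\tilde{h}\big)$. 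Finally the hypothesis \eqref{equ:rotd}, applied to $[g_{j+1}^{-1},h]\in Z^{(k)}(G)\cap G_{0}$, gives $\rho_{\mu_{j,p-1}}\big([\,\tilde{g}_{j+1}^{-1},\tilde{h}\,]\big)=\rho_{\mu_{j}}\big([\,\tilde{g}_{j+1}^{-1},\tilde{h}\,]\big)=c$, a value independent of $p$. Since $A$ is linear, these identities combine into the recursion $r_{p}=A\,r_{p-1}+A\,c$.

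The remaining step is the explicit solution of $r_{p}=A\,r_{p-1}+A\,c$ with initial value $r_{0}$. A direct induction for $p\geq1$ yields $r_{p}=A^{p}r_{0}+\sum_{k=1}^{p}A^{k}c$, which is exactly \eqref{equ:pre}. For $p\leq-1$ I would invert the recursion as $r_{p-1}=A^{-1}r_{p}-c$ and descend from $r_{0}$, obtaining $r_{p}=A^{p}r_{0}-\sum_{k=0}^{-p-1}A^{-k}c$; reindexing the sum by $k\mapsto k-1$ turns this into \eqref{equ:pre2}. Both computations use only that $A=[g_{j+1}]$ is linear and invertible.

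I expect the only genuinely delicate points to be the two structural facts that make the morphism property usable: that $\mu_{j,p-1}$ is $G_{0}$-invariant, so that Lemma \ref{lem:mor} is applicable to $\rho_{\mu_{j,p-1}}$, and that passing from $h\in Z^{(k+1)}(G)$ to $[g_{j+1}^{-1},h]$ lowers the central-series level to $k$, so that the hypothesis \eqref{equ:rotd} can be invoked precisely for the commutator term. Once these are secured, the rest is careful bookkeeping with commutator and lift conventions and the solution of an affine recursion.
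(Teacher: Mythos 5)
Your proof is correct, and its core is the same one-step conjugation identity that drives the paper's argument: writing \,$\tilde{g}_{j+1}^{-1}\circ\tilde{h}\circ\tilde{g}_{j+1}=[\,\tilde{g}_{j+1}^{-1},\tilde{h}\,]\circ\tilde{h}$, applying Lemmas \ref{rem:pact} and \ref{lem:mor} to the \,$G_{0}$-invariant measure \,$\mu_{j,p-1}$, and using hypothesis \eqref{equ:rotd} on the commutator (which drops to \,$Z^{(k)}(G)\cap G_{0}$) to freeze the inhomogeneous term. For \,$p\geq 1$\, your solution of the affine recursion \,$r_{p}=Ar_{p-1}+Ac$\, is exactly the paper's induction in closed form. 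The one genuine divergence is the case \,$p\leq -1$: the paper applies the positive-$p$ formula to \,$g_{j+1}^{-1}$, which produces the commutator \,$[\,\tilde{g}_{j+1},\tilde{h}\,]$\, and then requires the additional identity \,$\rho_{\mu_{j}}\big([\,\tilde{g}_{j+1},\tilde{h}\,]\big)=-[g_{j+1}]\big(\rho_{\mu_{j}}\big([\,\tilde{g}_{j+1}^{-1},\tilde{h}\,]\big)\big)$, derived via \,$\mu_{n}$\, and \eqref{equ:rotd}; you instead observe that the same recursion is valid for every \,$p\in\mathbb{Z}$\, and solve it downward, which avoids that extra computation and is slightly more economical. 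Your two structural checks are precisely the points that make the machinery applicable: the \,$G_{0}$-invariance of each \,$\mu_{j,q}$\, (already guaranteed by the construction, since \,$\mu_{j,q}$\, is \,$\langle G^{j},\phi\rangle$-invariant and \,$G_{0}\subset G^{j}$) legitimizes Lemma \ref{lem:mor}, and the drop in central-series level under commutation with \,$g_{j+1}^{-1}$\, legitimizes the use of \eqref{equ:rotd}. Nothing is missing.
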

\begin{proof}
Fix \,$0 \leq j < n$. First we will
show Equation \eqref{equ:pre}   by induction on \,$p$.
The case \,$p=1$\, is a consequence of  Lemmas \ref{rem:pact} and \ref{lem:mor};
indeed we obtain
\begin{align}%\label{}
\rho_{\mu_{j,1}}\big(\tilde{h}\big)  &   =
\rho_{(g_{j+1})_{*}(\mu_{j})}\big(\tilde{h}\big)=[g_{j+1}]\big(\rho_{\mu_{j}}\big(
\tilde{g}_{j+1}^{-1}\circ \tilde{h} \circ \tilde{g}_{j+1}\big)\big) \notag \\
 &  =  [g_{j+1}]\big(\rho_{\mu_{j}}\big(\tilde{h}\big)\big) +
 [g_{j+1}]\big(\rho_{\mu_{j}}\big(
[\,\tilde{g}_{j+1}^{-1} \,, \tilde{h}\,]\big)\big) \notag
\end{align}
for any \,$h \in Z^{(k+1)}(G) \cap G_{0}$\,.
Let us remind the reader
that \,$(g^{m}_{j+1})_{*}(\mu_{j})$\, is \,$\langle G_{0}, \phi\rangle$-invariant
for all \,$m\in\mathbb{Z}$\,. Moreover, if
\,$h \in Z^{(k+1)}(G) \cap G_{0}$\, then
\,$[\,g^{-1}_{j+1}\,,h\,] \in G_{0}$\, since
$G/G_{0}$ is abelian.
Suppose the result holds for some integer \,$p\geq 1$\,. Given
\,$h \in Z^{(k+1)}(G) \cap G_{0}$\, we have
\begin{align}%\label{}
 \rho_{\mu_{j,p+1}}\big(\tilde{h}\big)  &   =
\rho_{(g_{j+1}^{p+1})_{*}(\mu_{j})}\big(\tilde{h}\big)=[g_{j+1}]\big(\rho_{(g_{j+1}^{p})_{*}(\mu_{j})}\big(
\tilde{g}_{j+1}^{-1}\circ \tilde{h} \circ \tilde{g}_{j+1}\big)\big) \notag \\
 &  =  [g_{j+1}]\big(\rho_{(g_{j+1}^{p})_{*}(\mu_{j})}\big([\,\tilde{g}_{j+1}^{-1} \,, \tilde{h}\,]\big)\big) +
 [g_{j+1}]\big(\rho_{(g_{j+1}^{p})_{*}(\mu_{j})}\big(\tilde{h}\big)\big) \notag   \\
    &  =  [g_{j+1}]\big(\rho_{\mu_{j,p}}\big([\,\tilde{g}_{j+1}^{-1} \,, \tilde{h}\,]\big)\big) +
 [g_{j+1}]\big(\rho_{\mu_{j,p}}\big(\tilde{h}\big)\big)  \notag
\end{align}
by Lemmas \ref{rem:pact} and \ref{lem:mor}. The induction hypothesis gives the
equality
\begin{align}%\label{}
\rho_{\mu_{j,p+1}}\big(\tilde{h}\big) & =
[g_{j+1}]\Big([g_{j+1}^{p}]\big(\rho_{\mu_{j}}\big(\tilde{h}\big)\big) + \sum_{k=1}^{p}
\, [g_{j+1}^{k}]\big(\rho_{\mu_{j}}\big([\,\tilde{g}_{j+1}^{-1} \,, \tilde{h}\,]\big)\big)\Big) \notag  \\
& \hskip10pt + [g_{j+1}]\big(\rho_{\mu_{j,p}}\big([\,\tilde{g}_{j+1}^{-1} \,, \tilde{h}\,]\big)\big) \notag  \\
&  =  [g_{j+1}^{p+1}]\big(\rho_{\mu_{j}}\big(\tilde{h}\big)\big) +
[g_{j+1}]\big(\rho_{\mu_{j,p}}\big([\,\tilde{g}_{j+1}^{-1} \,, \tilde{h}\,]\big)\big) +
\sum_{k=2}^{p+1}
\, [g_{j+1}^{k}]\big(\rho_{\mu_{j}}\big([\,\tilde{g}_{j+1}^{-1} \,, \tilde{h}\,]\big)\big). \notag \end{align}
Since \,$[\,g_{j+1}^{-1}\,,h\,] \in Z^{(k)}(G) \cap G_{0}$\, it follows from hypothesis (\ref{equ:rotd}) that
$$\rho_{\mu_{j,p}}\big([\,\tilde{g}_{j+1}^{-1} \,, \tilde{h}\,]\big)=
\rho_{\mu_{j}}\big([\,\tilde{g}_{j+1}^{-1} \,, \tilde{h}\,]\big).$$
Thus, returning to the last expression of
\,$\rho_{\mu_{j,p+1}}\big(\tilde{h}\big)$\, we obtain Equation \eqref{equ:pre} for the case
\,$p+1$\, and the proof of \eqref{equ:pre} is finished.

Now, we apply Equation (\ref{equ:pre}) to \,$g_{j+1}^{-1}$\, and we obtain\,:
\begin{align}\label{apli:cas:ant}
\rho_{\mu_{j,p}}\big(\tilde{h}\big) =
[g_{j+1}]^{p}\big(\rho_{\mu_{j}}\big(\tilde{h}\big)\big) + \sum_{k=1}^{-p} \, [g_{j+1}]^{-k}
\big(\rho_{\mu_{j}}\big(\big[\,\tilde{g}_{j+1} \,,
\tilde{h}\hskip1pt\big]\big)\big)
\end{align}
for all \,$p\leq -1$\, and
\,$h\in  Z^{(k+1)}(G) \cap G_{0}$\,. Returning to the hypothesis
\eqref{equ:rotd} and remembering that \,$[\,{g}_{j+1}\,,{h}\,]\in Z^{(k)}(G) \cap G_{0}$\, we
conclude by using  Lemmas \ref{rem:pact} and \ref{lem:mor} that\,:

\begin{align}%\label{}
 \rho_{\mu_{j}}\big([\,\tilde{g}_{j+1} \,, \tilde{h}\,]\big)
 & =  \rho_{\mu_{n}}\big(\tilde{g}_{j+1} \circ \tilde{h} \circ \tilde{g}_{j+1}^{-1}
 \circ \tilde{h}^{-1} \big)=
 \rho_{\mu_{n}}\big(\tilde{g}_{j+1} \circ \tilde{h} \circ \tilde{g}_{j+1}^{-1}
 \big) -  \rho_{\mu_{n}}\big(\tilde{h} \big) \notag  \\
 & = -[g_{j+1}]\Big( [g_{j+1}^{-1}]\big( \rho_{\mu_{n}}\big(\tilde{h} \big)\big) -
[g_{j+1}^{-1}] \big(\rho_{\mu_{n}}\big(\tilde{g}_{j+1} \circ \tilde{h} \circ \tilde{g}_{j+1}^{-1}
 \big)\big)\Big)     \notag  \\
 & = - [g_{j+1}]\Big( \rho_{\mu_{n}}\big(\tilde{g}_{j+1}^{-1} \circ \tilde{h} \circ \tilde{g}_{j+1}
 \big)  -   \rho_{\mu_{n}}\big(\tilde{h} \big)\Big)   \notag  \\
 & =   - [g_{j+1}]\big( \rho_{\mu_{n}}\big([\,\tilde{g}_{j+1}^{-1} \,, \tilde{h}\,] \big) \big)=
  - [g_{j+1}]\big( \rho_{\mu_{j}}\big([\,\tilde{g}_{j+1}^{-1} \,, \tilde{h}\,] \big) \big) \,.      \notag
\end{align}
From the above equality and Equation \eqref{apli:cas:ant} we conclude Equation \eqref{equ:pre2}.
\end{proof}

Rotation numbers can vary along the process if we do not impose any condition.
Roughly speaking Condition $(*)$ implies
that the sequence of rotation numbers is either constant or unbounded.
We obtain invariance of rotation numbers since clearly the latter situation is impossible.

%%%%%%%%%%%%%%%%%%%%%%%%%%%%%%%%%%
%%%%%%%%%%%%%%%%%%%%%%%%%%%%%%%%%%
\vskip10pt

\begin{lem}
\label{lem:rinv}
Consider
\,$\{g_{1},\ldots,g_{n}\}$\,  satisfying Condition $(*)$\,.
Fix \,$k \geq 0$\, and  suppose
\begin{equation}
\label{equ:que1}
\rho_{\mu_{j}}({\tilde{g}}) =
\rho_{\mu_{j \!,q}}({\tilde{g}}) =
\rho_{\nu_{j \!,p}}({\tilde{g}})
= \rho_{\mu_{n}}({\tilde{g}})
\end{equation}
for all \,$0 \leq j <n$ \ , \ $q \in {\mathbb Z}$ \ , \
$p \in {\mathbb Z}^{+}$\,
and \,$g \in Z^{(k)}(G) \cap G_{0}$\,.
Then we obtain
\begin{equation}
\label{equ:que2}
\rho_{\mu_{j}}({\tilde{h}}) =
\rho_{\mu_{j \!,q}}({\tilde{h}}) =
\rho_{\nu_{j \!,p}}({\tilde{h}})
= \rho_{\mu_{n}}({\tilde{h}})
\end{equation}
for all \,$0 \leq j <n$ \ , \ $q \in {\mathbb Z}$ \ , \
$p \in {\mathbb Z}^{+}$\,
and \,$h \in Z^{(k+1)}(G) \cap G_{0}$\,.
\end{lem}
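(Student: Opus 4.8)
The plan is to prove Lemma \ref{lem:rinv} by leveraging Lemma \ref{lem:rotev}, whose hypothesis \eqref{equ:rotd} is exactly the inductive hypothesis \eqref{equ:que1} restricted to $Z^{(k)}(G) \cap G_0$. Fix $0 \leq j < n$ and $h \in Z^{(k+1)}(G) \cap G_0$. Since $[\,g_{j+1}^{-1},h\,] \in Z^{(k)}(G) \cap G_0$, the hypothesis \eqref{equ:que1} guarantees that the rotation vector $\rho_{\mu_j}\big(\big[\,\tilde{g}_{j+1}^{-1},\tilde{h}\,\big]\big)$ is a well-defined constant independent of $j$ and of the intermediate measures; call it $w$. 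Plugging this into formula \eqref{equ:pre} of Lemma \ref{lem:rotev}, I would obtain the closed form
\[
\rho_{\mu_{j,p}}\big(\tilde{h}\big) = [g_{j+1}]^{p}\big(\rho_{\mu_{j}}(\tilde{h})\big) + \sum_{k=1}^{p} [g_{j+1}]^{k}(w)
\]
for $p \geq 1$, and analogously from \eqref{equ:pre2} for $p \leq -1$. The whole point of Condition $(*)$ enters here: it controls the long-term behavior of the partial sums $\sum_{k=1}^{p} [g_{j+1}]^{k}(w)$ as $p \to \pm\infty$.

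The crux of the argument is the dichotomy suggested in the remark preceding the lemma: \emph{rotation numbers are either constant along the process or unbounded, and unboundedness is impossible}. To make this precise I would analyze the orbit $\{\rho_{\mu_{j,p}}(\tilde h)\}_{p \in \mathbb{Z}}$ using the eigenvalue structure of the matrix $A = [g_{j+1}] \in \mathrm{GL}(2,\mathbb{Z})$ catalogued in Remark \ref{rem:spec:01}. Condition $(*)$ forbids $A$ from having a root of unity (other than $1$) in its spectrum, which by that remark leaves only a few cases: $A$ has $1$ as an eigenvalue (so $A$ is the identity or a Dehn twist, or $\mathrm{tr}=0,\det=-1$), or $A$ is hyperbolic with one eigenvalue of modulus $>1$ and one of modulus $<1$. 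In the hyperbolic case the partial sums $\sum_{k=1}^{p}A^{k}(w)$ and the leading term $A^{p}(\rho_{\mu_j}(\tilde h))$ would grow without bound unless $w$ and $\rho_{\mu_j}(\tilde h)$ lie in a suitable eigenspace forcing the sum to stay constant; but all these rotation vectors live in the bounded set $\rho(\tilde h)$ (which is compact by the Misiurewicz--Ziemian theory cited in the introduction), so unboundedness is genuinely impossible. This compactness is what converts ``bounded for all $p$'' into ``constant in $p$''. The remaining non-hyperbolic cases ($A$ unipotent or of the special finite-order-free types allowed by $(*)$) have to be treated separately, showing in each that boundedness of $\{\rho_{\mu_{j,p}}(\tilde h)\}_p$ forces $w=0$ in the relevant direction and hence $\rho_{\mu_{j,p}}(\tilde h) = \rho_{\mu_j}(\tilde h)$ for all $p$.

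Once I have established $\rho_{\mu_{j,p}}(\tilde h) = \rho_{\mu_j}(\tilde h)$ for all $p \in \mathbb{Z}$, the equality with $\rho_{\nu_{j,p}}(\tilde h)$ follows immediately by averaging: since $\nu_{j,\ell} = \frac{1}{\ell}\sum_{p=0}^{\ell-1}\mu_{j,p}$ and $\rho_{\nu}(\tilde h)$ is affine (indeed linear) in $\nu$ by the integral formula defining $\rho_\mu$ together with Lemma \ref{lem:mor}, we get $\rho_{\nu_{j,\ell}}(\tilde h) = \frac{1}{\ell}\sum_{p=0}^{\ell-1}\rho_{\mu_{j,p}}(\tilde h) = \rho_{\mu_j}(\tilde h)$. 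Passing to the weak$^{*}$ limit that defines $\mu_{j+1}$ and using continuity of $\nu \mapsto \rho_\nu(\tilde h) = \int(\tilde h - Id)\,d\nu$ then yields $\rho_{\mu_{j+1}}(\tilde h) = \rho_{\mu_j}(\tilde h)$, which chains across $j = 0,1,\ldots,n-1$ to give $\rho_{\mu_j}(\tilde h) = \rho_{\mu_n}(\tilde h)$ and completes \eqref{equ:que2}.

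The main obstacle I anticipate is the case analysis driven by Remark \ref{rem:spec:01}: the unipotent (Dehn-twist) case and the case $\mathrm{tr}(A)=0,\det(A)=-1$ (where $A$ has eigenvalues $\pm 1$) require care, because there the iterates $A^p$ do not decay, so one cannot simply invoke contraction. In those cases the boundedness-forces-constancy argument is more delicate — one must decompose $w$ and $\rho_{\mu_j}(\tilde h)$ along the $\pm 1$-eigenspaces and check that the linear-in-$p$ or oscillating contributions to the partial sums are incompatible with membership in a bounded (compact) set, so that the offending components must vanish. Getting this bookkeeping right across both signs of $p$, while keeping the induction on $k$ and the fixed index $j$ consistent, is where the real work lies; the averaging and limiting steps at the end are routine by contrast.
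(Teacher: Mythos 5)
Your proposal is correct and follows the same overall architecture as the paper's proof: feed hypothesis \eqref{equ:que1} into Lemma \ref{lem:rotev}, run a case analysis on $A=[g_{j+1}]$ dictated by Condition $(*)$ and Remark \ref{rem:spec:01}, use boundedness of the rotation set of $\tilde{h}$ to force $\rho_{\mu_{j,p}}(\tilde{h})=\rho_{\mu_{j}}(\tilde{h})$ for all $p\in\mathbb{Z}$, and then average and pass to the weak$^{*}$ limit. The identity and Dehn-twist cases you describe are exactly the paper's. The one genuine divergence is the hyperbolic case: the paper does \emph{not} argue directly on $A^{p}v+\sum_{k=1}^{p}A^{k}(w)$; it first invokes Proposition \ref{pro:bffi} to get $\rho_{\mu_{n}}(\tilde{h}^{\tau})\in\mathbb{Z}^{2}$ for some power $\tau$, replaces $\tilde{h}$ by $T_{-u}\circ\tilde{h}^{\tau}$ so that the commutator term vanishes identically, and is then left with the clean orbit $A^{q}\big(\rho_{\mu_{j}}(T_{-u}\circ\tilde{h}^{\tau})\big)$, whose boundedness under a hyperbolic $A$ forces the vector to be zero. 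Your direct route also works, but the step you gloss over is the substance: boundedness as $p\to+\infty$ only pins down the expanding component (giving $v_{+}=-\tfrac{\lambda}{\lambda-1}w_{+}$), and one must separately use \eqref{equ:pre2} and boundedness as $p\to-\infty$ to pin down the contracting component; only the two relations together make the sequence constant rather than merely bounded and convergent. Two small corrections: the case $\mathrm{tr}(A)=0$, $\det(A)=-1$ that you list has spectrum $\{1,-1\}$ and is therefore excluded by Condition $(*)$ (so the only non-hyperbolic possibilities are $A=Id$ and a Dehn twist), and in the unipotent case the analysis is a direct linear/quadratic growth computation rather than an eigenspace decomposition. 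With those details filled in, your argument is complete and in the hyperbolic subcase is arguably more self-contained than the paper's, since it avoids Proposition \ref{pro:bffi}.
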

\vskip5pt
%%%%%%%%%%%%%%%%%%%%%%%%%%%%%%%%%%
%%%%%%%%%%%%%%%%%%%%%%%%%%%%%%%%%%

\begin{proof}
Fix \,$0 \leq j < n$\, and let \,$h \in Z^{(k+1)}(G) \cap G_{0}$\,.
We will prove that
\,$ \rho_{\mu_{j,q}}(\tilde{h}) = \rho_{\mu_{j}}(\tilde{h})$\,
for any \,$q \in {\mathbb Z}$. For this, we recall
that the hypothesis on \,$[g_{j+1}]$\, implies that the
mapping class \,$[g_{j+1}]$\, is either trivial, equal to a
Dehn twist or \,$1 \notin \text{spec}[g_{j+1}]$\, by Remark \ref{rem:spec:01}.

Suppose that \,$[g_{j+1}]=Id$. Notice that
\[\rho_{\mu_{j,q}}\big(\tilde{h}\big) =
\rho_{\mu_{j}}\big(\tilde{h}\big) + q
\rho_{\mu_{j}}\big(\big[\,\tilde{g}_{j+1}^{-1} \,, \tilde{h}\hskip1pt\big]\big) \]
for all $h \in Z^{(k+1)}(G) \cap G_{0}$ and $q \in {\mathbb Z}$ by Equations
(\ref{equ:pre}) and (\ref{equ:pre2}). Since the set of rotation vectors of $\tilde{h}$ is bounded,
it follows $\rho_{\mu_{j}}\big([\,\tilde{g}_{j+1}^{-1}\,,\tilde{h}\,]\big)=(0,0)$.
Thus $\rho_{\mu_{j,q}}\big(\tilde{h}\big) =\rho_{\mu_{j}}\big(\tilde{h}\big)$ for any $q \in {\mathbb Z}$.

Suppose that \,$[g_{j+1}]$\, is a (non-trivial) Dehn twist.
Up to a change of coordinates in
\,$\mathrm{GL}(2\,,{\mathbb Z})$\, we can suppose
\[ [g_{j+1}] = \left(
\begin{array}{cc}
1& 0 \\
m & 1 \\
\end{array}
\right)   \quad \text{for some} \quad 0 \neq m \in {\mathbb Z}. \]
Thus, in these new coordinates, \,$\rho_{\mu_{j,q}}\big(\tilde{h}\big)$\, takes the form\,:
\[ \rho_{\mu_{j,q}}\big(\tilde{h}\big) = \big(v_{1}, m qv_{1} + v_{2}\big) +
\big(q w_{1}, (1 + \cdots + q) m w_{1} +q w_{2}\big) \quad \text{for all}
\quad q \in {\mathbb Z}^{+} \]
where \,$\rho_{\mu_{j}}\big(\tilde{h}\big)=(v_{1}\,,v_{2})$\, and
\,$\rho_{\mu_{j}}\big([\,\tilde{g}_{j+1}^{-1}\,,\tilde{h}\,]\big)=(w_{1}\,,w_{2})$\,
by Lemma \ref{lem:rotev}.
Since the set of rotation vectors of \,$\tilde{h}$\, is bounded,
we deduce \,$w_{1}=0$\, and
\[ \rho_{\mu_{j,q}}(\tilde{h}) = (v_{1}, q(m v_{1} + w_{2}) + v_{2}) \quad \text{for all}
\quad q \in {\mathbb Z}^{+}. \]
Again, the boundness of the set of rotation vectors of \,$\tilde{h}$\, guarantee that  \,$m v_{1} + w_{2}=0$\,  and then  we have
\,$\rho_{\mu_{j,q}}(\tilde{h}) = \rho_{\mu_{j}}(\tilde{h})$\,
for any \,$q \in {\mathbb Z}^{+}$. Similar arguments with Equation (\ref{equ:pre2})
give us \,$\rho_{\mu_{j,q}}(\tilde{h}) = \rho_{\mu_{j}}(\tilde{h})$\, for any integer \,$q<0$\, and then we obtain \,$\rho_{\mu_{j,q}}(\tilde{h}) = \rho_{\mu_{j}}(\tilde{h})$\, for all \,$q\in \mathbb{Z}$.

Suppose finally that \,$1 \not \in \mathrm{spec}[g_{j+1}]$\,.
From Proposition \ref{pro:bffi} there exists \,$\tau \in \mathbb{Z}^{+}$\, such that
\,$h^{\tau} \in G_{\mathcal{I}}=G^{\mu_{n}}_{\mathcal{I}}$\, and hence
\,$\rho_{\mu_{n}}\big(\tilde{h}^{\tau}\big) \in \mathbb{Z}^{2}$. On the other hand
\,$[\,{g}_{j+1}^{-1}\,,{h}\,] \in Z^{(k)}(G) \cap G_{0}$\, and we conclude
\begin{align}\label{equ:tri:01}
\rho_{\mu_{j}}\big([\,\tilde{g}_{j+1}^{-1}\,,\tilde{h}\,]\big)
& = %\rho_{\mu_{n}}\big([\,\tilde{g}_{j+1}^{-1}\,,\tilde{h}^{s}\,]\big) =
\rho_{\mu_{n}}\big(\tilde{g}_{j+1}^{-1} \circ \tilde{h} \circ \tilde{g}_{j+1}
\circ \tilde{h}^{-1} \big) \notag \\
& = [g_{j+1}]^{-1}\big( \rho_{\mu_{n}}\big(\tilde{h}\big)\big) -
\rho_{\mu_{n}}\big(\tilde{h}\big)  %\quad \text{for all} \quad s\in\mathbb{Z}
\end{align}
where the first equality comes from \eqref{equ:que1} whereas the second is consequence of
Lemmas \ref{rem:pact} and \ref{lem:mor}.
%Consequently, taking \,$s=\tau$\, in \eqref{equ:tri:01} we have that\,:
%$$\tau\rho_{\mu_{j}}\big([\,\tilde{g}_{j+1}^{-1}\,,\tilde{h}\,]\big)
%= [g_{j+1}]^{-1}\big( \rho_{\mu_{n}}\big(\tilde{h}^{\tau}\big)\big) -
%\rho_{\mu_{n}}\big(\tilde{h}^{\tau}\big) \quad \text{where}
%\quad u=\rho_{\mu_{n}}\big(\tilde{h}^{\tau}\big)\in \mathbb{Z}^{2}.$$
Now, let \,$u=\rho_{\mu_{n}}\big(\tilde{h}^{\tau}\big)$. Replacing \,$\tilde{h}$\, with
\,$T_{-u}\circ\tilde{h}^{\tau}$\, in \eqref{equ:tri:01} we obtain
\begin{align}%\label{equ:tri}
\rho_{\mu_{j}}\big([\,\tilde{g}_{j+1}^{-1}\,,T_{-u}\circ\tilde{h}^{\tau}\,]\big)
%& = %\rho_{\mu_{n}}\big([\,\tilde{g}_{j+1}^{-1}\,,\tilde{h}^{s}\,]\big) =
%\rho_{\mu_{n}}\big(\tilde{g}_{j+1}^{-1} \circ \tilde{h}^{\tau} \circ \tilde{g}_{j+1}
%\circ \tilde{h}^{-\tau} \big) \notag \\
& = [g_{j+1}]^{-1}\big( \rho_{\mu_{n}}\big(T_{-u}\circ\tilde{h}^{\tau}\big)\big) -
\rho_{\mu_{n}}\big(T_{-u}\circ\tilde{h}^{\tau}\big)  \notag \\
&  =  [g_{j+1}]^{-1}\big( -u+\rho_{\mu_{n}}\big(\tilde{h}^{\tau}\big)\big)
+  \big(u- \rho_{\mu_{n}}\big(\tilde{h}^{\tau}\big)\big)=(0\,,0). \notag
\end{align}
By plugging the last expression in Equations
\eqref{equ:pre} and \eqref{equ:pre2} we obtain
$$\rho_{\mu_{j,q}}\big(T_{-u}\circ\tilde{h}^{\tau}\big)=[g_{j+1}]^{q}
\big(\rho_{\mu_{j}}\big(T_{-u}\circ\tilde{h}^{\tau}\big)\big)
\quad \text{for all} \quad q\in \mathbb{Z}.$$
Again, the boundness of the set of rotation vectors \,$\rho(T_{-u}\circ \tilde{h}^{\tau})$\, allow us to deduce that the set
\,$\big\{ [g_{j+1}]^{q}
\big(\rho_{\mu_{j}}\big(T_{-u}\circ\tilde{h}^{\tau}\big)\big)\big\}_{q\in\mathbb{Z}}$\,
is bounded.

 Moreover, Remark \ref{rem:spec:01} and the  hypothesis on
\,$\mathrm{spec}[g_{j+1}]$\, imply that
\,$[g_{j+1}]$\, has
an eigenvalue of modulus greater than \,$1$\, and
an eigenvalue of modulus less than \,$1$\,.
These properties are compatible only if
\,$\rho_{\mu_{j}}\big(T_{-u}\circ\tilde{h}^{\tau}\big) =0$\,.
In such a case we have \,$\rho_{\mu_{j,q}}\big(T_{-u}\circ\tilde{h}^{\tau}\big)=
\rho_{\mu_{j}}\big(T_{-u}\circ\tilde{h}^{\tau}\big)$\, and then
$$\rho_{\mu_{j,q}}\big(\tilde{h}\big)=
\rho_{\mu_{j}}\big(\tilde{h}\big) \quad \text{for any} \quad q\in \mathbb{Z}\,.$$

As a consequence we obtain \,$\rho_{\nu_{j,p}}\big(\tilde{h}\big) =
\rho_{\mu_{j}}\big(\tilde{h}\big)$\,
for any \,$p \in \mathbb{Z}^{+}$\,
and then \,$\rho_{\mu_{j+1}}\big(\tilde{h}\big) =\rho_{\mu_{j}}\big(\tilde{h}\big)$.
We are done by making \,$j$\, vary in \,$\{0,\ldots,n-1\}$.
\end{proof}

\vskip10pt

Next we identify the nilpotent subgroups of \,$\mathrm{MCG}({\mathbb T}^{2})$\,
that are compatible with the existence of generator sets satisfying
Condition $(*)$.
First we consider the following Condition $(**)$ on
\,$G \subset \textrm{Homeo}(\mathbb{T}^{2})$\,:

\vskip5pt

$(**)$ \quad \parbox[]{.88\textwidth}{%
$[G]$\, has one of the  following forms\,:
\,$[G]=\{Id\}$ \ , \ $[G]=\langle D \rangle$\, where \,$D$\, is a (non-trivial) Dehn twist \ , \
\,$[G]=\langle A \rangle$ \ or \ $[G]= \langle A \,,-Id \, \rangle$ \ where
\ $\mathrm{spec}(A)$\, does not contain any eigenvalue of modulus \,$1$.
}%%%parbox

\vskip5pt

Now we state the following remark.

%%%%%%%%%%%%%%%%%%%%%%%%%%%%%%%%%%
\vskip10pt

\begin{rem}\label{cen:ger:set:02}
Let \,$G \subset \textrm{Homeo}(\mathbb{T}^{2})$\, be a nilpotent group
and suppose that \,$G$\, satisfies Condition $(**)$.
We can construct  a set \,${\mathcal S}:=\{g_{1},\ldots,g_{n}\}$\,
such that \,$G = \langle G_{0}, g_{1}, \ldots, g_{n} \rangle$\, and \,${\mathcal S}$\, satisfies
Condition $(*)$.

The remark is trivial if \,$[G]=\{Id\}$\, or
\,$[G]=\langle D \rangle$\, where \,$D$\, is a (non-trivial) Dehn twist
since then \,$\textrm{spec}[g]=\{1\}$\, for all \,$g\in G$.

\vglue5pt
\noindent
Let us consider the case \,$[G]=\langle A \rangle$.
It suffices to define \,$n=1$\, and to take \,$g_{1} \in G$\, such that
\,$[g_{1}]=A$.

\vglue5pt
\noindent
Consider now the case \,$[G]=\langle A\,, -Id \rangle$.
We define \,$n=2$\, and elements \,$g_{1}, g_{2} \in G$\, such that \,$[g_{1}]=A$\, and
\,$[g_{2}]=-A$.
\end{rem}
%\vskip5pt
%%%%%%%%%%%%%%%%%%%%%%%%%%%%%%%%%%

%%%%%%%%%%%%%%%%%%%%%%%%%%%%%%%%%%
%%%%%%%%%%%%%%%%%%%%%%%%%%%%%%%%%%
\vskip5pt

In the next remark we describe the  nilpotent subgroups
of \,$\mathrm{Homeo}({\mathbb T}^{2})$\,
that do not satisfy Condition $(**)$.

%%%%%%%%%%%%%%%%%%%%%%%%%%%%%%%%%%
%%%%%%%%%%%%%%%%%%%%%%%%%%%%%%%%%%
\vskip5pt
\begin{rem}
\label{rem:big}
 Let \,$G$\, be a nilpotent subgroup
of \,$\mathrm{Homeo}({\mathbb T}^{2})$.
 Following Remark \ref{rem:spec:01} it is very simple to conclude that if
\,$[G]\neq \{Id\}$\, is finite then \,$G$\, does not satisfy neither Condition $(*)$ nor Condition $(**)$.
Suppose that \,$[G]$\,  is an infinite group that does not satisfy Condition $(**)$.
It is of the form \,$\langle N \rangle$\, or
\,$\langle N ,-Id \rangle$\ by Lemma \ref{lem:nmcg}, where \,$Id \neq N \in \mathrm{GL}(2\,,\mathbb{Z})$\,
has an eigenvalue of modulus equal to \,$1$.
Notice that the eigenvalues of \,$N$\, are real numbers, otherwise
\,$[G]$\, is finite by Remark \ref{rem:spec:01}. Analogously we deduce \,$\text{spec}(N) \neq  \{1\,,-1\}$.
We obtain
%%%
%
%
%
\begin{itemize}
\item
If \,$\text{spec}(N)=\{1\}$\,  then \,$N$\, is a Dehn twist\,;
\item
If \,$\text{spec}(N)=\{-1\}$\,  then   \,$N=-D$\, where \,$D$\, is a Dehn twist.
\end{itemize}

 Consequently, the nilpotent subgroups \,$G$\, of \,$\mathrm{Homeo}({\mathbb T}^{2})$\,
that do not satisfy Condition $(**)$
have one of the forms:
\,$[G]$\, is a non-trivial finite group, \,$[G]= \langle -D \rangle$\,
or \,$[G]=\langle D,-Id \rangle$\, for a (non-trivial) Dehn twist \,$D$.

In these cases the set \,$S$\, of elements of \,$[G]$\, that do not have
non-trivial roots of the unit as eigenvalues is a proper finite index normal
subgroup of \,$[G]$. Hence \,$G$\, does not satisfy
Condition $(*)$ and our construction can not be applied. The previous discussion, together
with Remark \ref{cen:ger:set:02}, imply that
Condition $(*)$ and Condition $(**)$ are equivalent.

Notice that \,$S=[G]$\, for all the groups that satisfy Condition $(**)$
except \,$[G]=\langle A \,,-Id \rangle$\, where \,$S=[G] \setminus \{-Id\}$\,
and \,$\langle S \rangle = [G]$.
\end{rem}
%%%%%%%%%%%%%%%%%%%%%%%%%%%%%%%%%%
%%%%%%%%%%%%%%%%%%%%%%%%%%%%%%%%%%
\vskip5pt

\begin{pro}
\label{pro:bigsma}
Let \,$G$\, be a
nilpotent subgroup of \,$\mathrm{Homeo}({\mathbb T}^{2})$\,
and suppose that \,$G$\, satisfies Condition $(**)$\,.
Given \,$\phi \in G_{0}$\, and  \,$\mu \in \mathcal{P}(\phi)$\,
there exists \,$\nu \in {\mathcal P}(G)$\, such that
\,$\rho_{\mu}(\tilde{\phi})=\rho_{\nu}(\tilde{\phi})$\, for any lift \,$\tilde{\phi}$\,
of \,$\phi$\,. Moreover if $\mu$ is $G_{0}$-invariant we have
$\rho_{\mu}(\tilde{\psi})= \rho_{\nu}(\tilde{\psi})$\, for any lift \,$\tilde{\psi}$\,
of every \,$\psi \in G_{0}$\,.
\end{pro}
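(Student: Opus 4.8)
The plan is to feed the given data into the inductive measure-construction of Subsection \ref{subsec:cons} and to show, through Lemmas \ref{lem:rotev} and \ref{lem:rinv}, that the construction preserves the rotation vector of every element of $G_0$; the terminal measure $\mu_n$ will then be the desired $\nu$. First I would normalize the starting measure: since $G_0$ is a nilpotent subgroup of $\mathrm{Homeo}_0(\mathbb{T}^2)$, the already-established case of Theorem \ref{teo:main6} for subgroups of $\mathrm{Homeo}_0(\mathbb{T}^2)$ lets me replace the given $\mu \in \mathcal{P}(\phi)$ by a $G_0$-invariant measure with the same value of $\rho_\mu(\tilde\phi)$ on one lift, hence — using $\rho_\mu(T_v\circ\tilde\phi)=T_v(\rho_\mu(\tilde\phi))$ from Lemma \ref{rem:pact} — on every lift. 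Thus I may assume $\mu\in\mathcal{P}(G_0)$. Since $G$ satisfies Condition $(**)$, Remark \ref{cen:ger:set:02} furnishes a set $\mathcal{S}=\{g_1,\dots,g_n\}$ with $\langle G_0,g_1,\dots,g_n\rangle=G$ satisfying Condition $(*)$; setting $\mu_0=\mu$ I then build the measures $\mu_j$, $\mu_{j,q}$ and $\nu_{j,p}$ exactly as in the construction method. By construction $\mu_j$ is $\langle G^j,\phi\rangle$-invariant, and since $g_0=Id$ and $\phi\in G_0$ the terminal measure $\nu:=\mu_n$ is $\langle G,\phi\rangle$-invariant, that is, $\nu\in\mathcal{P}(G)$.

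The core step is to establish
$$\rho_{\mu_j}(\tilde h)=\rho_{\mu_{j,q}}(\tilde h)=\rho_{\nu_{j,p}}(\tilde h)=\rho_{\mu_n}(\tilde h)$$
for all $0\le j<n$, all $q\in\mathbb{Z}$, all $p\in\mathbb{Z}^+$ and every $h\in G_0$, and I would obtain it by induction on the degree $k$ in the upper central series, with the inductive step being exactly Lemma \ref{lem:rinv}. The base case $k=0$ is immediate, because $Z^{(0)}(G)\cap G_0$ is trivial and the rotation vector of any lift $T_v$ of the identity equals $v$ for every probability measure. Granting the equalities on $Z^{(k)}(G)\cap G_0$, Lemma \ref{lem:rinv} propagates them to $Z^{(k+1)}(G)\cap G_0$. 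Because $G$ is nilpotent there is a $c$ with $Z^{(c)}(G)=G$, so $Z^{(c)}(G)\cap G_0=G_0$, and after $c$ applications the equalities hold for every $h\in G_0$; passing from the chosen lift to any other only shifts both sides by the same integer vector, so the conclusion is lift-independent.

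It remains to read off the statement at $j=0$, where $\mu_0=\mu$. For the fixed $\phi$ this gives $\rho_\mu(\tilde\phi)=\rho_{\mu_n}(\tilde\phi)=\rho_\nu(\tilde\phi)$ for every lift $\tilde\phi$, which is the first assertion, the normalization having altered $\mu$ only within a fixed rotation vector. For the \emph{moreover} clause, when $\mu$ is already $G_0$-invariant no normalization is needed, and applying the same chain at $j=0$ to an arbitrary $\psi\in G_0$ yields $\rho_\mu(\tilde\psi)=\rho_\nu(\tilde\psi)$ for every lift. I expect the single delicate point to be bookkeeping rather than analysis: checking that the hypotheses required by Lemma \ref{lem:rinv} at stage $k+1$ are precisely the equalities proved at stage $k$, so that the induction closes — the genuine analytic content, namely that Condition $(*)$ forces the a priori possibly-growing sequence of rotation vectors to stay bounded and hence constant, has already been absorbed into Lemmas \ref{lem:rotev} and \ref{lem:rinv}.
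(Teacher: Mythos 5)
Your proposal is correct and follows essentially the same route as the paper: normalize $\mu$ to be $G_{0}$-invariant via Theorem \ref{teo:main6} for subgroups of $\mathrm{Homeo}_{0}({\mathbb T}^{2})$, produce the generating set satisfying Condition $(*)$ from Remark \ref{cen:ger:set:02}, run the inductive measure construction of subsection \ref{subsec:cons}, and close the induction on the upper central series with Lemma \ref{lem:rinv} to get $\nu=\mu_{n}$. The only detail the paper makes explicit that you leave implicit is that Condition $(**)$ forces $[G]$ to be infinite and hence abelian (Remark \ref{rem:big} and Lemma \ref{lem:nmcg}), which is what licenses the construction method in the first place.
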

%%%%%%%%%%%%%%%%%%%%%%%%%%%%%%%%%%
%%%%%%%%%%%%%%%%%%%%%%%%%%%%%%%%%%

\begin{proof}
We can suppose that \,$\mu$\, is \,$G_{0}$-invariant by Theorem \ref{teo:main6}
for subgroups of
$\mathrm{Homeo}_{0}({\mathbb T}^{2})$.

Since $G$ satisfies Condition $(**)$, it follows that $[G]$ is
infinite by Remark \ref{rem:big} and thus abelian by Lemma \ref{lem:nmcg}.
Then we can apply
the construction of a $G$-invariant measure in subsection \ref{subsec:cons}.
There exists a
subset ${\mathcal S}:= \{g_{1}, \hdots, g_{n}\}$ of $G$ such that
$G = \langle G_{0}, g_{1},\hdots,g_{n} \rangle$ and ${\mathcal S}$ satisfies
Condition $(*)$ by Remark \ref{cen:ger:set:02}.
Equation (\ref{equ:que1}) in Lemma \ref{lem:rinv} holds trivially for $k=0$.
Moreover if it is satisfied for $k$ then so it is for $k+1$ by Lemma \ref{lem:rinv}.
We deduce
\,$\rho_{\mu}(\tilde{\psi})= \rho_{\nu}(\tilde{\psi})$\, for any lift \,$\tilde{\psi}$\,
of any \,$\psi \in G_{0}$\, where \,$\nu:= \mu_{n}$\, is the \,$G$-invariant
probability measure provided by Lemma \ref{lem:rinv}.
\end{proof}

%\textcolor{red}{In the next remark is an easy exercise that follows from
%Lemma \ref{lem:nmcg} and Remark \ref{rem:spec:01}.}

 Now, we compare our definitions of irrotationality
for nilpotent groups that satisfy Condition $(*)$.

%%%%%%%%%%%%%%%%%%%%%%%%%%%%%%%%%%
%%%%%%%%%%%%%%%%%%%%%%%%%%%%%%%%%%
\vskip10pt

\begin{pro}
\label{pro:bigsma2}
Let \,$G$\, be a nilpotent subgroup of \,$\mathrm{Homeo}({\mathbb T}^{2})$\,
and suppose that \,$G$\, satisfies Condition $(**)$\,.
%
%Suppose that \,$[G]$\, is of one of the following forms$\,:$
%\,$[G]=\{Id\}$ \ , \ $[G]=\langle D \rangle$\, where \,$D$\, is a non-trivial Dehn twist,
%\,$[G]=\langle A \rangle$\, or \,$[G]=\langle A \,,-Id \rangle$\, where \,$\mathrm{spec}(A)$\, does not contain
%any eigenvalue of modulus \,$1$.
%
%
Then we have \,${\mathcal I}_{G}=G_{\mathcal I}$.
\end{pro}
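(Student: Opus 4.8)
The plan is to deduce Proposition \ref{pro:bigsma2} almost directly from Proposition \ref{pro:bigsma}, treating it as a corollary rather than as an independent argument. First I would dispose of the easy inclusion: $\mathcal{I}_{G}\subset G_{\mathcal{I}}$ holds unconditionally and was already observed in the text. Indeed, since $\phi\in G_{0}\subset G$ we have $\mathcal{P}(G)\subset\mathcal{P}(\phi)$, so a lift $\tilde{\phi}$ with $\rho_{\mu}(\tilde{\phi})=(0,0)$ for all $\mu\in\mathcal{P}(\phi)$ automatically satisfies $\rho_{\nu}(\tilde{\phi})=(0,0)$ for all $\nu$ in the smaller set $\mathcal{P}(G)$. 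Thus the only content to be proved is the reverse inclusion $G_{\mathcal{I}}\subset\mathcal{I}_{G}$.

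Next I would fix an arbitrary $\phi\in G_{\mathcal{I}}$ together with a lift $\tilde{\phi}$ witnessing this membership, that is, one for which $\rho_{\nu}(\tilde{\phi})=(0,0)$ for every $\nu\in\mathcal{P}(G)$; here one records that $\mathcal{P}(G)$ is non-empty because a nilpotent group is amenable. The aim is then to show that this \emph{same} lift $\tilde{\phi}$ already satisfies $\rho_{\mu}(\tilde{\phi})=(0,0)$ for every $\mu\in\mathcal{P}(\phi)$, which is precisely the defining condition for $\phi\in\mathcal{I}_{G}$.

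To carry this out I would take an arbitrary $\mu\in\mathcal{P}(\phi)$ and apply Proposition \ref{pro:bigsma}, whose hypotheses are met since $G$ satisfies Condition $(**)$. It produces a measure $\nu\in\mathcal{P}(G)$ with $\rho_{\mu}(\tilde{\phi})=\rho_{\nu}(\tilde{\phi})$ for \emph{any} lift of $\phi$, in particular for the fixed lift $\tilde{\phi}$ above. Combining this equality with $\rho_{\nu}(\tilde{\phi})=(0,0)$ gives $\rho_{\mu}(\tilde{\phi})=(0,0)$. As $\mu\in\mathcal{P}(\phi)$ was arbitrary, the lift $\tilde{\phi}$ witnesses $\phi\in\mathcal{I}_{G}$, establishing $G_{\mathcal{I}}\subset\mathcal{I}_{G}$ and hence the desired equality $\mathcal{I}_{G}=G_{\mathcal{I}}$.

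I do not expect any genuine obstacle in this step: all the analytic work — the inductive construction of the $G$-invariant measure in subsection \ref{subsec:cons} and the rotation-vector control furnished by Condition $(*)$ and Lemma \ref{lem:rinv} — has already been packaged into Proposition \ref{pro:bigsma}. The only points requiring care are bookkeeping rather than mathematics: keeping track of the existential "there exists a lift" quantifier so that a single lift $\tilde{\phi}$ is used throughout both conditions, and noting that $\mathcal{P}(G)\neq\emptyset$ so that the witnessing condition for $G_{\mathcal{I}}$ is not vacuous.
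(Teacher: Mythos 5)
Your argument is correct and is essentially the paper's own proof: both reduce the statement to Proposition \ref{pro:bigsma}, which converts an arbitrary $\mu\in\mathcal{P}(\phi)$ into a $\nu\in\mathcal{P}(G)$ with the same rotation vector for the chosen lift. The only difference is presentational — you argue directly that the witnessing lift for $G_{\mathcal{I}}$ is already the irrotational lift, while the paper runs the same step as a proof by contradiction.
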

%%%%%%%%%%%%%%%%%%%%%%%%%%%%%%%%%%
%%%%%%%%%%%%%%%%%%%%%%%%%%%%%%%%%%

\begin{proof}
It is clear that \,${\mathcal I}_{G} \subset G_{\mathcal I}$.
Now, suppose there exists  \,$\phi \in G_{\mathcal{I}}$\, such that
\,$\phi \notin \mathcal{I}_{G}$.
Fix a lift \,$\tilde{\phi}$\, of \,$\phi$. Since
\,$\phi$\, is non-irrotational, the homeomorphism \,$\tilde{\phi}$\, is not
an irrotational lift of \,$\phi$. Thus there exists \,$\mu \in \mathcal{P}(\phi)$\,
such that \,$\rho_{\mu}(\tilde{\phi}) \neq (0,0)$.
There exists \,$\nu \in \mathcal{P}(G)$\, such that
\,$\rho_{\nu}\big(\tilde{\phi}\big)=\rho_{\mu}\big(\tilde{\phi}\big)\neq (0\,,0)$\,
by Proposition \ref{pro:bigsma}.

Thus we deduce that \,$\tilde{\phi} \notin \tilde{G}_{\mathcal I}^{\nu}$\,
and then  \,$\tilde{\phi} \notin \tilde{G}_{\mathcal I}$.
Since \,$\tilde{\phi}$\, is any lift of \,$\phi$\,
we conclude that \,$\phi \notin G_{\mathcal{I}}$\, which is a contradiction.
Consequently, we
obtain \,$G_{\mathcal I} \subset {\mathcal I}_{G}$\,
and then \,${\mathcal I}_{G}=G_{\mathcal I}$.
\end{proof}

The next rigidity results are  consequences of Propositions
\ref{pro:bigsma}, \ref{pro:bigsma2} and
Remark \ref{rem:big}.
Next proposition implies Theorem \ref{teo:main6}.

%%%%%%%%%%%%%%%%%%%%%%%%%%%%%%%%%%
%%%%%%%%%%%%%%%%%%%%%%%%%%%%%%%%%%
\vskip10pt

\begin{pro}
\label{pro:bigsma3}
Let \,$G$\, be a nilpotent subgroup of \,$\mathrm{Homeo}({\mathbb T}^{2})$.
Then there exists a finite index normal subgroup \,$H$\, of \,$G$\,
$($of index \,$1$ , $2$ , $3$ , $4$ , $6$\, or \,$8)$
containing \,$G_{0}$\,
such that
\[
\big\{ \rho_{\mu}(\tilde{\phi}) \ \ ; \ \ \mu \in {\mathcal P}(\phi) \big\} =
\big\{ \rho_{\nu}(\tilde{\phi})  \ \ ; \ \  \nu \in {\mathcal P}(H) \big\}
\]
for any lift \,$\tilde{\phi}$\, of every \,$\phi \in G_{0}$\,.
In particular we obtain \,${\mathcal I}_{G}={\mathcal I}_{H}=H_{\mathcal I}$\,.
Moreover \,${\mathcal I}_{G}$\,
is a normal subgroup of \,$G$.
\end{pro}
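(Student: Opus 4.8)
The plan is to reduce the general case to the situation already settled by Proposition \ref{pro:bigsma}, by replacing $G$ with a normal finite index subgroup $H \supseteq G_{0}$ whose image $[H]$ in $\mathrm{MCG}(\mathbb{T}^{2})$ satisfies Condition $(**)$. Since $[\,\cdot\,]\colon G \to [G]$ is a homomorphism with kernel $G_{0}$, any normal subgroup of $[G]$ lifts to a normal subgroup $H := [\,\cdot\,]^{-1}([H])$ of $G$ containing $G_{0}$, with $[G:H]=[[G]:[H]]$ and $H_{0}=G_{0}$. Thus the whole task becomes choosing $[H]$ normal in $[G]$, of index in $\{1,2,3,4,6,8\}$, and satisfying the matrix part of Condition $(**)$.

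To choose $[H]$ I would invoke Lemma \ref{lem:nmcg} and Remark \ref{rem:big}. If $G$ already satisfies Condition $(**)$ I set $[H]=[G]$ (index $1$). Otherwise Remark \ref{rem:big} tells me that $[G]$ is a non-trivial finite group, or $\langle -D\rangle$, or $\langle D,-Id\rangle$ for a non-trivial Dehn twist $D$, and that in each of these cases the set $S$ of elements of $[G]$ having no non-trivial root of unity among their eigenvalues is a proper, finite index, normal subgroup of $[G]$; I set $[H]=S$. Concretely, $S=\{Id\}$ when $[G]$ is finite (so $H=G_{0}$ and the index equals $|[G]|\in\{2,3,4,6,8\}$ by the order bound for finite nilpotent subgroups of $\mathrm{GL}(2,\mathbb{Z})$ coming from Lemma \ref{lem:nmcg}), $S=\langle D^{2}\rangle$ when $[G]=\langle -D\rangle$ (index $2$), and $S=\langle D\rangle$ when $[G]=\langle D,-Id\rangle$ (index $2$). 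In every case $[H]$ is trivial or generated by a Dehn twist, hence satisfies Condition $(**)$, and the index lies in $\{1,2,3,4,6,8\}$, in agreement with the claim that $G/H$ embeds into $C_{6}$ or $D_{4}$.

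With $H$ fixed, the rotation-set equality follows formally. Since $H$ is nilpotent and satisfies Condition $(**)$, Proposition \ref{pro:bigsma} applied to $H$ (using $H_{0}=G_{0}$) produces, for each $\phi\in G_{0}$ and each $\mu\in\mathcal{P}(\phi)$, a measure $\nu\in\mathcal{P}(H)$ with $\rho_{\mu}(\tilde{\phi})=\rho_{\nu}(\tilde{\phi})$ for every lift; this gives $\{\rho_{\mu}(\tilde{\phi}):\mu\in\mathcal{P}(\phi)\}\subseteq\{\rho_{\nu}(\tilde{\phi}):\nu\in\mathcal{P}(H)\}$, and the reverse inclusion is immediate because $\phi\in H$ forces $\mathcal{P}(H)\subseteq\mathcal{P}(\phi)$. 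Because $G_{0}=H_{0}$ and irrotationality of $\phi$ depends only on $\phi$ and $\mathcal{P}(\phi)$, one gets $\mathcal{I}_{G}=\mathcal{I}_{H}$ tautologically, while $\mathcal{I}_{H}=H_{\mathcal{I}}$ is exactly Proposition \ref{pro:bigsma2} for $H$. As $H_{\mathcal{I}}$ is a subgroup, this shows $\mathcal{I}_{G}$ is a subgroup.

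Finally, normality of $\mathcal{I}_{G}$ in $G$ reduces to conjugation invariance, which I would extract from Lemma \ref{rem:pact}. If $\phi\in\mathcal{I}_{G}$ has irrotational lift $\tilde{\phi}$ and $g\in G$ has lift $\tilde{g}$, then for any $\mu'\in\mathcal{P}(g\phi g^{-1})=g_{*}\mathcal{P}(\phi)$, writing $\mu'=g_{*}\mu$ with $\mu\in\mathcal{P}(\phi)$, Lemma \ref{rem:pact} gives $\rho_{\mu'}(\tilde{g}\tilde{\phi}\tilde{g}^{-1})=[g]\big(\rho_{\mu}(\tilde{\phi})\big)=(0,0)$, so $\tilde{g}\tilde{\phi}\tilde{g}^{-1}$ is an irrotational lift of $g\phi g^{-1}\in G_{0}$ and $g\phi g^{-1}\in\mathcal{I}_{G}$. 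Together with the subgroup property this makes $\mathcal{I}_{G}$ a normal subgroup. I expect the only genuine work to be the case analysis of the second paragraph, namely verifying that $[H]$ can always be taken normal in $[G]$ of the prescribed index while satisfying Condition $(**)$; once Propositions \ref{pro:bigsma} and \ref{pro:bigsma2} are available, the remaining steps are formal.
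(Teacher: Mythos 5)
Your proposal is correct and follows essentially the same route as the paper: reduce to Proposition \ref{pro:bigsma} by passing to the preimage $H$ of the subgroup $S$ of $[G]$ singled out in Remark \ref{rem:big} (with the same case analysis $[G]$ finite, $\langle -D\rangle$, $\langle D,-Id\rangle$, and the same index bound via the classification of finite nilpotent subgroups of $\mathrm{GL}(2,\mathbb{Z})$), then deduce ${\mathcal I}_{G}={\mathcal I}_{H}=H_{\mathcal I}$ from Proposition \ref{pro:bigsma2} and normality from Lemma \ref{rem:pact}. The only difference is cosmetic: you spell out the reverse inclusion of the rotation-set equality and the conjugation computation, which the paper leaves implicit.
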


%%%%%%%%%%%%%%%%%%%%%%%%%%%%%%%%%%
%%%%%%%%%%%%%%%%%%%%%%%%%%%%%%%%%%

\begin{proof}
If \,$G$\, satisfies Condition $(**)$ then Proposition \ref{pro:bigsma} gives us \,$H=G$. In the
case where \,$G$\, does not satisfy Condition $(**)$ we have from Remark \ref{rem:big} that:
\begin{itemize}
\item
If \,$[G]$\, is finite then we take \,$H=G_{0}$\,;

\item
If \,$[G]=\langle -D \rangle$\, (resp. \,$[G]=\langle D \,,-Id \rangle$)\, then we take \,$H$\, as the inverse image of \,$\langle D^{2} \rangle$\, (resp. \,$\langle D \rangle$) by the map
\,$\psi \in G \mapsto [\psi]\in [G]$.
\end{itemize}
In the latter case \,$H$\, is an index \,$2$\, normal subgroup of \,$G$.
In the former case we just have to notice that
the list of finite nilpotent subgroups of \,$\mathrm{GL}(2,{\mathbb Z})$\, consists of the
subgroups of \,$C_{6}$\, and the subgroups of \,$D_{4}$.
It can be proved by using Remark \ref{rem:spec:01}.
The group $H$ satisfies Condition $(**)$.
We have ${\mathcal I}_{G}={\mathcal I}_{H}$ by definition whereas
${\mathcal I}_{H} = H_{\mathcal I}$ is a consequence of Proposition \ref{pro:bigsma2}.
Since $H_{\mathcal I}$ is a subgroup of $H$, it follows that ${\mathcal I}_{G}$ is a subgroup of $G$.
It is normal in $G$ since ${\mathcal I}_{\mathrm{Homeo}_{0}({\mathbb T}^{2})}$ is a normal subset
of $\mathrm{Homeo}_{0}({\mathbb T}^{2})$.
\end{proof}

%%%%%%%%%%%%%%%%%%%%%%%%%%%%%%%%%%
%%%%%%%%%%%%%%%%%%%%%%%%%%%%%%%%%%
\vskip10pt

\begin{rem}
The calculations in Lemma \ref{lem:rinv}
and Propositions \ref{pro:bigsma},
\ref{pro:bigsma2}, \ref{pro:bigsma3} work analogously if
\,$G$\, is a nilpotent subgroup of \,$\mathrm{Homeo}_{0}(S)$\, where
\,$S$\, is a compact orientable surface of genus \,$g \geq 2$.
As a consequence the obvious adaptation of Theorem \ref{teo:main6}
to the case \,$g \geq 2$\, holds true.
\end{rem}

%%%%%%%%%%%%%%%%%%%%%%%%%%%%%%%%%%
%%%%%%%%%%%%%%%%%%%%%%%%%%%%%%%%%%
\vskip10pt

\begin{pro}
\label{pro:ano}
Let \,$G$\, be a nilpotent subgroup of \,$\mathrm{Homeo}({\mathbb T}^{2})$\,
such that \,$[G]$\, contains an Anosov class, i.e.
there exists \,$\psi \in G$\, such that
\,$\mathrm{spec}[\psi] \cap {\mathbb S}^{1} = \emptyset$\,.
Then \,${\mathcal I}_{G}$\, is a finite index normal subgroup of \,$G_{0}$\,.
\end{pro}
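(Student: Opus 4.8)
The plan is to reduce the statement to the rigidity results already established, since the Anosov hypothesis is strong enough to place $G$ inside the well-understood class governed by Condition $(**)$. The first observation is that an Anosov class $[\psi]$, having $\mathrm{spec}[\psi]\cap\mathbb{S}^{1}=\emptyset$, in particular satisfies $1\notin\mathrm{spec}[\psi]$. Hence Proposition \ref{pro:bffi} applies verbatim and yields that $G_{\mathcal{I}}$ is a finite index normal subgroup of $G_{0}$. The entire task therefore reduces to proving the identity $\mathcal{I}_{G}=G_{\mathcal{I}}$; once this is known, the conclusion for $\mathcal{I}_{G}$ is immediate.

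To obtain $\mathcal{I}_{G}=G_{\mathcal{I}}$ I would invoke Proposition \ref{pro:bigsma2}, which gives exactly this equality under Condition $(**)$. So the heart of the argument is to verify that the Anosov hypothesis forces $G$ to satisfy Condition $(**)$. I would argue by contrapositive using Remark \ref{rem:big}: the nilpotent subgroups of $\mathrm{Homeo}(\mathbb{T}^{2})$ that fail Condition $(**)$ are precisely those for which $[G]$ is a non-trivial finite group, $[G]=\langle -D\rangle$, or $[G]=\langle D,-Id\rangle$ for a (non-trivial) Dehn twist $D$. In each of these cases every element of $[G]$ has both of its eigenvalues on $\mathbb{S}^{1}$: the finite subgroups consist of classes whose eigenvalues are roots of unity by Remark \ref{rem:spec:01}, while the powers of $\pm D$ have eigenvalues in $\{1,-1\}$. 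Consequently none of these groups can contain an Anosov class, so the presence of $[\psi]$ with $\mathrm{spec}[\psi]\cap\mathbb{S}^{1}=\emptyset$ rules them out and forces Condition $(**)$. Equivalently, by Lemma \ref{lem:nmcg} the group $[G]$ must be $\langle A\rangle$ or $\langle A,-Id\rangle$ with $A$ having no eigenvalue of modulus $1$, which is the positive form of $(**)$.

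Having verified Condition $(**)$, Proposition \ref{pro:bigsma2} gives $\mathcal{I}_{G}=G_{\mathcal{I}}$, and combining with the first paragraph we conclude that $\mathcal{I}_{G}$ is a finite index normal subgroup of $G_{0}$, as desired. I do not anticipate a genuine obstacle here: all the analytic content, namely the construction of the $G$-invariant measure and the invariance of rotation vectors, is absorbed into Propositions \ref{pro:bffi} and \ref{pro:bigsma2}. The only point demanding care is the eigenvalue bookkeeping identifying the exceptional mapping-class groups, and for this the classification of Lemma \ref{lem:nmcg} together with the spectral dichotomy of Remark \ref{rem:spec:01} is exactly what is needed.
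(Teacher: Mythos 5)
Your proof is correct and follows essentially the same route as the paper: the authors likewise observe that the Anosov hypothesis forces $[G]$ to be $\langle A\rangle$ or $\langle A,-Id\rangle$ with $\mathrm{spec}(A)\cap\mathbb{S}^{1}=\emptyset$ (i.e.\ Condition $(**)$ holds), then apply Proposition \ref{pro:bigsma2} to get $\mathcal{I}_{G}=G_{\mathcal{I}}$ and Proposition \ref{pro:bffi} to conclude that $G_{\mathcal{I}}$ is a finite index normal subgroup of $G_{0}$. Your verification of Condition $(**)$ via Remark \ref{rem:big} is just a slightly more explicit rendering of the same eigenvalue bookkeeping.
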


%%%%%%%%%%%%%%%%%%%%%%%%%%%%%%%%%%
%%%%%%%%%%%%%%%%%%%%%%%%%%%%%%%%%%

\begin{proof}
The group \,$[G]$\, is of the form \,$\langle A \rangle$\, or \,$\langle A\,,-Id \rangle$\, where
\,$\mathrm{spec}(A) \cap {\mathbb S}^{1} = \emptyset$\,.
Therefore we obtain \,${\mathcal I}_{G}=G_{\mathcal I}$\, by
Proposition \ref{pro:bigsma2}.
The group \,$G_{\mathcal I}$\, is a finite index normal subgroup of \,$G_{0}$\,
by Proposition \ref{pro:bffi}.
\end{proof}
%\begin{proof}
%It is clear that ${\mathcal I}_{G}$ is equal to ${\mathcal I}_{G_{0}}$.
%Since ${\mathcal I}_{G_{0}} =(G_{0})_{\mathcal I}$ by Proposition
%\ref{pro:bigsma2} we deduce that ${\mathcal I}_{G}$ is a group.
%\end{proof}

%%%%%%%%%%%%%%%%%%%%%%%%%%%%%%%%%%%%%%%%%%%%%%%%%%%%%
%%%%%%%%%%%%%%%%%%%%%%%%%%%%%%%%%%%%%%%%%%%%%%%%%%%%%
\begin{example*}
%\underline{Example}:
The property \,$G_{\mathcal I}= {\mathcal I}_{G}$\, is false if we do not impose any
condition on the nilpotent group \,$[G]$.
Let us consider a $C^{\infty}$-diffeomorphism \,$\tilde{h}:{\mathbb R}^{2} \to {\mathbb R}^{2}$\,
of the form \,$\tilde{h}(x,y) =(x +f(x),y + g(x))$\, where
\[ f(x+1) \equiv f(x), \ f(x) \equiv -f(-x), \ g(x+1) \equiv g(x) \
\mathrm{and} \ g(x) \equiv -g(-x). \]
We always have \,$f(0)=f(1/2)=g(0)=g(1/2)=0$\,, hence we obtain
\,$\{0,1/2\} \times {\mathbb R} \subset \mathrm{Fix}(\tilde{h})$.
We can choose \,$f,g$\, such that \,$x+f(x)$\, has fixed point set
\,$\{0,1/4,1/2\}$\, at \,$[0,1/2]$\, and \,$g(1/4) \neq 0$.
The diffeomorphism \,$\tilde{h}$\, is a lift of an element
\,$h$\, of \,$\mathrm{Diff}_{0}({\mathbb T}^{2})$.
We denote by \,$\phi$\, the element of \,$\mathrm{Diff}({\mathbb T}^{2})$\,
having the lift \,$\tilde{\phi}(x,y)=(-x,-y)$. We define the groups
\[ \tilde{G} =\langle \,\tilde{h}\,, -Id \, \rangle \ \   ; \ \ G =\langle \, h\,, \phi \,\rangle . \]
They are both abelian by construction, \,$G_{0}= \langle h \rangle$\,
and \,$\tilde{G}$\, is a lift of \,$G$.
The group \,$G_{0}$\, coincides with \,$G_{\mathcal I}$\,
(Proposition 3.4 of \cite{rib02} and Proposition \ref{pro:bffi}).
If \,$h^{k}$\, belongs to \,${\mathcal I}_{G}$\, then \,$\tilde{h}^{k}$\,
is its irrotational lift because \,$\mathrm{Fix}(\tilde{h}^{k}) \neq \emptyset$.
But the Lebesgue measure \,$\mathrm{Leb}$\, in the circle \,$\{1/4\} \times {\mathbb R}/{\mathbb Z}$\,
is $h$-invariant and \,$\rho_{\mathrm{Leb}} (\tilde{h}^{k}) = (0,k g(1/4))$.
We obtain \,${\mathcal I}_{G}=\{Id\}$\, and then \,${\mathcal I}_{G} \neq G_{\mathcal I}$.
\end{example*}

%%%%%%%%%%%%%%%%%%%%%%%%%%%%%%%%%%%%%%%%%%%%%%%%%%%%%
%%%%%%%%%%%%%%%%%%%%%%%%%%%%%%%%%%%%%%%%%%%%%%%%%%%%%

%%%%%%%%%%%%%%%%%%%%%%%%%%%%%%%%%%%%%%%%%%%%%%%%%%%%%
%%%%%%%%%%%%%%%%%%%%%%%%%%%%%%%%%%%%%%%%%%%%%%%%%%%%%
\vskip30pt
\section{Proof of the Theorem \ref{teo:main4}}
%\vskip10pt
%%%%%%%%%%%%%%%%%%%%%%%%%%%%%%%%%%%%%%%%%%%%%%%%%%%%%
%%%%%%%%%%%%%%%%%%%%%%%%%%%%%%%%%%%%%%%%%%%%%%%%%%%%%

In order to show Theorem \ref{teo:main4}
we reduce the general situation to more tractable ones. The main tools are
a Thurston decomposition of all elements in the normal subgroup $H$
\`{a} la Franks, Handel and Parwani \cite{fhp01} and
Theorem \ref{teo:plane}, that allows us to find global fixed points by
lifting the problem to the universal covering.

First we deal with the simple case in which $\mathrm{supp}(\mu)$
admits a $\tilde{\phi}$-invariant compact lift.

%%%%%%%%%%%%%%%%%%%%%%%%%%%%%%%%%%
%%%%%%%%%%%%%%%%%%%%%%%%%%%%%%%%%%
\vskip10pt
\begin{pro}
\label{pro:boucas}
Let \,$G= \langle H,\phi \rangle$\, be a nilpotent subgroup of
\,$\mathrm{Diff}_{0}^{1}({\mathbb T}^{2})$\,
where \,$H$\, is a normal subgroup of \,$G$. Suppose  there exist a
lift \,$\tilde{\phi}$\, of \,$\phi$\, to the universal covering and a non-empty compact
 set \,$\tilde{C}\subset\mathbb{R}^{2}$\, which is $\tilde{\phi}$-invariant.
If
\,$\pi(\tilde{C})$\, is contained in \,$\mathrm{Fix}(H)$\,
then \,$G$\, has a global fixed point.
\end{pro}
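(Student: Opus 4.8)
The plan is to lift the whole action to $\mathbb{R}^2$ and invoke Theorem \ref{teo:plane}. The first observation is that $\mathrm{Fix}(G)=\mathrm{Fix}(H)\cap\mathrm{Fix}(\phi)$, so a global fixed point is exactly a common fixed point of $\phi$ and $H$; hence if I can produce a subgroup $\tilde G\subset\mathrm{Diff}^{1}_{+}(\mathbb{R}^{2})$ consisting of lifts of the elements of $G$, projecting \emph{isomorphically} onto $G$ and preserving a non-empty compact set, then Theorem \ref{teo:plane} yields $\tilde p\in\mathrm{Fix}(\tilde G)$ and $\pi(\tilde p)\in\mathrm{Fix}(G)$. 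To be entitled to use Theorem \ref{teo:plane} I first reduce to $G$ finitely generated: the sets $\mathrm{Fix}(g)$ are closed in the compact torus, so it suffices to intersect each finite subfamily, i.e. to treat a finitely generated subgroup $G_{1}$, and enlarging $G_{1}$ I may assume $\phi\in G_{1}$. Using the nilpotent expansion of $\phi^{n}s\phi^{-n}$ (a bounded-length product of the finitely many iterated commutators $[s,\phi,\dots,\phi]$, which vanish beyond the nilpotency class), the $\phi$-conjugates of a finite subset of $H$ already lie in a finitely generated $\phi$-invariant subgroup $\hat H\le H$. Replacing $H$ by $\hat H$, so that $\mathrm{Fix}(\hat H)\supseteq\mathrm{Fix}(H)\supseteq\pi(\tilde C)$ and $\hat H\trianglelefteq\langle\hat H,\phi\rangle$, reduces everything to the finitely generated case.

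Next I would normalize the invariant set. By Zorn's lemma $\tilde C$ contains a non-empty compact $\tilde\phi$-invariant minimal set, so I may assume every $\tilde\phi$-orbit is dense in $\tilde C$. Fix $\tilde c_{0}\in\tilde C$; since $\pi(\tilde c_{0})\in\mathrm{Fix}(H)$, each $h\in H$ has a unique lift $\tilde h$ with $\tilde h(\tilde c_{0})=\tilde c_{0}$, and $h\mapsto\tilde h$ is a homomorphism onto a lift $\tilde H$ of $H$ (two lifts of $h_{1}h_{2}$ fixing $\tilde c_{0}$ coincide). The construction hinges on the claim that every such $\tilde h$ fixes $\tilde C$ pointwise; granting it, $\tilde\phi$ preserves $\tilde C$ and $\tilde H$ fixes it, so $\tilde G:=\langle\tilde H,\tilde\phi\rangle$ preserves the compact set $\tilde C$. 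A non-zero deck translation $T_{v}$ cannot preserve the bounded set $\tilde C$, so $\tilde G\cap\mathbb{Z}^{2}=\{0\}$; hence $\tilde G$ is a faithful, finitely generated, nilpotent lift of $G$ inside $\mathrm{Diff}^{1}_{+}(\mathbb{R}^{2})$, and Theorem \ref{teo:plane} closes the argument as above.

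The hard part is precisely this claim that the lift $\tilde h$ fixing $\tilde c_{0}$ fixes all of $\tilde C$. Write $\tilde c_{n}=\tilde\phi^{\,n}(\tilde c_{0})$; since $\pi(\tilde c_{n})\in\mathrm{Fix}(H)$ the discrepancy $a^{(h)}_{n}:=\tilde h(\tilde c_{n})-\tilde c_{n}$ lies in $\mathbb{Z}^{2}$, and as $\tilde c_{n}$ ranges in the compact set $\tilde C$ the family $\{a^{(h)}_{n}\}_{n\in\mathbb{Z}}$ is bounded. Here nilpotency enters: put $h_{0}=h$ and $h_{k+1}=[\phi,h_{k}]\in H$, so that $h_{k}\in\gamma_{k+1}(G)$ and $h_{c}=\mathrm{id}$ for the nilpotency class $c$, whence $a^{(h_{c})}_{n}\equiv 0$. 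A direct computation with the chosen lifts (using that $\tilde\phi$ commutes with integer translations and that each $\tilde h_{k}$ fixes $\tilde c_{0}$) gives the recursion $a^{(h_{k+1})}_{n}=a^{(h_{k})}_{n-1}-a^{(h_{k})}_{n}-a^{(h_{k})}_{-1}$. If $a^{(h_{k+1})}\equiv 0$, this forces $a^{(h_{k})}_{n-1}-a^{(h_{k})}_{n}$ to equal the constant $a^{(h_{k})}_{-1}$, so $a^{(h_{k})}_{n}=-n\,a^{(h_{k})}_{-1}$; boundedness over $n\in\mathbb{Z}$ then forces $a^{(h_{k})}_{-1}=0$ and hence $a^{(h_{k})}\equiv 0$. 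Descending from $k=c$ down to $k=0$ gives $a^{(h)}_{n}=0$ for all $n$, i.e. $\tilde h$ fixes the dense orbit $\{\tilde c_{n}\}$ and therefore all of $\tilde C$ by continuity. This ``a bounded $\mathbb{Z}^{2}$-cocycle cannot grow linearly'' mechanism, which plays the role that boundedness of rotation vectors played in the earlier arguments, is the crux; the step I expect to demand the most care is pinning down the exact form of the recursion and its indices, and verifying that all the $h_{k}$ genuinely lie in $H$ so that the lifts $\tilde h_{k}$ are well defined.
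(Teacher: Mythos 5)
Your proposal is correct, and its skeleton coincides with the paper's: pass to a $\tilde{\phi}$-minimal subset of $\tilde{C}$, lift $H$ to the group $\tilde{H}$ of lifts fixing a base point of $\tilde{C}$, show that $\tilde{H}$ fixes $\tilde{C}$ pointwise so that $\langle \tilde{H},\tilde{\phi}\rangle$ preserves the non-empty compact set $\tilde{C}$, apply Theorem \ref{teo:plane}, and reduce the general case to the finitely generated one by the finite intersection property. The crucial step, however, is handled by a genuinely different mechanism. The paper takes a $G$-invariant probability measure $\nu$ with $\mathrm{supp}(\nu)\subset\pi(\tilde{C})$ and runs an upward induction on the subgroups $Z^{(j)}(G)\cap H$, using the morphism property of $\rho_{\nu}$ (Lemma \ref{lem:mor}) to identify $[\tilde{\phi}^{\,l},\tilde{h}]$ with the lift of $[\phi^{l},h]$ fixing the base point; this yields simultaneously the normality of $\tilde{H}$ in $\langle\tilde{H},\tilde{\phi}\rangle$ and the inclusion $\tilde{C}\subset\mathrm{Fix}(\tilde{H})$. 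You instead run a downward induction on the lower central series through the integer discrepancies $a^{(h)}_{n}=\tilde{h}(\tilde{c}_{n})-\tilde{c}_{n}$: I checked your recursion $a^{(h_{k+1})}_{n}=a^{(h_{k})}_{n-1}-a^{(h_{k})}_{n}-a^{(h_{k})}_{-1}$ (with $[\phi,h_{k}]=\phi h_{k}\phi^{-1}h_{k}^{-1}$ and $\tilde{h}_{k+1}$ the lift fixing $\tilde{c}_{0}$), and it is exactly right; the telescoping identity $a^{(h_{k})}_{n}=-n\,a^{(h_{k})}_{-1}$ together with boundedness then closes the induction. This buys you something: you never need the existence of a $G$-invariant measure on $\pi(\tilde{C})$ nor Lemma \ref{lem:mor} — only integrality and boundedness of the discrepancies on the compact set $\tilde{C}$ — and your argument for nilpotency of $\langle\tilde{H},\tilde{\phi}\rangle$ (the kernel of the projection onto $G$ consists of integer translations preserving a bounded set, hence is trivial, so the projection is an isomorphism) is cleaner than the paper's comparison of derived subgroups. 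Two small remarks: the expansion of $\phi^{n}s\phi^{-n}$ is not a product of boundedly many commutators (in the Heisenberg group it equals $[\phi,s]^{n}s$), although it does lie in the finitely generated subgroup you describe; and the finite generation of $\hat{H}$ is most quickly justified by the fact that finitely generated nilpotent groups are polycyclic, so that one may simply take $\hat{H}=H\cap\langle h_{1},\ldots,h_{m},\phi\rangle$.
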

%%%%%%%%%%%%%%%%%%%%%%%%%%%%%%%%%%
%%%%%%%%%%%%%%%%%%%%%%%%%%%%%%%%%%

\begin{proof}
If \,$\tilde{C}$\, is not $\tilde{\phi}$-minimal we replace it by a
$\tilde{\phi}$-minimal set contained in \,$\tilde{C}$.
Let \,$\nu$\, be a $G$-invariant Borel probability measure
on \,$\mathbb{T}^{2}$\, with
\,$\mathrm{supp}(\nu) \subset \pi(\tilde{C})$.
Fix \,$\tilde{p} \in \tilde{C}$\, and let \,$\tilde{H}$\, be the following lift of \,$H$:
$$\tilde{H}:= \big\{ \tilde{h} \in \mathrm{Diff}_{0}^{1}({\mathbb R}^{2}) \ \ ;
\ \ h\in H \quad \text{and} \quad \tilde{h}(\tilde{p})=\tilde{p} \big\}.$$
We claim that
$\tilde{H}$ is a normal subgroup of $\langle \tilde{H},\tilde{\phi} \rangle$.
For this let us denote $H^{j}=Z^{(j)}(G) \cap H$. We will show by induction on
\,$j\geq0$\, that the lift \,$\tilde{H}^{j} \subset \tilde{H}$\, of \,$H^{j}$\,
is a normal subgroup of \,$\langle \tilde{H},\tilde{\phi} \rangle$.

The result is obviously true for \,$j=0$.

Now, suppose the result holds true for some integer \,$j\geq0$\,. Given
\,$\ell \in \mathbb{Z}$\, we have that the elements of
\,$\tilde{\phi}^{l} \, \tilde{H}^{j} \, \tilde{\phi}^{-l}$\, fix \,$\tilde{\phi}^{l}(\tilde{p})$\,. From the induction hypothesis we have that
\,$\tilde{\phi}^{l} \, \tilde{H}^{j} \, \tilde{\phi}^{-l}=\tilde{H}^{j}$. Thus
the $\tilde{\phi}$-orbit of \,$\tilde{p}$\, is contained in
\,$\mathrm{Fix}(\tilde{H}^{j})$\, and then \,$\tilde{C} \subset \mathrm{Fix}(\tilde{H}^{j})$.
Consequently we conclude that
\,$\pi^{-1}(\pi(\tilde{C})) \subset \mathrm{Fix}(\tilde{H}^{j})$\, since the elements of
\,$\tilde{H}^{j}$\, commute with the covering transformations.

Given \,$h \in H^{j+1}$\, and \,$l \in {\mathbb Z}$\, we know that
\,$h':=[\,\phi^{l}\,,h\,]$\, belongs to  \,$H^{j}$. On the other hand,
\,${\rho_{\nu}\big|}_{\langle \tilde{H},\tilde{\phi} \rangle}$\,
is a morphism of groups by  Lemma \ref{lem:mor}  and
we obtain \,$\rho_{\nu}\big(\big[\, \tilde{\phi}^{l} \,,\tilde{h}\,\big]\big)=(0\,,0)$.
Moreover, \,$\tilde{h}' \in \tilde{H}^{j}$\, also satisfies
\,$\rho_{\nu} \big(\tilde{h}'\big)=(0\,,0)$\, since
\,$\pi^{-1}(\pi(\tilde{C})) \subset \mathrm{Fix}(\tilde{H}^{j})$\,
and \,$\text{supp}(\nu) \subset \pi(\tilde{C})$.
Hence we obtain \,$\big[\,\tilde{\phi}^{l} \,,\tilde{h}\,\big]=\tilde{h}'$\, and
\[ \tilde{\phi}^{l} \circ \tilde{h} \circ \tilde{\phi}^{-l}   = \tilde{h}' \circ \tilde{h} \in \tilde{H}^{j+1} \]
proving that \,$\tilde{H}^{j+1}$\, is a normal subgroup of \,$\langle \tilde{H},\tilde{\phi} \rangle$.
Since \,$G$\, is nilpotent we conclude that \,$\tilde{H}$\, is normal in
\,$\langle \tilde{H},\tilde{\phi} \rangle$.

Now, in particular we have that
\,$\tilde{C}$\, is contained in \,$\mathrm{Fix}(\tilde{H})$\, and it is
 is a non-empty compact set
which is invariant by \,$\langle \tilde{H},\tilde{\phi} \rangle$.
 Moreover, from \,$\tilde{\phi}\,\tilde{H}\,\tilde{\phi}^{-1}=\tilde{H}$\, and
\,${\phi}\,{H}\,{\phi}^{-1}={H}$\, we conclude that
\,$[\langle\tilde{H},\tilde{\phi}\rangle,\langle\tilde{H},\tilde{\phi}\rangle]\subset \tilde{H}$\,
and
\,$[\langle {H}, {\phi}\rangle,\langle {H}, {\phi}\rangle]\subset  {H}$.
Consequently, the natural projection from
\,$[\langle\tilde{H},\tilde{\phi}\rangle,\langle\tilde{H},\tilde{\phi}\rangle]$\, to
\,$[\langle {H}, {\phi}\rangle,\langle {H}, {\phi}\rangle]$\, is an isomorphism proving that
\,$\langle \tilde{H},\tilde{\phi} \rangle$\, is also a nilpotent group.

%On the other hand
%\,$[\langle\tilde{H},\tilde{\phi}\rangle,\langle\tilde{H},\tilde{\phi}\rangle]\subset \tilde{H}$\,
%since \,$\tilde{H}$\, is normal in \,$\langle \tilde{H},\tilde{\phi} \rangle$.
%Since \,$\tilde{H}$\, is a lift of \,$H$\, we conclude that
%\,$\langle \tilde{H},\tilde{\phi} \rangle$\, is also a nilpotent group.
%

Thus if \,$G$\, is finitely generated then
\,$\mathrm{Fix} \big(\langle \tilde{H},\tilde{\phi} \rangle \big) \neq\emptyset$\,  by
Theorem \ref{teo:plane} and
\,$G$\, has a global fixed point. The general case for \,$G$\, follows now from the  finite intersection
property on \,$\mathbb{T}^{2}$.
\end{proof}

Let us return to the statement of Theorem \ref{teo:main4} and consider
the following special case\,:   \,$\mathrm{supp}(\mu)$\, is a finite set.
Let \,$p\in \text{supp}(\mu)$\, and \,$\tilde{p}\in \pi^{-1}(p)$.
The ergodic nature of \,$\mu$\,
guarantees that the \,$\phi$-orbit of \,$p$\, coincides with \,$\mathrm{supp}(\mu)$\,.
Moreover, the property \,$\rho_{\mu}\big(\tilde{\phi}\big)=(0\,,0)$\,
implies that the $\tilde{\phi}$-orbit of \,$\tilde{p}$\, is finite.
Consequently,
\,$\mathrm{supp}(\mu)$\, has a $\tilde{\phi}$-invariant compact lift.
In particular we obtain\,:

%%%%%%%%%%%%%%%%%%%%%%%%%%%%%%%%%%
%%%%%%%%%%%%%%%%%%%%%%%%%%%%%%%%%%
\vskip10pt
\begin{cor}
\label{cor:boucas}
Consider the hypotheses of Theorem \ref{teo:main4}.
If \,$\mathrm{supp}(\mu)$\, is a finite set
then \,$G$\, has a global fixed point.
\end{cor}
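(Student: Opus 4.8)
The plan is to deduce this corollary directly from Proposition \ref{pro:boucas} by producing a non-empty compact $\tilde\phi$-invariant lift of $\mathrm{supp}(\mu)$. First I would use the ergodicity of $\mu$ together with the finiteness of $\mathrm{supp}(\mu)$ to conclude that the support is a single periodic $\phi$-orbit: fixing $p \in \mathrm{supp}(\mu)$, there is a smallest $k \geq 1$ with $\phi^{k}(p) = p$, the support equals $\{p, \phi(p), \ldots, \phi^{k-1}(p)\}$, and $\mu$ is the equidistributed measure $\frac{1}{k}\sum_{i=0}^{k-1} \delta_{\phi^{i}(p)}$ on this orbit.

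Next I would pass to the lift $\tilde\phi$ that realizes the vanishing rotation vector. Since $\rho_{\mu}(\phi)$ is the zero class in $\mathbb{R}^{2}/\mathbb{Z}^{2}$, there is a lift $\tilde\phi$ with $\rho_{\mu}(\tilde\phi) = (0,0)$ in $\mathbb{R}^{2}$. Fix $\tilde p \in \pi^{-1}(p)$ and set $\tilde p_{i} = \tilde\phi^{i}(\tilde p)$, a lift of $\phi^{i}(p)$. Because $\tilde\phi$ commutes with the covering transformations, the map $\tilde\phi - \mathrm{Id}$ descends to $\mathbb{T}^{2}$ and its value at $\phi^{i}(p)$ equals $\tilde p_{i+1} - \tilde p_{i}$; telescoping the defining integral then gives
\[
\rho_{\mu}(\tilde\phi) = \frac{1}{k}\sum_{i=0}^{k-1}\big(\tilde p_{i+1} - \tilde p_{i}\big) = \frac{1}{k}\big(\tilde\phi^{k}(\tilde p) - \tilde p\big).
\]
As $\phi^{k}(p) = p$, the point $\tilde\phi^{k}(\tilde p)$ is a lift of $p$, so $\tilde\phi^{k}(\tilde p) = \tilde p + v$ for some $v \in \mathbb{Z}^{2}$, whence $\rho_{\mu}(\tilde\phi) = v/k$. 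The hypothesis $\rho_{\mu}(\tilde\phi) = (0,0)$ forces $v = 0$, that is $\tilde\phi^{k}(\tilde p) = \tilde p$.

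It follows that the $\tilde\phi$-orbit $\tilde C = \{\tilde p, \tilde\phi(\tilde p), \ldots, \tilde\phi^{k-1}(\tilde p)\}$ is a finite, hence non-empty compact, $\tilde\phi$-invariant subset of $\mathbb{R}^{2}$ whose projection $\pi(\tilde C) = \mathrm{supp}(\mu)$ is contained in $\mathrm{Fix}(H)$. Applying Proposition \ref{pro:boucas} to this $\tilde C$ and to the lift $\tilde\phi$ yields a global fixed point for $G$. I expect no real obstacle here: the only point requiring care is the telescoping identity identifying the integer translation $v$ with $k\,\rho_{\mu}(\tilde\phi)$, after which the existence of a compact invariant lift and the reduction to Proposition \ref{pro:boucas} are immediate.
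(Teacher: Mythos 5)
Your proposal is correct and follows essentially the same route as the paper: the paper likewise observes that ergodicity makes $\mathrm{supp}(\mu)$ a single periodic orbit, that $\rho_{\mu}\big(\tilde{\phi}\big)=(0,0)$ forces the lifted orbit of $\tilde{p}$ to be finite, and then invokes Proposition \ref{pro:boucas}. Your telescoping computation identifying $\rho_{\mu}\big(\tilde{\phi}\big)$ with $\frac{1}{k}\big(\tilde{\phi}^{k}(\tilde{p})-\tilde{p}\big)$ is exactly the detail the paper leaves implicit.
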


%%%%%%%%%%%%%%%%%%%%%%%%%%%%%%%%%%
%%%%%%%%%%%%%%%%%%%%%%%%%%%%%%%%%%

Later on we consider the Thurston decomposition of the normal subgroup $H$.
The next lemma is used to rule out the existence of non-essential
reducing curves. Moreover, let us remark that if \,$\text{supp}(\mu)=\mathbb{T}^{2}$\,
in Theorem \ref{teo:main4} then \,$H=\{Id\}$\,
and the result follows from the referred theorem of Franks in \cite{f22}. Then, in what
follows we are assuming that \,$\text{supp}(\mu)\neq \mathbb{T}^{2}$.

%%%%%%%%%%%%%%%%%%%%%%%%%%%%%%%%%%
%%%%%%%%%%%%%%%%%%%%%%%%%%%%%%%%%%
\vskip10pt

\begin{lem}
\label{lem:noness}
Consider the setting in Theorem \ref{teo:main4}.
Suppose  there exists a
non-essential simple closed curve \,$\gamma$\,
such that$\,:$
\begin{itemize}
\item
$\gamma \subset {\mathbb T}^{2} \setminus \mathrm{supp}(\mu) \,;$

\item
the compact disc \,$D$\, enclosed by \,$\gamma$\, contains points of
\,$\mathrm{supp}(\mu) \,;$

\item
$\phi^{k}(\gamma)$\, is isotopic to \,$\gamma$\,
in \,$\mathbb{T}^{2}\setminus\mathrm{supp}(\mu)$\,  for some
\,$k \in {\mathbb Z}^{+}$.

\end{itemize}
Then \,$G$\, has a global fixed point.
\end{lem}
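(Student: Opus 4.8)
The plan is to use the disc $D$ to extract from $\mathrm{supp}(\mu)$ a non-empty compact piece admitting a $\tilde{\phi}$-invariant compact lift, and then to invoke Proposition \ref{pro:boucas}. First I would fix the lift $\tilde{\phi}$ of $\phi$ with $\rho_{\mu}(\tilde{\phi})=(0,0)$ and set $S:=D\cap\mathrm{supp}(\mu)$. Since $\gamma\subset\mathbb{T}^{2}\setminus\mathrm{supp}(\mu)$, the support points of $D$ lie in its interior, so $S$ is a non-empty compact subset of $\mathrm{supp}(\mu)\subset\mathrm{Fix}(H)$ and $\mu(S)=\mu(D)>0$. The third hypothesis provides an ambient isotopy of $\mathbb{T}^{2}$, supported in $\mathbb{T}^{2}\setminus\mathrm{supp}(\mu)$, carrying $\gamma$ to $\phi^{k}(\gamma)$. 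Comparing the two discs bounded by $\phi^{k}(\gamma)$ and using that this isotopy fixes $\mathrm{supp}(\mu)$ pointwise, I would check that $\phi^{k}(D)\cap\mathrm{supp}(\mu)=S$, whence $\phi^{k}(S)=S$ since $\phi$ preserves $\mathrm{supp}(\mu)$.

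Next I would pass to the universal covering. As $\gamma$ is non-essential, $\pi^{-1}(D)$ is a disjoint union of discs $\tilde{D}+v$, $v\in\mathbb{Z}^{2}$, each mapped homeomorphically onto $D$; put $\tilde{S}:=\pi^{-1}(S)\cap\tilde{D}$, a non-empty compact lift of $S$. Lifting the isotopy above, which fixes $\pi^{-1}(\mathrm{supp}(\mu))$ pointwise, identifies the support content $\pi^{-1}(S)\cap E$ of every lift $E$ of $\phi^{k}(D)$ as a single $\mathbb{Z}^{2}$-translate of $\tilde{S}$. Since $\tilde{\phi}^{k}(\tilde{D})$ is one such lift and contains $\tilde{\phi}^{k}(\tilde{S})$, I obtain $\tilde{\phi}^{k}(\tilde{S})=\tilde{S}+v_{0}$ for some $v_{0}\in\mathbb{Z}^{2}$.

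The crux is to show $v_{0}=0$, and here the zero rotation vector enters. Writing $\delta:=\tilde{\phi}-\mathrm{Id}$ as a map $\mathbb{T}^{2}\to\mathbb{R}^{2}$, Birkhoff's ergodic theorem applied to the Birkhoff sums $\sum_{i=0}^{n-1}\delta(\phi^{i}x)=\tilde{\phi}^{n}(\tilde{x})-\tilde{x}$ gives, for $\mu$-a.e. $x$, that $(\tilde{\phi}^{n}(\tilde{x})-\tilde{x})/n\to\rho_{\mu}(\tilde{\phi})=(0,0)$. As $\mu(S)>0$, I may choose such a generic $x\in S$ with lift $\tilde{x}\in\tilde{S}$. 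Iterating $\tilde{\phi}^{k}(\tilde{S})=\tilde{S}+v_{0}$ (and using that $\tilde{\phi}^{k}$ commutes with translations) yields $\tilde{\phi}^{nk}(\tilde{S})=\tilde{S}+nv_{0}$, so $\tilde{\phi}^{nk}(\tilde{x})\in\tilde{S}+nv_{0}$ and $(\tilde{\phi}^{nk}(\tilde{x})-\tilde{x})/(nk)\to v_{0}/k$ because $\tilde{S}$ is bounded. Comparing with the previous limit along the subsequence $m=nk$ forces $v_{0}=0$.

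Finally, with $\tilde{\phi}^{k}(\tilde{S})=\tilde{S}$ the set $\tilde{C}:=\bigcup_{j=0}^{k-1}\tilde{\phi}^{j}(\tilde{S})$ is compact and $\tilde{\phi}$-invariant, and $\pi(\tilde{C})=\bigcup_{j}\phi^{j}(S)\subset\mathrm{Fix}(H)$. Proposition \ref{pro:boucas} then yields a global fixed point for $G$. I expect the main obstacle to be the covering-space bookkeeping needed to pin down that $\tilde{\phi}^{k}(\tilde{S})$ is a single translate of $\tilde{S}$ (in particular handling a possibly disconnected $S$ via the lifted isotopy); once that is in place, the rotation-vector argument eliminating $v_{0}$ is the conceptual heart, and the reduction to Proposition \ref{pro:boucas} is immediate.
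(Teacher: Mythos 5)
Your proposal is correct and follows essentially the same route as the paper's proof: set $C=D\cap\mathrm{supp}(\mu)$, deduce $\phi^{k}(C)=C$ from the isotopy, lift to get $\tilde{\phi}^{k}(\tilde{C})=\tilde{C}+v_{0}$, kill $v_{0}$ using $\rho_{\mu}(\tilde{\phi})=(0,0)$, and feed the resulting compact $\tilde{\phi}$-invariant set into Proposition \ref{pro:boucas}. You in fact supply more detail than the paper on the two steps it asserts without justification (that the lifted support content of $\tilde{\phi}^{k}(\tilde{D})$ is a single translate of $\tilde{S}$, and the Birkhoff argument forcing $v_{0}=0$).
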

%%%%%%%%%%%%%%%%%%%%%%%%%%%%%%%%%%
%%%%%%%%%%%%%%%%%%%%%%%%%%%%%%%%%%

\begin{proof}
Let us denote \,$C= D \cap \mathrm{supp}(\mu)$\, and let
\,$\tilde{D}$\, be a lift  of
\,$D$\, in \,${\mathbb R}^{2}$.
%we define \,$\tilde{C} := \pi^{-1}(C) \cap \tilde{D}$.
Let \,$\tilde{\phi}$\, be the lift of \,$\phi$\, such that
\,$\rho_{\mu}\big(\tilde{\phi}\big)=(0\,,0)$\, and let us fix the lift
\,$\tilde{C} = \pi^{-1}(C) \cap \tilde{D}$\,  of \,$C$.

Since \,$\phi^{k}(\gamma)$\, is isotopic to \,$\gamma$\,
in \,$\mathbb{T}^{2}\setminus\mathrm{supp}(\mu)$\, we obtain  \,$\phi^{k}(C) = C$. Consequently, the compact set
\,$\cup_{j=0}^{k-1} \, \phi^{j}(C)$\, is
\,$\phi$-invariant and has positive measure.
Now, it follows from the $\phi$-ergodicity of \,$\mu$\, that
$$\text{supp}(\mu) = \bigcup_{j=0}^{k-1} \phi^{j}(C).$$

We have \,$\tilde{\phi}^{\,k}\big(\tilde{C}\big)
= \tilde{C} + (m\,,n)$\,
for some \,$(m\,,n)\in \mathbb{Z}^{2}$.
The property \,$\rho_{\mu}(\tilde{\phi})=(0\,,0)$\, implies
\,$(m\,,n)=(0\,,0)$\, and we obtain
\,$\tilde{\phi}^{\,k}(\tilde{C})=\tilde{C}$.
Thus \,$\cup_{j=0}^{k-1} \,\tilde{\phi}^{\,j}\big(\tilde{C}\big)$\,
is a non-empty compact \,$\tilde{\phi}$-invariant set whose projection by \,$\pi$\,
 is contained in \,$\text{Fix}(H)$\,.
Then the group \,$G$\, has a global fixed point by Proposition \ref{pro:boucas}.
\end{proof}

A common trait in several subcases in the proof of Theorem \ref{teo:main4}
is that if \,$\phi$\, preserves a non-trivial  simple closed curve
modulo isotopy relative to \,$\mathrm{supp}(\mu)$\, then \,$G$\, has a global fixed point.
In particular such phenomenon forces the Thurston decomposition of \,$H$\,
to be trivial.

%%%%%%%%%%%%%%%%%%%%%%%%%%%%%%%%%%
%%%%%%%%%%%%%%%%%%%%%%%%%%%%%%%%%%
\vskip10pt

\begin{lem}
\label{lem:nness}
Consider the hypotheses of Theorem \ref{teo:main4}.
Then either \,$\mathrm{Fix}(G) \neq \emptyset$\, or
any element of \,$H$\, is isotopic to the identity
relative to \,$\mathrm{supp}(\mu)$.
\end{lem}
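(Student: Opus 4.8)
The plan is to fix the hypotheses of Theorem \ref{teo:main4}, assume $\mathrm{Fix}(G)=\emptyset$, and deduce that the Thurston decomposition of $H$ relative to $K:=\mathrm{supp}(\mu)$ is trivial, so that every $h\in H$ is isotopic to the identity relative to $K$. First I would dispose of the degenerate cases: if $K$ is finite then $\mathrm{Fix}(G)\neq\emptyset$ by Corollary \ref{cor:boucas}, and if $K=\mathbb{T}^2$ then $H=\{Id\}$ and there is nothing to prove. So I may assume that $K$ is an infinite compact proper $G$-invariant subset of $\mathbb{T}^2$ contained in $\mathrm{Fix}(H)$. After reducing to the case in which $G$ is finitely generated, I apply Proposition \ref{pro:tnf} to $G$, the normal subgroup $H$ and the set $K$; since $K$ is infinite the Euler characteristic hypothesis is vacuous, and I obtain the families $\mathcal{R}$, $\mathcal{A}$ and the maps $\theta_{\psi}$.

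The key dichotomy is whether $\mathcal{A}=\emptyset$. If there are no reducing curves, then $\mathbb{T}^2$ is the single connected component $X$ of $\mathbb{T}^2\setminus\mathcal{A}$ and $\sharp(X\cap K)=\infty$; the first item of Proposition \ref{pro:tnf} then gives $\theta_h\equiv Id$ for every $h\in H$, so $h$ is isotopic to $\theta_h=Id$ relative to $K$ and we are done. It therefore remains to show that the existence of a reducing curve forces $\mathrm{Fix}(G)\neq\emptyset$, contradicting our standing assumption. As $G$ permutes the finite family $\mathcal{R}$, there is $k\in\mathbb{Z}^{+}$ such that $\phi^{k}(\gamma)$ is isotopic to $\gamma$ relative to $K$ for some fixed $\gamma\in\mathcal{R}$.

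If $\gamma$ is non-essential in $\mathbb{T}^2$ it bounds a disc $D$; since $\gamma$ is essential (non-peripheral) in $\mathbb{T}^2\setminus K$, the disc $D$ must meet $K$, for otherwise $\gamma$ would bound a disc in $\mathbb{T}^2\setminus K$. Thus $\gamma\subset\mathbb{T}^2\setminus\mathrm{supp}(\mu)$, the disc $D$ contains points of $\mathrm{supp}(\mu)$, and $\phi^{k}(\gamma)$ is isotopic to $\gamma$ in $\mathbb{T}^2\setminus\mathrm{supp}(\mu)$, so Lemma \ref{lem:noness} yields $\mathrm{Fix}(G)\neq\emptyset$, a contradiction.

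The main obstacle is the essential case, in which $\gamma$ is non-separating in $\mathbb{T}^2$ and all essential curves of $\mathcal{R}$ are mutually parallel. Here I would fix the lift $\tilde{\phi}$ with $\rho_{\mu}(\tilde{\phi})=(0,0)$ and a lift $\tilde{\gamma}$, so that $\tilde{\phi}^{k}(\tilde{\gamma})$ is isotopic to $\tilde{\gamma}+v$ rel $\pi^{-1}(K)$ for some $v\in\mathbb{Z}^2$. Mimicking the vanishing argument of Lemma \ref{lem:noness}, the hypothesis $\rho_{\mu}(\tilde{\phi})=(0,0)$ together with Birkhoff's theorem should force the component of $v$ transverse to $\gamma$ to be zero: otherwise $\tilde{\phi}^{kn}$ would displace the points of $\pi^{-1}(K)$ lying in the strip bounded by consecutive lifts of $\gamma$ at a linear rate in the transverse direction, contradicting the almost-everywhere sublinear displacement of the points of $\mathrm{supp}(\mu)$. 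Once this transverse translation vanishes, $\tilde{\phi}^{k}$ preserves the region between two consecutive lifts of $\gamma$; passing to the annular cover of $\mathbb{T}^2$ associated with the class $[\gamma]$, that region becomes relatively compact and contains a compact $\tilde{\phi}^{k}$-invariant lift of a piece of $\mathrm{supp}(\mu)\subset\mathrm{Fix}(H)$. Applying the method of Proposition \ref{pro:boucas}, that is, Theorem \ref{teo:plane} for the induced finitely generated nilpotent action preserving this compact set, then produces a global fixed point for $G$, again contradicting $\mathrm{Fix}(G)=\emptyset$. Hence $\mathcal{A}=\emptyset$ and the dichotomy of the second paragraph concludes the proof. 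I expect the delicate steps to be the rigorous verification that the transverse component of $v$ vanishes and the correct construction of the compact invariant set in the annular cover so that the argument of Proposition \ref{pro:boucas} applies.
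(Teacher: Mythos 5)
Your reduction to the Thurston decomposition, the treatment of the degenerate cases ($K$ finite via Corollary \ref{cor:boucas}, $K=\mathbb{T}^{2}$ forcing $H=\{Id\}$), the dichotomy on $\mathcal{R}$, and the elimination of non-essential reducing curves via Lemma \ref{lem:noness} all match the paper. The divergence, and the gap, is in the essential case. You correctly run the Birkhoff/no-drift argument to kill the transverse component of the translation $v$ (the paper does exactly this to conclude $\phi(U\cap K)=U\cap K$ for each annular component $U$ of $\mathbb{T}^{2}\setminus\mathcal{R}$). But your next step --- extracting a compact invariant set and invoking Theorem \ref{teo:plane} via the method of Proposition \ref{pro:boucas} --- does not go through. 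The region between consecutive lifts of $\gamma$ in the universal cover is a non-compact strip, and its intersection with $\pi^{-1}(K)$ is non-compact; the translation vector of $\tilde{\phi}^{k}$ on this set is only well defined modulo the period $w$ of the strip, and $\rho_{\mu}(\tilde{\phi})=(0,0)$ only gives almost-everywhere sublinear (not bounded) displacement in the direction parallel to $\gamma$, so you cannot normalize $\tilde{\phi}^{k}$ to preserve a compact subset of $\mathbb{R}^{2}$. Passing to the annular cover does make the region relatively compact, but then you are on an open annulus, where neither Proposition \ref{pro:boucas} nor Theorem \ref{teo:plane} (both stated for $\mathbb{R}^{2}$) applies, and no substitute fixed-point theorem for nilpotent actions of the open annulus preserving a compact set is available at this stage (the annulus results of the last section are themselves deduced from Theorem \ref{teo:main4}, so using them here would be circular). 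This is precisely the obstruction that restricts Lemma \ref{lem:noness} to non-essential curves, whose enclosed disc does lift compactly.

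The paper closes the essential case differently and more cheaply: having shown $\phi(U\cap K)=U\cap K$ for every component $U$, the ergodicity of $\mu$ forces $\mathbb{T}^{2}\setminus\mathcal{R}$ to have a single component, hence $\mathcal{R}$ consists of exactly one essential curve; then every $h\in H$ is isotopic relative to $K$ to a power of a Dehn twist about that curve, and this power must be trivial because $h$ is isotopic to the identity on $\mathbb{T}^{2}$ while an essential Dehn twist is nontrivial in $\mathrm{MCG}(\mathbb{T}^{2})$; so the unique curve can be removed, contradicting the minimality of the decomposition (a normalization you never impose but which this argument needs). You should replace your annular-cover step with this ergodicity-plus-Dehn-twist argument, or else supply and prove a fixed-point theorem valid on the open annulus.
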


%%%%%%%%%%%%%%%%%%%%%%%%%%%%%%%%%%
%%%%%%%%%%%%%%%%%%%%%%%%%%%%%%%%%%

\begin{proof}
Suppose \,$\mathrm{Fix}(G) = \emptyset$. Let us prove that
any element of \,$H$\, is isotopic to the identity
relative to \,$\mathrm{supp}(\mu)$.

Since \,$\text{Fix}(G)=\emptyset$\, there exists a finitely
generated subgroup \,$J$\, of \,$G$\, containing \,$\phi$\, and such that
\,$\mathrm{Fix}(J)=\emptyset$. Now it suffices to prove that
\,$h$\, is isotopic to the identity relative to \,$\mathrm{supp}(\mu)$\,
for every finitely generated subgroup \,$L$\, of \,$G$\, containing \,$J$\,
and \,$h \in L \cap H$. Hence up to replace \,$G$\, with \,$L$\, and
\,$H$\, with \,$L \cap H$\, we can suppose that \,$G$\, is finitely generated
and \,$\mathrm{Fix}(G) = \emptyset$.

Let us denote \,$K = \mathrm{supp}(\mu)$. Since \,$K$\, is a subset of
\,$\mathrm{Fix}(H)$\, then
\,$K$\, is $G$-invariant.
Now we apply Proposition \ref{pro:tnf}.
We can suppose that the Thurston decomposition is minimal, i.e.
that it has a minimal number of reducing curves.
It suffices to show
\,${\mathcal R}=\emptyset$\, and \,$\sharp K=\infty$.
The former property implies that \,${\mathbb T}^{2}$\, is the unique connected component of
\,${\mathbb T}^{2} \setminus {\mathcal A}$\, whereas the latter property guarantees that
\,$h$\, is isotopic to the identity relative to \,$K$\, for any \,$h \in H$.

Moreover since ${\mathcal R}$ is $\phi$-invariant modulo isotopy relative to
$K$, there exists $k \in {\mathbb Z}$ such that $\phi^{k}(\gamma)$ is isotopic to
$\gamma$ in ${\mathbb T}^{2} \setminus K$ for any connected component $\gamma$ of ${\mathcal R}$.
Hence we may admit
that \,${\mathcal R}$\, does not contain non-essential reducing curves by
Proposition \ref{pro:tnf} and Lemma \ref{lem:noness}.

Suppose \,${\mathcal R} \neq \emptyset$.
Since the curves in \,${\mathcal R}$\, are simple and pairwise disjoint, they
are free homotopic to an essential curve \,$\kappa$\, in \,${\mathbb T}^{2}$.
Consider a connected component \,$U$\, of \,${\mathbb T}^{2} \setminus {\mathcal R}$.
Then either \,$\sharp (U \cap K) = \infty$\, or
\,$\sharp (U \cap K) < \infty$\, and \,$\chi (U \setminus K)<0$\, by the properties
of the Thurston decomposition.
Since $U$ is an annulus ($\chi (U)=0$), it
contains points of \,$K$.
The curve $\kappa$ defines a non-trivial class $(a,b)$ in
$H^{1}({\mathbb T}^{2}, {\mathbb Z}) \sim {\mathbb Z}^{2}$. The property
\,$\rho_{\mu}(\tilde{\phi})=(0\,,0)$\, implies
\,$\phi(U \cap K)=U \cap K$,
otherwise we obtain
\[ \liminf_{n \to \infty}
\left\| \left< \frac{\tilde{\phi}^{n}(\tilde{z})- \tilde{z}}{n}, (-b,a) \right> \right\| >0 \]
for all $z \in K \cap U$ where $<,>$ is the canonical inner  product.
The last property contradicts the Birkhoff's ergodic theorem.

Since \,$\phi(U \cap K)=U \cap K$\, and
\,$\mu$\, is $\phi$-ergodic, we obtain that
\,${\mathbb T}^{2} \setminus {\mathcal R}$\, has one connected component.
We can suppose that \,$K$\, is an infinite set, otherwise
\,$\mathrm{Fix}(G)$\, is non-empty by
Corollary \ref{cor:boucas}.

If \,${\mathcal R}$\, contains exactly one reducing curve then
any \,$h$\, in \,$H$\, is isotopic to a Dehn twist \,$\theta$\, relative to
\,$K$. The diffeomorphism \,$h$\, is isotopic to the identity;
thus the Dehn twist is trivial.
We deduce that \,$h$\, is isotopic to the identity relative to \,$K$.
If we remove the unique curve in \,${\mathcal R}$\, we still have
a Thurston decomposition, contradicting the minimality of
${\mathcal R}$.
\end{proof}

Lemma \ref{lem:nness} implies that we can consider a canonical lift of \,$H$,
namely the identity lift \,$\tilde{H}$\, of \,$H$\,
to the universal covering of  \,${\mathbb T}^{2} \setminus \mathrm{supp}(\mu)$.
The practical consequence is that from now on we focus on properties of \,$\phi$\,
and the role of \,$H$\, is secondary.

%%%%%%%%%%%%%%%%%%%%%%%%%%%%%%%%%%
%%%%%%%%%%%%%%%%%%%%%%%%%%%%%%%%%%
\vskip10pt

\begin{defi}
\label{def:q2}
Let \,$\psi \in \mathrm{Homeo}({\mathbb T}^{2})$\, and let \,$K \subset \mathbb{T}^{2}$\, be
a non-empty  $\psi$-invariant compact set.
Let us fix \,$z \in \mathrm{Fix}(\psi) \setminus K$\,
and let
\,$U$\, be the connected component of \,${\mathbb T}^{2} \setminus K$\,
containing \,$z$. Fixed a lift \,$\tilde{z}$\, of \,$z$\,
in the universal covering \,$\tilde{U}$\, of \,$U$\, we will
denote by \,$\tilde{\psi}_{\tilde{z}}$\, the lift of \,$\psi$\, to \,$\tilde{U}$\, such that
\,$\tilde{\psi}_{\tilde{z}}(\tilde{z})=\tilde{z}$.
\end{defi}

%%%%%%%%%%%%%%%%%%%%%%%%%%%%%%%%%%
%%%%%%%%%%%%%%%%%%%%%%%%%%%%%%%%%%

Our goal is finding a lift \,$\tilde{\psi}_{\tilde{z}}$\, having a
compact fixed
point set and then applying Theorem \ref{cor:plane}.
The next definition is useful to study the obstruction to the existence
of such a lift.

%%%%%%%%%%%%%%%%%%%%%%%%%%%%%%%%%%
%%%%%%%%%%%%%%%%%%%%%%%%%%%%%%%%%%
\vskip10pt

\begin{defi}
\label{def:q}
Let us define a subset \,$Q(K, \psi)$\, of
\,$\mathrm{Fix}(\psi)\setminus K$.
Given \,$z \in \mathrm{Fix}(\psi)\setminus K$\, we say that
\,$z \not \in Q(K, \psi)$\, if
there exists a simple closed curve
\,$\varsigma:[\,0\,,1\,] \to {\mathbb T}^{2} \setminus K$\, such that
\,$\varsigma(0)=\varsigma(1)=z$\,,
\,$\varsigma$\, is essential in \,${\mathbb T}^{2}$\, and
\,$[\psi \circ \varsigma] = [\varsigma]$\, where \,$[\varsigma]$\,
denotes the
homotopy class defined by \,$\varsigma$\, in \,$\pi_{1}({\mathbb T}^{2} \setminus K, z)$.

\end{defi}

%%%%%%%%%%%%%%%%%%%%%%%%%%%%%%%%%%
%%%%%%%%%%%%%%%%%%%%%%%%%%%%%%%%%%

The property \,$Q(\mathrm{supp}(\mu),\phi) \neq \emptyset$\, is a necessary condition
for the existence of a lift of \,$\phi$\, with a non-empty compact fixed point set.
Curiously, it is also a sufficient condition.

%%%%%%%%%%%%%%%%%%%%%%%%%%%%%%%%%%
%%%%%%%%%%%%%%%%%%%%%%%%%%%%%%%%%%
\vskip10pt

\begin{lem}
\label{lem:compact}
Consider the setting in Theorem \ref{teo:main4}. Suppose there exists
\,$z \in Q(\mathrm{supp}(\mu),\phi)$\, and let
 \,$U$\, be the connected component of \,${\mathbb T}^{2} \setminus \mathrm{supp}(\mu)$\,
containing \,$z$.
Then either \,$\mathrm{Fix}(G) \neq \emptyset$\, or
\,$\mathrm{Fix}\big(\tilde{\phi}_{\tilde{z}}\big)$\, is a non-empty compact subset of \,$\tilde{U}$.
\end{lem}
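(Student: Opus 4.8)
The plan is to get non-emptiness for free and to obtain compactness by contraposition, funnelling every way it could fail into one of the earlier lemmas. Since $\phi(z)=z$ and $\mathrm{supp}(\mu)$ is $\phi$-invariant, $\phi$ permutes the connected components of $\mathbb{T}^{2}\setminus\mathrm{supp}(\mu)$ and fixes the one containing $z$, so $\phi(U)=U$ and the lift $\tilde{\phi}_{\tilde{z}}$ of Definition \ref{def:q2} is well defined; by construction $\tilde{z}\in\mathrm{Fix}(\tilde{\phi}_{\tilde{z}})$, so the fixed set is non-empty. I would then assume $\mathrm{Fix}(G)=\emptyset$ (otherwise there is nothing to prove) and suppose, for contradiction, that $\mathrm{Fix}(\tilde{\phi}_{\tilde{z}})$ is not compact. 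This yields fixed points $\tilde{w}_{n}$ leaving every compact subset of $\tilde{U}$; set $w_{n}=\pi(\tilde{w}_{n})\in\mathrm{Fix}(\phi)\cap U$.

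The first dichotomy is on the behaviour of the projections $w_{n}$. If $\{w_{n}\}$ leaves every compact subset of $U$, I pass to a subsequence with $w_{n}\to w$ in the compact torus. Then $w\in\partial U\subseteq\mathrm{supp}(\mu)\subseteq\mathrm{Fix}(H)$, while $w\in\mathrm{Fix}(\phi)$ because $\mathrm{Fix}(\phi)$ is closed; hence $w\in\mathrm{Fix}(H)\cap\mathrm{Fix}(\phi)=\mathrm{Fix}(G)$, contradicting $\mathrm{Fix}(G)=\emptyset$. Otherwise, after a subsequence, $w_{n}\to w\in U$. Choosing lifts $\tilde{w}_{n}'\to\tilde{w}$ and writing $\tilde{w}_{n}=\gamma_{n}\tilde{w}_{n}'$ with deck transformations $\gamma_{n}$ (necessarily escaping to infinity), the relation $\tilde{\phi}_{\tilde{z}}(\tilde{w}_{n}')=\Phi(\gamma_{n})^{-1}\gamma_{n}\,\tilde{w}_{n}'$, where $\Phi(\gamma)=\tilde{\phi}_{\tilde{z}}\circ\gamma\circ\tilde{\phi}_{\tilde{z}}^{-1}$, together with proper discontinuity forces $\Phi(\gamma_{n})^{-1}\gamma_{n}$ to be eventually a constant deck transformation. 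Taking two large indices gives a non-trivial $\gamma\in\mathrm{Fix}(\Phi)$; and since $\tilde{z}$ is fixed, the orbit $\mathrm{Fix}(\Phi)\,\tilde{z}$ lies in $\mathrm{Fix}(\tilde{\phi}_{\tilde{z}})$. Reading $\gamma$ as a loop $\varsigma$ based at $z$ in $U$, the equality $\Phi(\gamma)=\gamma$ means that $\phi\circ\varsigma$ and $\varsigma$ lift to paths with the same endpoints, i.e. $[\phi\circ\varsigma]=[\varsigma]$ in $\pi_{1}(U,z)=\pi_{1}(\mathbb{T}^{2}\setminus\mathrm{supp}(\mu),z)$.

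It remains to split on the image of $\gamma$ in $\pi_{1}(\mathbb{T}^{2})$. If this image is trivial, then $\varsigma$ bounds a disc $D$ which must contain points of $\mathrm{supp}(\mu)$ (otherwise $\gamma$ would already be trivial in $\pi_{1}(U)$), and a simple representative of $\varsigma$ with $[\phi\circ\varsigma]=[\varsigma]$ meets the hypotheses of Lemma \ref{lem:noness}, yielding a global fixed point and contradicting $\mathrm{Fix}(G)=\emptyset$. If the image is non-trivial, a simple representative of $\varsigma$ that is essential in $\mathbb{T}^{2}$ would exhibit $z\notin Q(\mathrm{supp}(\mu),\phi)$ by Definition \ref{def:q}, contradicting the standing hypothesis $z\in Q(\mathrm{supp}(\mu),\phi)$. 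In either case non-compactness is impossible, so $\mathrm{Fix}(\tilde{\phi}_{\tilde{z}})$ is a non-empty compact subset of $\tilde{U}$, as desired.

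The main obstacle is precisely the last step in both subcases: passing from the homotopically $\phi$-invariant class $\gamma\in\mathrm{Fix}(\Phi)$ to a \emph{simple} representative, and doing so while controlling the class in $\pi_{1}(\mathbb{T}^{2}\setminus\mathrm{supp}(\mu))$ and not merely in $\pi_{1}(\mathbb{T}^{2})$. I would handle this by surface topology on the subsurface $U\subseteq\mathbb{T}^{2}$: choose a primitive element of $\mathrm{Fix}(\Phi)$, take its closed-geodesic representative for a complete hyperbolic (or, in the annular and disc degeneracies, an \emph{ad hoc}) structure, and use that a primitive essential class, respectively a class bounding a subdisc, is realized by an embedded curve whose free homotopy class is still $\phi$-invariant; recall finally that homotopic simple closed curves on a surface are isotopic, so the homotopy $[\phi\circ\varsigma]=[\varsigma]$ upgrades to the isotopy required by Lemma \ref{lem:noness}. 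Verifying that this simplification can always be performed without leaving $\mathbb{T}^{2}\setminus\mathrm{supp}(\mu)$ is the delicate point that closes the argument.
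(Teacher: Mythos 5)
Your skeleton coincides with the paper's: non-emptiness is immediate from $\tilde{\phi}_{\tilde{z}}(\tilde{z})=\tilde{z}$; your first subcase is disposed of exactly as in the paper (since $\mathrm{Fix}(G)=\emptyset$ forces $\mathrm{Fix}(\phi)\cap\mathrm{supp}(\mu)=\emptyset$, the set $\mathrm{Fix}(\phi)\cap U$ is already compact); and your deck-transformation argument in the second subcase is a correct, slightly more explicit version of the paper's reduction to the injectivity of $\pi'\big|_{\mathrm{Fix}(\tilde{\phi})}$ --- both routes end with a non-trivial class $[\beta]\in\pi_{1}(U,z)$ satisfying $[\phi\circ\beta]=[\beta]$. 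The trouble lies entirely in the step you yourself flag as ``the main obstacle'', and the resolution you sketch for it does not work.

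From a $\phi$-invariant, possibly non-simple class $[\beta]$ you must produce a \emph{simple} closed curve in $\mathbb{T}^{2}\setminus\mathrm{supp}(\mu)$ whose isotopy class relative to $\mathrm{supp}(\mu)$ is $\phi$-invariant, in order to feed it either to Lemma \ref{lem:noness} or to Definition \ref{def:q}. Your justification --- pass to a primitive element of $\mathrm{Fix}(\Phi)$ and invoke that ``a primitive essential class is realized by an embedded curve'' --- is false on the surfaces that actually occur: in the only non-trivial case $U$ has negative Euler characteristic, so $\pi_{1}(U,z)$ is free non-abelian, and a group-theoretically primitive element need not be representable by a simple closed curve (already on a once-punctured torus the class $a^{2}b^{2}$ is not a proper power, yet its homology class $(2,2)$ is neither primitive nor zero, so it has no embedded representative; its geodesic representative self-intersects). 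Nor is there any a priori reason for the subgroup $\mathrm{Fix}(\Phi)$ to contain an element carried by a simple closed curve. This is precisely where the paper brings in Lemma 2.12 of Franks--Handel--Parwani \cite{fhp01}: it replaces $\beta$ by a representative contained in a finite-type compact subsurface $T\subset U$, with no boundary component bounding a disc in $\mathbb{T}^{2}\setminus\mathrm{supp}(\mu)$, invariant up to isotopy relative to $\mathrm{supp}(\mu)$, and carrying a finite-order representative of $\phi$; the boundary curves of $T$ are then the simple closed curves one tests (a component bounding a disc in $\mathbb{T}^{2}$ triggers Lemma \ref{lem:noness}, and otherwise $T$ is an essential annulus whose core class contradicts $z\in Q(\mathrm{supp}(\mu),\phi)$). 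Without this Thurston-type normal form, or a genuine substitute for it, your argument does not close.
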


%%%%%%%%%%%%%%%%%%%%%%%%%%%%%%%%%%
%%%%%%%%%%%%%%%%%%%%%%%%%%%%%%%%%%

\begin{proof}
Suppose $\mathrm{Fix}(G) = \emptyset$\, and let us
denote \,$\tilde{\phi}=\tilde{\phi}_{\tilde{z}}$\,  following Definition
\ref{def:q2}.
Let \,$\pi': \tilde{U} \to U$\, be the universal covering map.
By construction \,$\tilde{z}$\, belongs to \,$\mathrm{Fix}\big(\tilde{\phi}\big)$.
 Moreover,
it is clear that \,$\mathrm{Fix}(\phi) \cap U$\, is
a compact set since \,$\mathrm{Fix}(G) = \emptyset$\, implies
 \,$\mathrm{Fix}(\phi)\cap\mathrm{supp}(\mu)=\emptyset$. Hence it
suffices to prove that the map
\,$\pi'\big|_{\mathrm{Fix}(\tilde{\phi})}:\mathrm{Fix}\big(\tilde{\phi}\big) \to
\mathrm{Fix}(\phi)$\, is injective. We will prove this by contradiction.

 Suppose that \,$\pi'\big|_{\mathrm{Fix}(\tilde{\phi})}$\,
%\,$(\pi')_{|\mathrm{Fix}(\tilde{\phi})}$\,
is not injective.  Then
\,$\tilde{\phi}$\, commutes with some covering transformation and
there
exists a non-trivial homotopy class \,$[\beta] \in \pi_{1}(U, z)$\, such that
\,$[\phi \circ \beta] = [\beta]$.
Since \,$[\phi \circ \beta] = [\beta]$\, it follows from Lemma 2.12 of \cite{fhp01} that, up to replace \,$\beta$\, with another
representative of \,$[\beta]$\,
there exists a finite type compact and connected subsurface \,$T$\, contained in \,$U$,
containing \,$\beta$\,, with no connected component of \,$\partial T$\,
bounding a disc in \,${\mathbb T}^{2} \setminus \text{supp}(\mu)$\, and a homeomorphism
\,$\theta: {\mathbb T}^{2} \to {\mathbb T}^{2}$\, such that
\,$\theta$\, is isotopic to  \,$\phi$\, relative to
\,$\text{supp}(\mu)$,
\,$\theta(T) =T$\, and \,$\theta\big|_{T}$\, has finite order.

If  \,$\partial T$\, has a connected component that
bounds a compact disk \,$D$\, in \,${\mathbb T}^{2}$\, then \,$G$\, has a
global fixed point by Lemma \ref{lem:noness},  contradicting the condition
\,$\text{Fix}(G) = \emptyset$\,.
So we can suppose that all the boundary curves in \,$\partial T$\,
are essential in \,${\mathbb T}^{2}$. Since they do not intersect each other
they are all homotopic in \,${\mathbb T}^{2}$\, to some essential
simple closed curve \,$\kappa'$\, and the surface \,$T$\, is an annulus.
The class \,$[\beta]$\, is not null-homotopic
in \,$U$\, and
hence it is not null-homotopic in the annulus. Consequently,
\,$\beta$\, is homotopic to a non-trivial multiple of the
core fundamental class \,$\varsigma$\, of the annulus
and \,$[\phi\circ \varsigma] = [\varsigma]$\,
in \,$\pi_{1}(U, z)$. But this also leads to a contradiction
since \,$z \in Q(\mathrm{supp}(\mu),\phi)$\, and the proof is complete.
\end{proof}

%%%%%%%%%%%%%%%%%%%%%%%%%%%%%%%%%%
%%%%%%%%%%%%%%%%%%%%%%%%%%%%%%%%%%
\vskip10pt

\begin{pro}\label{prop:main}
Consider the setting in Theorem \ref{teo:main4}.
If \,$Q(\mathrm{supp}(\mu),\phi)$\, is non-empty then
\,$G$\, has a global fixed point.
\end{pro}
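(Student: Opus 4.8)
The plan is to mimic the proof of Proposition~\ref{pro:boucas}, transporting the problem to the universal covering $\tilde U$ of the connected component $U$ of $\mathbb{T}^2\setminus \mathrm{supp}(\mu)$ that contains a chosen point $z\in Q(\mathrm{supp}(\mu),\phi)$, and then invoking Theorem~\ref{cor:plane}. I would argue by contradiction: suppose $\mathrm{Fix}(G)=\emptyset$. Fix $z$, $U$, a lift $\tilde z$ and the lift $\tilde\phi_{\tilde z}$ as in Definition~\ref{def:q2}. Lemma~\ref{lem:compact} then guarantees that $\mathrm{Fix}(\tilde\phi_{\tilde z})$ is a non-empty compact subset of $\tilde U$, while Lemma~\ref{lem:nness} guarantees that every $h\in H$ is isotopic to the identity relative to $\mathrm{supp}(\mu)$. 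Since each $h\in H$ fixes $\mathrm{supp}(\mu)\supset\partial U$ and $\phi$ fixes $z\in U$, both $H$ and $\phi$ preserve $U$, so everything can be lifted to $\tilde U$.

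Because every $h\in H$ is isotopic to the identity relative to $\mathrm{supp}(\mu)$, lifting such an isotopy to $\tilde U$ starting at the identity would produce a canonical identity lift $\tilde h$; functoriality of isotopy lifting then shows that $\tilde H:=\{\tilde h: h\in H\}$ is a subgroup of $\mathrm{Diff}_{+}^{1}(\tilde U)$ and that $\kappa\colon\tilde H\to H$, $\tilde h\mapsto h$, is an isomorphism. The key point is that $\tilde\phi_{\tilde z}$ normalizes $\tilde H$. To see this, I would fix an isotopy $\{h_t\}$ relative to $\mathrm{supp}(\mu)$ from the identity to $h$ with identity lift $\{\tilde h_t\}$ to $\tilde U$; then $\{\tilde\phi_{\tilde z}\circ\tilde h_t\circ\tilde\phi_{\tilde z}^{-1}\}$ is a continuous lift of the isotopy $\{\phi\circ h_t\circ\phi^{-1}\}$ (an isotopy relative to $\mathrm{supp}(\mu)$ from the identity to $\phi h\phi^{-1}\in H$) that starts at the identity, so by uniqueness of lifts it is the identity lift and $\tilde\phi_{\tilde z}\circ\tilde h\circ\tilde\phi_{\tilde z}^{-1}=\widetilde{\phi h\phi^{-1}}\in\tilde H$.

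I would then set $\tilde G:=\langle\tilde H,\tilde\phi_{\tilde z}\rangle\subset\mathrm{Diff}_{+}^{1}(\tilde U)$. The normalization just established gives $[\tilde G,\tilde G]\subset\tilde H$, and the projection $\kappa\colon\tilde G\to G$ is a surjective homomorphism restricting to an isomorphism on $\tilde H$; since from its second term on the lower central series of $\tilde G$ lies in $\tilde H$ and maps isomorphically onto that of $G$, nilpotency of $G$ forces $\tilde G$ to be nilpotent, exactly as at the end of Proposition~\ref{pro:boucas}. Now $\tilde U$ is a simply connected open surface, hence diffeomorphic to $\mathbb{R}^2$, and $\tilde\phi_{\tilde z}\in\tilde G$ has a non-empty compact fixed-point set. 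Theorem~\ref{cor:plane} then yields a global fixed point $\tilde w\in\tilde U$ of $\tilde G$, whose projection to $U\subset\mathbb{T}^2$ is fixed by every element of $H$ and by $\phi$, hence by $G$. This contradicts $\mathrm{Fix}(G)=\emptyset$ and shows that $G$ has a global fixed point.

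The hard part will be verifying that $\tilde\phi_{\tilde z}$ normalizes $\tilde H$ and that $\tilde G$ is nilpotent: the isotopy-lifting argument above is precisely what rules out the appearance of a spurious deck transformation, which would otherwise threaten both the normalization and the nilpotency (the danger being most acute when $U$ is an annulus and $\pi_1(U)$ is abelian, so that the bare requirement of commuting with deck transformations no longer pins down a lift). The remaining bookkeeping---that $U$ is preserved, that the identity lifts are well defined, and that the lifted maps are orientation-preserving $C^1$ diffeomorphisms of $\tilde U\cong\mathbb{R}^2$---should be routine once Lemmas~\ref{lem:nness} and \ref{lem:compact} are in hand.
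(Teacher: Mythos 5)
Your overall strategy --- pass to the universal covering $\tilde U$ of the component $U$ containing $z$, form the group $\tilde H$ of identity lifts, show that $\langle \tilde H,\tilde\phi_{\tilde z}\rangle$ is nilpotent and apply Theorem \ref{cor:plane} --- is exactly the paper's argument, but only in the case $\chi(U)<0$. The gap lies in the remaining topological types of $U$. The canonical identity lift of $h\in H$ to $\tilde U$, obtained by lifting an isotopy rel $\mathrm{supp}(\mu)$, is well defined (independent of the chosen isotopy) only if every loop of homeomorphisms based at the identity evaluates to a null-homotopic loop in $U$; this is what Hamstrom's theorem provides when $\chi(U)<0$ (the identity component of the homeomorphism group of $U$ is then contractible), and it is precisely why the paper's proof says ``the identity lift is well-defined since $\chi(U)<0$''. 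When $U$ is an annulus, $\mathrm{Homeo}_{0}(U)$ has the homotopy type of a circle, so two isotopies from $Id$ to $h$ may a priori end at lifts differing by a deck transformation of $\tilde U\to U$; then $\tilde H$ is not well defined as a set, and the identity $\tilde\phi_{\tilde z}\circ\tilde h\circ\tilde\phi_{\tilde z}^{-1}=\widetilde{\phi\circ h\circ\phi^{-1}}$ cannot be deduced ``by uniqueness of lifts'', because the right-hand side itself depends on a choice of isotopy. You correctly single out the annulus as the dangerous case, but the isotopy-lifting argument you propose does not by itself remove the danger.

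The paper avoids the issue by splitting into cases before lifting. If $U$ is a disc, no lifting is needed: one restricts $G$ to $U\cong\mathbb{R}^{2}$ and applies Theorem \ref{cor:plane} directly (your argument degenerates to this, since the deck group is trivial). If $U$ is an annulus, the hypothesis $z\in Q(\mathrm{supp}(\mu),\phi)$ is used a second time, beyond its role in Lemma \ref{lem:compact}: it forces the core curve of $U$ to be non-essential in $\mathbb{T}^{2}$, and then Lemma \ref{lem:noness} already yields a global fixed point, so the lifting construction is never carried out on an annulus. Only after these two cases are discharged does the paper assume $\chi(U)<0$ and run the argument you describe. To repair your proof you should either insert these two case distinctions, or actually prove that loops of homeomorphisms of $\mathbb{T}^{2}$ fixing $\mathrm{supp}(\mu)$ pointwise evaluate to null-homotopic loops in an annular component $U$ --- a statement that is not obviously true and that the paper never needs.
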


%%%%%%%%%%%%%%%%%%%%%%%%%%%%%%%%%%
%%%%%%%%%%%%%%%%%%%%%%%%%%%%%%%%%%

\begin{proof}
Let us denote \,$K=\mathrm{supp}(\mu)$. If \,$K \cap \mathrm{Fix}(\phi) \neq \emptyset$\,
then there is nothing to prove.
Now let  \,$z \in Q(K, \phi)$\, and suppose
\,$K \cap \mathrm{Fix}(\phi) = \emptyset$. Consider the
connected component \,$U$\, of \,${\mathbb T}^{2} \setminus K$\,
containing \,$z$. An orientation-preserving homeomorphism \,$\psi$\, defined in a
connected topological manifold \,$M$\, leaves the connected
components of \,$M \setminus \mathrm{Fix} (\psi)$\, invariant \cite{Brown-Kister}.
Since \,$K \subset \mathrm{Fix} (H)$\, we deduce that \,$U$\, is $H$-invariant.

If \,$U$\, is a topological disc then the
restriction of \,$G$\, to \,$U$\, defines
a nilpotent group such that \,$\mathrm{Fix}\big(\phi\big|_{U}\big)$\, is
a non-empty  compact set and we  obtain \,$\mathrm{Fix}(G) \neq \emptyset$\, by
Theorem \ref{cor:plane}.

Suppose that \,$U$\, is an annulus and let \,$\kappa'$\, be a core curve of the
annulus. The curve \,$\kappa'$\, is non-essential in \,${\mathbb T}^{2}$,
otherwise \,$z$\, does not belong to \,$Q(K, \phi)$.
Thus we obtain \,$\mathrm{Fix}(G) \neq \emptyset$\,  by Lemma \ref{lem:noness}.

Hence we can suppose from now on
that \,$U$\, has negative Euler characteristic. Let us
denote \,$\tilde{\phi}=\tilde{\phi}_{\tilde{z}}$\,
(cf. Definition \ref{def:q2}).
Following Lemma \ref{lem:compact} we can assume
that
\,$\mathrm{Fix}\big(\tilde{\phi}\big)$\, is a non-empty compact set.
Moreover we can admit that \,$h$\, is isotopic to the identity
relative to \,$K$\,
for any \,$h \in H$\, by Lemma \ref{lem:nness}.

Now, let \,$\tilde{H} \subset \mathrm{Diff}_{0}^{1}(\tilde{U})$\,
be the subgroup composed by the
identity lifts of elements of \,$H$. The identity lift is well-defined since \,$\chi (U)<0$.
Given that the elements of \,$\tilde{H}$\, are the identity lifts of elements in
\,$H$\, we have \,$\tilde{\phi} \, \tilde{H} \, \tilde{\phi}^{-1} = \tilde{H}$.
Consequently, \,$\tilde{H}$\, is a normal subgroup of  \,$\langle \tilde{H},\tilde{\phi} \rangle$.
Moreover, from \,$[\langle \tilde{H},\tilde{\phi} \rangle,\langle \tilde{H},\tilde{\phi} \rangle]\subset \tilde{H}$\,
we conclude that \,$\langle \tilde{H},\tilde{\phi} \rangle$\, is a nilpotent group
as we have done at the end of the proof of Proposition \ref{pro:boucas}.
Therefore \,$\langle \tilde{H},\tilde{\phi} \rangle$\, has a global fixed point by
Theorem \ref{cor:plane}  and we obtain
\,$\mathrm{Fix}(G) \neq \emptyset$.
\end{proof}

In order to complete the proof of Theorem \ref{teo:main4} it suffices to show
that \,$Q(\mathrm{supp}(\mu), \phi)$\, is non-empty.
The next definition is useful to work in the universal covering.

%%%%%%%%%%%%%%%%%%%%%%%%%%%%%%%%%%
%%%%%%%%%%%%%%%%%%%%%%%%%%%%%%%%%%
\vskip10pt

\begin{defi}
\label{def:qx}
Let \,$\mathfrak{f} \in \mathrm{Homeo}_{+}({\mathbb R}^{2})$.
Consider a   $\mathfrak{f}$-invariant compact set \,$\mathcal{O}$\, such that
\,$\mathrm{Fix}(\,\mathfrak{f}\,) \, \cap \, \mathcal{O} = \emptyset$.
Let us define a subset \,$P(\mathcal{O},\mathfrak{f}\,)$\, of \,$\mathrm{Fix}(\,\mathfrak{f}\,)$.
We say that \,$\mathfrak{p} \in \mathrm{Fix}(\,\mathfrak{f}\,)$\, does not belong to
\,$P(\mathcal{O},\mathfrak{f}\,)$\, if there exists a
path  \,$\gamma:[\,0\,,\infty) \to {\mathbb R}^{2}\setminus \mathcal{O}$\,
such that \,$\gamma(0)= \mathfrak{p}$\,,
\,$\lim_{t \to \infty} \|\gamma(t)\| =\infty$\,
and \,$\mathfrak{f} \circ \gamma$\, is properly homotopic to \,$\gamma$\,
in  \,${\mathbb R}^{2} \setminus \mathcal{O}$\, relative to \,$0$.
\end{defi}

%%%%%%%%%%%%%%%%%%%%%%%%%%%%%%%%%%
%%%%%%%%%%%%%%%%%%%%%%%%%%%%%%%%%%

\vskip10pt

In \cite[Proposition 5.3]{fhp01} Franks-Handel-Parwani prove the following result.

%%%%%%%%%%%%%%%%%%%%%%%%%%%%%%%%%%
%%%%%%%%%%%%%%%%%%%%%%%%%%%%%%%%%%
\vskip10pt

\begin{teo}
\label{teo:fhpg}
 Consider the setting in the above definition. If
  \,$\mathcal{O} \neq \emptyset$\, then
  \,$P(\mathcal{O},\mathfrak{f}\,) \neq \emptyset$.
\end{teo}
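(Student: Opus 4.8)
The plan is to argue by contradiction and to reduce the statement to the Brouwer plane translation theorem. First I would pass to the one-point compactification $S^{2}=\mathbb{R}^{2}\cup\{\infty\}$ and extend $\mathfrak{f}$ to an orientation-preserving homeomorphism $F$ of $S^{2}$ fixing $\infty$ (any homeomorphism of the plane is proper, so this extension is a homeomorphism). Since $\mathcal{O}$ is a non-empty compact $F$-invariant set with $\infty\notin\mathcal{O}$, the Brouwer plane translation theorem already guarantees $\mathrm{Fix}(\mathfrak{f})\neq\emptyset$; the content of the statement is therefore the existence of a \emph{particular} fixed point. Under the compactification a proper ray $\gamma:[0,\infty)\to\mathbb{R}^{2}\setminus\mathcal{O}$ with $\|\gamma(t)\|\to\infty$ becomes an arc $\alpha$ from $\mathfrak{p}$ to $\infty$ in $S^{2}\setminus\mathcal{O}$, and the condition ``$\mathfrak{f}\circ\gamma$ is properly homotopic to $\gamma$ relative to $0$'' translates into ``$F\circ\alpha$ is homotopic to $\alpha$ relative to the endpoints $\{\mathfrak{p},\infty\}$ in $S^{2}\setminus\mathcal{O}$''. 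Thus $\mathfrak{p}\notin P(\mathcal{O},\mathfrak{f}\,)$ exactly when such an unmoved arc to $\infty$ exists.

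Assume now, for contradiction, that $P(\mathcal{O},\mathfrak{f}\,)=\emptyset$; equivalently, every $\mathfrak{p}\in\mathrm{Fix}(F)\cap\mathbb{R}^{2}$ is joined to $\infty$ by an arc $\alpha_{\mathfrak{p}}$ in $S^{2}\setminus\mathcal{O}$ with $F\alpha_{\mathfrak{p}}$ homotopic to $\alpha_{\mathfrak{p}}$ rel endpoints. The complement of an arc in $S^{2}$ is an open disc, so $S^{2}\setminus\alpha_{\mathfrak{p}}$ is a disc containing $\mathcal{O}$. The key step is to use the homotopy $F\alpha_{\mathfrak{p}}\simeq\alpha_{\mathfrak{p}}$, which takes place away from $\mathcal{O}$, to realize an isotopy of $F$ supported in a neighbourhood of $\alpha_{\mathfrak{p}}\cup F\alpha_{\mathfrak{p}}$ disjoint from $\mathcal{O}$, after which the resulting homeomorphism fixes the arc, hence leaves the disc $S^{2}\setminus\alpha_{\mathfrak{p}}$ invariant while displacing $\mathfrak{p}$ onto its frontier. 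Performing such cancellations at every fixed point, I would aim to produce a homeomorphism $F'$ isotopic to $F$ relative to $\mathcal{O}$ (so $F'|_{\mathcal{O}}=F|_{\mathcal{O}}$ keeps $\mathcal{O}$ invariant and fixed-point free) whose restriction to $\mathbb{R}^{2}$ has no fixed point at all. Then $F'|_{\mathbb{R}^{2}}$ would be a fixed-point-free orientation-preserving homeomorphism of the plane possessing the non-empty compact invariant set $\mathcal{O}$, contradicting the Brouwer plane translation theorem and finishing the proof.

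The hard part is precisely this cancellation/realization step: converting the homotopy-theoretic triviality of the rays into an honest dynamical conclusion — an invariant disc containing $\mathcal{O}$ on which the map is fixed-point free — is exactly where Brouwer theory must be invoked, and I expect to need translation-arc and Brouwer-line techniques, or the planar fixed point theory of Handel and Le Calvez, rather than a naive isotopy argument. Two further difficulties must be confronted. First, $\mathrm{Fix}(\mathfrak{f}\,)$ need not be finite or even isolated, so the ``remove one fixed point at a time'' heuristic has to be replaced by a single global isotopy, or preceded by a reduction to a compactly supported model. Second, the complement $\mathbb{R}^{2}\setminus\mathcal{O}$ can be topologically wild: when $\mathcal{O}$ is, say, a Cantor set the complement is connected, so every fixed point admits escaping rays and the entire difficulty migrates into whether such rays can be chosen unmoved. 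Controlling the interaction between these rays and the intricate topology of $\mathbb{R}^{2}\setminus\mathcal{O}$ is where I anticipate the substantive work to lie.
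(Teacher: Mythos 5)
You should first be aware that the paper does not prove Theorem \ref{teo:fhpg} at all: it is imported verbatim from Franks--Handel--Parwani \cite[Proposition 5.3]{fhp01}, and in the only place it is used (Lemma \ref{lem:main}) the compact set $\mathcal{O}$ is a finite orbit, in which case the statement is Gambaudo's linking theorem \cite{Gamba}. So your attempt is a proposed new proof of a cited theorem, and it must be measured against the depth of that theorem. Your preliminary reformulation is fine: a proper homotopy rel $0$ in $\mathbb{R}^{2}\setminus\mathcal{O}$ does extend to a homotopy rel $\{\mathfrak{p},\infty\}$ in $S^{2}\setminus\mathcal{O}$, and if one could really exhibit a fixed-point-free $F'$ of the plane preserving the non-empty compact set $\mathcal{O}$, Brouwer's plane translation theorem would give the contradiction.

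The gap is the production of $F'$, which is the entire content of the theorem and which you concede you have not carried out. Concretely: (1) the local move you describe does not remove the fixed point --- if the perturbed map preserves the arc $\alpha_{\mathfrak{p}}$, the endpoint $\mathfrak{p}$ of that arc can perfectly well remain fixed, and nothing prevents the perturbation from creating new fixed points nearby; (2) $\mathrm{Fix}(\mathfrak{f})$ need not be finite or even totally disconnected, so ``performing such cancellations at every fixed point'' is not an induction one can run, and no global isotopy is exhibited; (3) most importantly, the implication ``every fixed point admits an unmoved escaping ray $\Rightarrow$ all fixed points can be removed by an isotopy rel $\mathcal{O}$'' is a Nielsen-type removability statement that is essentially equivalent to the theorem itself; homotopic triviality of a ray at a fixed point carries no index information, and removability of fixed points is governed by index and linking data, not by the existence of an unmoved ray alone. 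The actual proof in \cite{fhp01} runs along entirely different lines: it reduces to a periodic orbit, puts a complete hyperbolic structure on the complement of the orbit, and uses Handel's Brouwer theory (translation arcs, lifts to the universal cover and their extensions to the circle at infinity) to find a fixed point linked with the orbit. None of that machinery is supplied or replaced in your sketch, so what you have is a plan with the essential step missing rather than a proof.
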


%%%%%%%%%%%%%%%%%%%%%%%%%%%%%%%%%%
%%%%%%%%%%%%%%%%%%%%%%%%%%%%%%%%%%

 The next lemma completes the proof of Theorem \ref{teo:main4}. In its proof we
are going to use the above theorem only when the compact set is
a finite orbit. In such a case the result is due to Gambaudo \cite{Gamba}.

%%%%%%%%%%%%%%%%%%%%%%%%%%%%%%%%%%
%%%%%%%%%%%%%%%%%%%%%%%%%%%%%%%%%%
\vskip10pt

\begin{lem}\label{lem:main}
Let \,$\phi \in \mathrm{Diff}_{0}^{1}({\mathbb T}^{2})$.
Suppose  there exist a $\phi$-invariant ergodic probability measure
\,$\mu$\, and a lift  \,$\tilde{\phi} \in \mathrm{Diff}^{1}_{0}({\mathbb R}^{2})$\,
such that \,$\rho_{\mu}\big(\tilde{\phi}\big)=(0\,,0)$. If
\,$\mathrm{Fix}(\phi) \cap \mathrm{supp}(\mu)=\emptyset$\, then
\,$Q(\mathrm{supp}(\mu), \phi) \neq \emptyset$.
\end{lem}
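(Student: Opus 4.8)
The plan is to pass to the universal covering, fix the lift $\tilde{\phi}$ with $\rho_{\mu}(\tilde{\phi})=(0,0)$, and transfer the conclusion of Theorem \ref{teo:fhpg} (its set $P$) down to the set $Q$ of Definition \ref{def:q}. Write $K=\mathrm{supp}(\mu)$. The hypothesis $\mathrm{Fix}(\phi)\cap K=\emptyset$ means that every lift of a point of $K$ avoids $\mathrm{Fix}(\tilde{\phi})$, and also that $Q(K,\phi)\subset\mathrm{Fix}(\phi)\setminus K$. First I would produce a non-empty compact $\tilde{\phi}$-invariant set $\mathcal{O}\subset\pi^{-1}(K)$; then automatically $\mathrm{Fix}(\tilde{\phi})\cap\mathcal{O}=\emptyset$, so Theorem \ref{teo:fhpg} furnishes a point $\mathfrak{p}\in P(\mathcal{O},\tilde{\phi})$. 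The lemma will follow once I check that $z:=\pi(\mathfrak{p})$ lies in $Q(K,\phi)$, since $\mathfrak{p}\in\mathrm{Fix}(\tilde{\phi})$ forces $z\in\mathrm{Fix}(\phi)\setminus K$.

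\emph{Construction of $\mathcal{O}$.} The displacement $\tilde{\phi}-\mathrm{Id}$ is $\mathbb{Z}^{2}$-periodic, hence descends to a map $\mathbb{T}^{2}\to\mathbb{R}^{2}$ whose $\mu$-average is $\rho_{\mu}(\tilde{\phi})=(0,0)$. Regarding it as an $\mathbb{R}^{2}$-valued cocycle over the ergodic system $(K,\phi,\mu)$, the vanishing of its mean forces recurrence of the planar Birkhoff sums $\tilde{\phi}^{n}(\tilde{x})-\tilde{x}$: for $\mu$-almost every $x$ and a lift $\tilde{x}$ of $x$ there are $n_{k}\to\infty$ with $\tilde{\phi}^{\,n_{k}}(\tilde{x})\to\tilde{x}$ (an Atkinson--Poincar\'e recurrence argument, available precisely because the cocycle takes values in $\mathbb{R}^{2}$). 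From such a recurrent orbit I would extract the compact $\tilde{\phi}$-invariant set $\mathcal{O}\subset\pi^{-1}(K)$. In the cases that actually occur $\mathcal{O}$ may be taken to be a finite $\tilde{\phi}$-orbit — for instance when $K$ is finite, $\mu$ is the uniform measure on a periodic orbit of zero rotation vector, whose lift is a genuine finite orbit — so that only the finite-orbit case of Theorem \ref{teo:fhpg}, due to Gambaudo, is needed.

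\emph{The correspondence $P\Rightarrow Q$.} Assume, for contradiction, that $z\notin Q(K,\phi)$. By Definition \ref{def:q} there is an essential simple closed curve $\varsigma$ in $\mathbb{T}^{2}\setminus K$ based at $z$ with $[\phi\circ\varsigma]=[\varsigma]$ in $\pi_{1}(\mathbb{T}^{2}\setminus K,z)$; let $v\in\mathbb{Z}^{2}\setminus\{0\}$ be its primitive homology class. Lifting $\varsigma$ to the arc $\tilde{\varsigma}$ from $\mathfrak{p}$ to $\mathfrak{p}+v$ and setting $\gamma:=\tilde{\varsigma}*(T_{v}\tilde{\varsigma})*(T_{2v}\tilde{\varsigma})*\cdots$ gives a proper ray starting at $\mathfrak{p}$, escaping to infinity, and contained in $\mathbb{R}^{2}\setminus\pi^{-1}(K)\subset\mathbb{R}^{2}\setminus\mathcal{O}$ (here the inclusion $\mathcal{O}\subset\pi^{-1}(K)$ is essential). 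Since $\tilde{\phi}(\mathfrak{p})=\mathfrak{p}$, the based homotopy $\phi\circ\varsigma\simeq\varsigma$ lifts to a homotopy $\tilde{\phi}\circ\tilde{\varsigma}\simeq\tilde{\varsigma}$ rel $\mathfrak{p}$ inside $\mathbb{R}^{2}\setminus\pi^{-1}(K)$, with both arcs ending at $\mathfrak{p}+v$; translating it by $jv$ and using $\tilde{\phi}\circ T_{jv}=T_{jv}\circ\tilde{\phi}$, the translated pieces concatenate to a homotopy $\tilde{\phi}\circ\gamma\simeq\gamma$ rel $0$ in $\mathbb{R}^{2}\setminus\mathcal{O}$. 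As the $j$-th elementary homotopy stays in a fixed compact set translated by $jv$, the whole homotopy is proper. This contradicts $\mathfrak{p}\in P(\mathcal{O},\tilde{\phi})$, and therefore $z\in Q(K,\phi)$.

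\emph{The main obstacle.} The routine portion is the correspondence $P\Rightarrow Q$: it is a standard lifting of a based homotopy, the only subtlety being that one must verify that the concatenated homotopy of rays is \emph{proper}, which follows from the commutation $\tilde{\phi}\circ T_{v}=T_{v}\circ\tilde{\phi}$. The genuinely delicate step is the construction of $\mathcal{O}$: because $\pi^{-1}(K)$ is non-compact, the vanishing of $\rho_{\mu}(\tilde{\phi})$ must be exploited to prevent orbits from escaping to infinity and to manufacture an honest compact invariant set (a finite orbit in the reduced cases). This is exactly where the two-dimensional recurrence coming from the zero rotation vector is indispensable.
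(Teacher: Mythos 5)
Your overall architecture --- produce a non-empty compact invariant set $\mathcal{O}\subset\pi^{-1}(K)$ in the plane, invoke Theorem \ref{teo:fhpg} to get $P(\mathcal{O},\cdot)\neq\emptyset$, and transfer a point of $P$ to a point of $Q$ by lifting the based homotopy of the curve from Definition \ref{def:q} to a proper homotopy of rays --- is the same as the paper's, and your $P\Rightarrow Q$ step is essentially the argument given there. The genuine gap is in the construction of $\mathcal{O}$. You assert that the vanishing of $\rho_{\mu}(\tilde{\phi})$ forces recurrence of the planar Birkhoff sums $\tilde{\phi}^{n}(\tilde{x})-\tilde{x}$, ``available precisely because the cocycle takes values in $\mathbb{R}^{2}$''. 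This is backwards: Atkinson's recurrence theorem for zero-mean ergodic cocycles is a one-dimensional statement, and it fails for $\mathbb{R}^{2}$-valued cocycles in general. The paper extracts only one direction of recurrence from the measure-theoretic input (Koropecki--Tal's Lemma 4.4 gives $\langle\tilde{\phi}^{n_{k}}(\tilde{z})-\tilde{z},v\rangle\to 0$ for a single vector $v$), and the crucial point is that $v$ is not arbitrary: it is the class of the essential curve $\varsigma_{0}$ witnessing that some fixed point $p$ fails to belong to $Q(K,\phi)$ (if no such curve exists one is already done). The transverse component of the displacement is then controlled topologically, because $[\phi\circ\varsigma_{0}]=[\varsigma_{0}]$ forces $\tilde{\phi}$ to preserve the complementary components of $\pi^{-1}(\varsigma_{0}([0,1]))$, and only the combination of the two facts yields $\tilde{\phi}^{n_{k}}(\tilde{z})\to\tilde{z}$. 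Without this interplay between the topology and the ergodic theory, your two-dimensional recurrence claim is unsupported.

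Second, even granting recurrence of a lifted point $\tilde{q}\in\pi^{-1}(K)$, you do not actually obtain a compact $\tilde{\phi}$-invariant set: the closure of a recurrent planar orbit need not be compact, and your remark that $\mathcal{O}$ may be taken to be a finite orbit ``for instance when $K$ is finite'' does not address the generic case where $K$ is infinite. The paper resolves this by a local $C^{0}$ perturbation $H\circ\tilde{\phi}$, supported in a small disk around $\tilde{q}$, that closes the recurrent orbit into a genuine finite orbit $\mathcal{O}\subset\pi^{-1}(K)$ while keeping $\mathrm{Fix}(H\circ\tilde{\phi})=\mathrm{Fix}(\tilde{\phi})$; Theorem \ref{teo:fhpg} (in Gambaudo's finite-orbit case) is then applied to the perturbed map, and the inclusion $\mathrm{Fix}(\tilde{\phi})\setminus\pi^{-1}(Q(K,\phi))\subset\mathrm{Fix}(H\circ\tilde{\phi})\setminus P(\mathcal{O},H\circ\tilde{\phi})$ is proved by checking that the supporting disk can be chosen small enough, uniformly over the compact set $\mathrm{Fix}(\phi)\setminus Q(K,\phi)$, so that the perturbation does not meet the lifted homotopies used in the transfer. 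Your proposal omits both the perturbation and this uniformity, so the bridge from recurrence to a usable $\mathcal{O}$ is missing.
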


%%%%%%%%%%%%%%%%%%%%%%%%%%%%%%%%%%
%%%%%%%%%%%%%%%%%%%%%%%%%%%%%%%%%%

\begin{proof}
Since \,${\rho}_{\mu}\big(\tilde{\phi}\big)=(0\,,0)$\,
and \,$\mu$\, is $\phi$-ergodic
we have that \,$\text{Fix}(\phi)\neq\emptyset$\, by Franks Theorem \ref{teo:franks}.
Denote \,$K=\mathrm{supp}(\mu)$\, and let \,$p$\, be a fixed point of \,$\phi$.
If \,$p$\, belongs to \,$Q(K,\phi)$\,
then there
is nothing to prove.

Suppose \,$p \notin Q(K,\phi)$\, and consider the simple closed curve
\,$\varsigma_{0}$\, in \,${\mathbb T}^{2} \setminus K$\, provided by Definition \ref{def:q}. This loop is based on \,$p \in \text{Fix}(\phi)$\, and it is essential in \,$\mathbb{T}^{2}$. Moreover,  \,$\varsigma_{0}$\, and
\,$\phi \circ \varsigma_{0}$\, are homotopic in \,$\mathbb{T}^{2} \setminus K$\, relative to the base point \,$p$.
Let \,$v \in {\mathbb Z}^{2}$\, be the nonzero vector such that
the translation \,$T_{v}$\, is the covering transformation
for \,$\pi:\mathbb{R}^{2}\rightarrow \mathbb{T}^{2}$\,  associated to \,$\varsigma_{0}$.
A result by Koropecki and Tal (Lemma 4.4 of \cite{koro01}),
 inspired in Atkinson's lemma \cite{Atk},
implies that there exists a total measure set
\,$E \subset K \subset {\mathbb T}^{2}$\,
such that for any
\,$z \in E$\, there exists an increasing sequence \,$(n_{k})_{k\geq1}$\, of
positive integers
such that
\begin{align}\label{KT:01}
 \phi^{n_{k}}(z) \to z \quad \mathrm{and} \quad
<\tilde{\phi}^{n_{k}}(\tilde{z}) - \tilde{z} \, , v > \, \to 0
\end{align}
where \,$<\,,>$\, is the canonical inner product and \,$\tilde{z}$\, is any lift of \,$z$.
Moreover, since \,$\varsigma_{0}$\,  is essential in \,$\mathbb{T}^{2}$\,  and
\,$[\phi\circ\varsigma_{0}]=[\varsigma_{0}]$\, in
\,$\pi_{1}({\mathbb T}^{2} \setminus K,p)$\, we conclude that \,$\tilde{z}$\, and
\,$\tilde{\phi}(\tilde{z})$\, are in the same  connected component of
\,${\pi}^{-1}\big(\mathbb{T}^{2} \setminus \varsigma_{0}\big([\,0\,,1\,]\big)\big)$\, for any
\,$\tilde{z} \in {\pi}^{-1}(K)$\,.
In particular we obtain from \eqref{KT:01} that
\,$\tilde{\phi}^{n_{k}}(\tilde{z}) \to \tilde{z}$\, whenever \,$\pi(\tilde{z})\in E$.
Therefore
the set of points in \,$K$\, whose preimages are
$\tilde{\phi}$-recurrent has total measure.

We will prove the lemma by using Theorem \ref{teo:fhpg}. For this we consider a local small $C^{0}$-\,perturbation of \,$\tilde{\phi}$\, without changing the set of fixed points and getting  a new non-empty compact set which is invariant by the perturbation. This new invariant compact set for the perturbation  will be a periodic orbit contained in \,$\pi^{-1}(K)$\,.

Let us fix \,$\tilde{q} \in \pi^{-1}(K)$\,  a $\omega$-recurrent point for \,$\tilde{\phi}$.
We will choose \,$H\in\text{Homeo}_{0}(\mathbb{R}^{2})$\, whose support is
contained in a small compact disk  \,$\tilde{D}$\, containing \,$\tilde{q}$\, in its interior.
We are always assuming that \,$\tilde{D}$\, is a lift of a small disk \,$D$\, contained in \,$\mathbb{T}^{2}$.
Furthermore \,$H$\, is such that\,:

\begin{itemize}
\item
the orbit \,$\mathcal{O}$\, of \,$\tilde{q}$\, by  \,$H \circ \tilde{\phi}$\,
is finite and \,$\mathcal{O} \subset \pi^{-1}(K)$\,;

\item
$\text{Fix}\big(\tilde{\phi}\big) = \text{Fix}\big(H \circ \tilde{\phi}\big)$\,.

\end{itemize}

\noindent
Of course the choice of \,$H$\, depends on the compact disk \,$D$\, which can be made
arbitrarily small.

 Now, let us remind the reader that Franks proves the existence of a fixed point for
\,$\phi$\,  proving that \,$\tilde{\phi}$\, has a fixed point
(cf. beginning of the proof of Theorem 3.5 in \cite{f22}).
In the same spirit,
to finish the proof of the lemma it suffices to conclude that
\begin{align}\label{con:cont}
\mathrm{Fix}\big(\tilde{\phi}\big) \setminus \pi^{-1}\big(Q(K, \phi)\big) \ \subset \
\mathrm{Fix}\big(H \circ \tilde{\phi}\big) \setminus P({\mathcal O}, H \circ \tilde{\phi}).
\end{align}
From the above relation   we obtain
\,$Q(K, \phi) \neq\emptyset$\, as required by the lemma since we have
\,$P({\mathcal O}, H \circ \tilde{\phi})\neq\emptyset$\, by Theorem \ref{teo:fhpg}.

To prove \eqref{con:cont} for a suitable choice
of \,$H$\, and \,$D$\, we do the following.  Consider
\,$\tilde{z}_{1} \in \mathrm{Fix}\big(\tilde{\phi}\big) \setminus \pi^{-1}\big(Q(K, \phi)\big)$.
 The point \,$z_{1}=\pi(\tilde{z}_{1})$\, is in
 \,$\text{Fix}(\phi) \setminus Q(K, \phi)$. Consider a simple closed curve \,$\varsigma$\, in
 \,$\mathbb{T}^{2}\setminus K$\, given by Definition \ref{def:q}. We remind that
 \,$\varsigma$\, is based on \,$z_{1}$\,, is essential in \,$\mathbb{T}^{2}$\, and
 \,$[\phi \circ \varsigma]=[\varsigma]$\, in \,$\pi_{1}(\mathbb{T}^{2}\setminus K,z_{1})$\,.
Let \,$\tilde{\varsigma}:[\,0\,,1\,] \rightarrow \mathbb{R}^{2}\setminus \pi^{-1}(K)$\, be the
lift of \,$\varsigma$\, with \,$\tilde{\varsigma}(0)=\tilde{z_{1}}$\, and let \,$T_{w}$\, be the
translation given by the covering transformation for
\,$\pi:\mathbb{R}^{2} \rightarrow \mathbb{T}^{2}$\,
associated to \,$\varsigma$.

We extend \,$\tilde{\varsigma}$\, to \,$[\,0\,,\infty)$\, by defining
\,$\tilde{\varsigma}(t) = T_{w}^{[t]}\big(\tilde{\varsigma}(t-[t])\big)$\, where
\,$[t]$\, denotes the greatest integer less than or equal to \,$t$. It is a continuous path
such that \,$\|\tilde{\varsigma}(t)\| \rightarrow \infty$\,
%for the usual norm on \,$\mathbb{R}^{2}$\,
when \,$t$\, goes to infinity. The loops \,$\varsigma$\, and \,$\phi \circ \varsigma$\,
are homotopic in \,$\mathbb{T}^{2} \setminus K$\,  relative to the base point \,$z_{1}$. Thus the paths
\,$\tilde{\varsigma}\big|_{[0,1]}$\, and
\,$\big(\tilde{\phi} \circ \tilde{\varsigma}\big)\hskip-1pt\big|_{[0,1]}$\, are homotopic relative to the ends \,$\tilde{z}_{1} \,, T_{w}(\tilde{z}_{1})$\, by a homotopy
\,$F: [\,0\,,1\,] \times [\,0\,,1\,] \to {\mathbb R}^{2} \setminus \pi^{-1}(K)$\,. We extend \,$F$\,
to \,$[\,0\,,\infty) \times [\,0\,,1\,]$\, by defining
\,$F(t,s)=T_{w}^{[t]}\big(F(t-[t],s)\big)$. In this way the extended paths \,$\tilde{\varsigma}$\,
and \,$\tilde{\phi} \circ \tilde{\varsigma}$\, are properly homotopic in
\,$\mathbb{R}^{2} \setminus \pi^{-1}(K)$\, relative to \,$0\in [\,0\,,\infty)$\,.

 Furthermore, since the loops \,$\varsigma$\, and \,$\phi \circ \varsigma$\, are homotopic
in \,$\mathbb{T}^{2} \setminus K$\, we conclude that the extended homotopy
\,$F$\,  betweeen the extended curves \,$\tilde{\varsigma}$\, and \,$\tilde{\phi} \circ \tilde{\varsigma}$\, does not intersect
the  set \,$\pi^{-1}\big(\pi(\tilde{D})\big)$\,
 if \,$D$\, is sufficiently small. Thus
\,$\tilde{\phi} \circ T_{u} \circ \tilde{\varsigma} =
\big(H \circ \tilde{\phi}\big)\circ T_{u} \circ \tilde{\varsigma}$\,
and the extended homotopy \,$T_{u} \circ F$\, is a proper homotopy in \,$\mathbb{R}^{2} \setminus \pi^{-1}(K)$\,
between the extended curves \,$T_{u} \circ \tilde{\varsigma}$\, and
\,$\big(H \circ \tilde{\phi}\big)\circ T_{u} \circ \tilde{\varsigma}$\, for every \,$u\in\mathbb{Z}^{2}$.
Since \,$\text{Fix}\big(\tilde{\phi}\big)=\text{Fix}\big(H \circ \tilde{\phi}\big)$\,
by construction we obtain
$$\pi^{-1}(z_{1}) \subset \text{Fix}\big(H \circ \tilde{\phi}\big) \setminus
P(\mathcal{O},H \circ \tilde{\phi})$$
following the Definition \ref{def:qx}.

We remind the reader that \,$\tilde{D}$\,
is the lift containing \,$\tilde{q}$\, of a convenient small disk in
\,$\mathbb{T}^{2}$ containing \,$q=\pi(\tilde{q})$\, in its interior.

Now, we need to make a choice
of \,$D$\, that does not depend on
\,$z\in \text{Fix}(\phi) \setminus Q(K,\phi)$\,. For this we remark that if the above construction holds true
for some  \,$z\in \text{Fix}(\phi) \setminus Q(K,\phi)$\,  then the same is true for all the  points
of \,$\text{Fix}(\phi) \setminus Q(K,\phi)$\, in a small neighborhood
of \,$z$ even without changing the compact disk \,$D$. Moreover we have that
\,$\text{Fix}(\phi) \setminus Q(K,\phi)$\, is a compact set since \,$\text{Fix}(\phi) \cap K=\emptyset$.
The existence of a common choice of \,$D$\, for any
\,$z\in \text{Fix}(\phi) \setminus Q(K,\phi)$\, is a consequence of the compactness of
\,$\text{Fix}(\phi) \setminus Q(K,\phi)$\,, completing the proof of the lemma.
\end{proof}

%\vglue10pt

 Now we prove the Theorems \ref{teo:main4} , \ref{teo:main2} and \ref{teo:main}.

%%%%%%%%%%%%%%%%%%%%%%%%
\vskip10pt
\subsection*{Proof of Theorem \ref{teo:main4}}
%%%%%%%%%%%%%%%%%%%%%%%%

If \,$\text{Fix}(\phi) \cap \text{supp}(\mu)\neq\emptyset$\, then \,$G$\, has a global fixed point since \,$\text{supp}(\mu) \subset \text{Fix}(H)$\, and \,$G=\langle H,\phi \rangle$\,.
If  \,$\text{Fix}(\phi) \cap \text{supp}(\mu)=\emptyset$\, then Theorem \ref{teo:main4} follows from
Lemma \ref{lem:main} and Proposition \ref{prop:main}.

%%%%%%%%%%%%%%%%%%%%%%%%
\vskip10pt
\subsection*{Proof of Theorem \ref{teo:main2}}
%%%%%%%%%%%%%%%%%%%%%%%%

We can suppose that \,$G$\, is finitely generated.
Let us show that \,$\mathrm{Fix} (Z^{(k)}(G)) \neq \emptyset$\, by induction on
\,$k$\,;
it is obvious for \,$k=0$.
Suppose \,$\mathrm{Fix} (Z^{(k)}(G)) \neq \emptyset$.
Since \,$G$\, is a finitely generated nilpotent group, it is polycyclic
(cf. \cite[Theorem 17.2.2]{Karga}) and as a consequence every subgroup
of \,$G$\, is finitely generated. Let \,$\{g_{1},\hdots,g_{n}\}$\, be a generator set
of \,$Z^{(k+1)}(G)$. We denote \,$H^{0}= Z^{(k)}(G)$\, and
\,$H^{j} = \langle Z^{(k)}(G), g_{1}, \hdots, g_{n} \rangle$ for any $0 < j \leq n$.
Notice that \,$H^{j}$\, is a normal subgroup of \,$H^{j+1}=\langle H^{j}, g_{j+1} \rangle$\,
for any \,$0 \leq j <n$.

Let us show that \,$\mathrm{Fix}(H^{j})$\, is a non-empty set for any  \,$j \geq 0$\,
by induction on \,$j$. The result is clear for  \,$j=0$.
Since \,$H^{j}$\, is normal in \,$H^{j+1}$, the set \,$\mathrm{Fix}(H^{j})$\, is \,$g_{j+1}$-invariant.
Let \,$\mu$\, be a \,$g_{j+1}$-invariant ergodic measure with
\,$\mathrm{supp}(\mu) \subset \mathrm{Fix}(H^{j})$.
The diffeomorphism \,$g_{j+1}$\, is irrotational; hence
we deduce \,$\mathrm{Fix}(H^{j+1}) \neq \emptyset$\,
by Theorem \ref{teo:main4}. We obtain
$\mathrm{Fix}(H^{j}) \neq \emptyset$ for any $0 \leq j \leq n$ and in particular
$\mathrm{Fix}(Z^{(k+1)}(G)) \neq \emptyset$.
As a consequence \,$\mathrm{Fix}(Z^{(k)}(G))$\, is a non-empty set for any \,$k \in {\mathbb Z}_{\geq 0}$.
Since \,$G$\, is nilpotent, we deduce \,$\mathrm{Fix}(G) \neq \emptyset$.

%%%%%%%%%%%%%%%%%%%%%%%%
\vskip10pt
\subsection*{Proof of Theorem \ref{teo:main}}
%%%%%%%%%%%%%%%%%%%%%%%%
%\vskip10pt

Let us prove the result for
\,$G \subset \mathrm{Diff}^{1}({\mathbb T}^{2})$. The remaining cases
are analogous. The group \,$G'''$\, is irrotational by
Theorem \ref{teo:main3}.
Thus \,$\mathrm{Fix}(G''')$\, is non-empty
by Theorem \ref{teo:main2}.

%%%%%%%%%%%%%%%%%%%%%
\vskip30pt
\section{
Replacing \,$\mathbb{T}^{2}$\, with the  Klein bottle, the compact annulus or
the M\"obius strip
}
%\vskip10pt
%%%%%%%%%%%%%%%%%%%%%

In this  section we remark that almost all the theorems in this article have  versions for
 the Klein bottle \,$\mathbb{K}^{2}$, the compact annulus
\,$\mathbb{S}^{1}\times [0\,,1]$\, and
the compact M\"obius strip.

First, let us remark that an isotopic to the identity \,$\mathbb{S}^{1}$-homeomorphism   has a trivial rotation number if and only if it has a fixed point. In this context
the following result can be interpreted as a version of Theorems
\ref{teo:main4} and \ref{teo:main2}.
Notice that solvable (and in particular nilpotent) groups are amenable.

%%%%%%%%%%%%%%%%%%%%%%%%%%%%%%%%%%
\vskip10pt
\begin{pro}\label{T1:1:circle}
Let \,$G$\, be an amenable subgroup of
\,$\mathrm{Homeo}_{+}({\mathbb S}^{1})$\,
such that every element of \,$G$\, has a non-empty fixed-point set.
Then \,$G$\, has a global fixed point.
\end{pro}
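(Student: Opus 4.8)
The plan is to use amenability to manufacture a $G$-invariant Borel probability measure on $\mathbb{S}^{1}$ and then extract a common fixed point from the way this measure organizes the dynamics. Since $\mathbb{S}^{1}$ is compact and $G$ is amenable, the fixed-point property of amenable groups acting on compact convex sets (the very tool invoked in the proof of Theorem \ref{teo:main6}) produces a $G$-invariant probability measure $\mu$. The key feature I would exploit is that each $g\in G$ individually preserves $\mu$ and is orientation-preserving, so its rotation number with respect to $\mu$ equals its Poincar\'e rotation number; and for an orientation-preserving circle homeomorphism this number vanishes precisely when the homeomorphism has a fixed point. By hypothesis every element of $G$ has a fixed point, so every element has rotation number zero.

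Next I would pass to the distribution function $h\colon \mathbb{S}^{1}\to \mathbb{R}/\mathbb{Z}$ of $\mu$, given by letting $h(x)$ be the $\mu$-measure of the positively oriented arc from a fixed basepoint to $x$. When $\mu$ is atomless this $h$ is continuous, weakly monotone and of degree one, and the invariance of $\mu$ yields, for each $g\in G$ separately, a semiconjugacy $h\circ g = R_{\theta}\circ h$ where $R_{\theta}$ is the rotation by the rotation number $\theta$ of $g$. As that rotation number is zero, this reduces to $h\circ g = h$, so every $g$ preserves each fiber $h^{-1}(t)$. A weakly monotone degree-one map has only countably many nondegenerate fibers, namely the closed arcs on which $h$ is constant, which are exactly the complementary gaps of $\mathrm{supp}(\mu)$; hence uncountably many fibers are singletons. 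If $h^{-1}(t)=\{p\}$, then $h(g(p))=h(p)$ forces $g(p)\in h^{-1}(h(p))=\{p\}$, so $g(p)=p$ for every $g\in G$, and $p$ is a global fixed point.

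The main obstacle is that $\mu$ need be neither of full support nor atomless, so $h$ is only a monotone semiconjugacy and not a conjugacy onto a group of rotations. The lack of full support is already absorbed by the point-fiber argument above, so only atoms remain to be treated. If $\mu$ has atoms, I would take $A$ to be the nonempty finite set of atoms of maximal mass; invariance of $\mu$ makes $A$ a finite $G$-invariant set. Listing the points of $A$ in cyclic order, an orientation-preserving homeomorphism permutes them as a cyclic rotation $p_{i}\mapsto p_{i+k}$, and if $k\not\equiv 0$ then the arc $(p_{i},p_{i+1})$ is carried to the disjoint arc $(p_{i+k},p_{i+1+k})$, so $g$ can have no fixed point. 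Since each $g\in G$ does have one, necessarily $k\equiv 0$, whence $G$ fixes $A$ pointwise and any point of $A$ is a global fixed point. Combining the atomless and atomic cases completes the argument.
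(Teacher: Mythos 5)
Your proof is correct, but after the common first step it follows a genuinely different route from the paper's. Both arguments begin by using amenability (acting affinely on the compact convex set of Borel probability measures) to produce a $G$-invariant probability measure $\mu$ on $\mathbb{S}^{1}$. The paper then concludes in two lines: for any $\phi\in\mathrm{Homeo}_{+}(\mathbb{S}^{1})$ with $\mathrm{Fix}(\phi)\neq\emptyset$, the restriction of $\phi$ to each connected component of $\mathbb{S}^{1}\setminus\mathrm{Fix}(\phi)$ is conjugate to a translation of an interval, so every point there is wandering and carries no mass of any $\phi$-invariant measure; hence $\mathrm{supp}(\mu)\subset\mathrm{Fix}(\phi)$ for every $\phi\in G$, i.e. $\emptyset\neq\mathrm{supp}(\mu)\subset\mathrm{Fix}(G)$. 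You instead run the classical semiconjugacy machinery in the atomless case (vanishing rotation number from the fixed-point hypothesis, the distribution function $h$ of $\mu$, the identity $h\circ g=h$ via invariance of the rotation number under monotone degree-one semiconjugacy, and the observation that all but countably many fibres of $h$ are singletons), and a separate cyclic-order argument on the finite $G$-invariant set of atoms of maximal mass in the atomic case. Both cases are handled correctly and they are exhaustive, so the argument is complete. What the paper's approach buys is brevity and, with no case distinction, the slightly sharper statement that the entire support of every $G$-invariant measure consists of global fixed points; your fibre/atom analysis recovers essentially the same localization (singleton fibres and atoms lie in $\mathrm{supp}(\mu)$) but at the cost of invoking heavier circle-dynamics background and splitting into cases.
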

%\vskip5pt
%%%%%%%%%%%%%%%%%%%%%%%%%%%%%%%%%%

\begin{proof}
Since \,$G$\, is amenable, there exists a $G$-invariant Borel probability measure \,$\mu$.
Given \,$\phi \in G$, the dynamics of \,$\phi$\, in every connected component of
\,$\mathbb{S}^{1} \setminus \mathrm{Fix}(\phi)$\, is conjugated to a translation.
Thus no point of \,$\mathbb{S}^{1} \setminus \mathrm{Fix}(\phi)$\,  can be in the support of a \,$\phi$-invariant
Borel probability measure. As a consequence
we obtain \,$\mathrm{supp}(\mu) \subset \mathrm{Fix}(G)$\,  and hence \,$\mathrm{Fix}(G)$\, is
non-empty.
\end{proof}

%\vglue30pt

The versions of Theorems \ref{teo:main3} and \ref{teo:main}  for the circle
are very elementary.
 In this context  given an amenable
 subgroup \,$G$\,  of \,$\mathrm{Homeo}(\mathbb{S}^{1})$\, then\,:
\begin{itemize}
\item[$(i)$]
$G'$\, is irrotational when \,$G \subset \mathrm{Homeo}_{+}(\mathbb{S}^{1})$\,;

\item[$(ii)$]
$G''$\, is irrotational when \,$G \subset \mathrm{Homeo}(\mathbb{S}^{1})$.
\end{itemize}
Indeed the rotation number for elements of $G$ coincides with the rotation number
with respect to a $G$-invariant measure. Thus the map sending  each element of a subgroup $G$
of $\mathrm{Homeo}_{+}(\mathbb{S}^{1})$ to its
Poincar\'{e}'s rotation number is a morphism of groups by an analogue of Lemma \ref{lem:mor} for the circle.
Thus every element of $G'$ has vanishing rotation number.
Consequently, in cases $(i)$ and $(ii)$ above, there exist
global fixed points by Proposition \ref{T1:1:circle}.

\vglue10pt

In what follows let \,$\tau:\mathbb{T}^{2}\rightarrow\mathbb{K}^{2}$\, be the \,$2$-fold
orientation covering map of the Klein bottle by the \,$2$-torus and let us denote by
\,$\sigma$\, the non-trivial \,$\tau$-lift of the identity map.
Consider the covering transformations
\[ T_{1}(x,y) = (x+1/2, -y) \ \ \mathrm{and} \ \  T_{2}(x,y)= (x,y+1) \]
generating the fundamental group of ${\mathbb K}^{2} = {\mathbb R}^{2} / \langle T_1, T_2 \rangle$.
Moreover we have ${\mathbb T}^{2} = {\mathbb R}^{2} / \langle T_{1}^{2}, T_2 \rangle$.

Given a subgroup \,$G \subset \mathrm{Homeo}(\mathbb{K}^{2})$\, let us denote by
\,$G_{\tau} \subset \mathrm{Homeo}(\mathbb{T}^{2})$\, the subgroup of all
\,$\tau$-lifts of elements of \,$G$. As remarked in \cite[Section 4]{rib02}, given
\,$f\in G$\, its distinct  \,$\tau$-lifts are \,$\hat{f}$\, and
\,$\sigma \circ \hat{f}$\,.
Moreover \,$\hat{f}$\, and \,$\sigma$\,  commute and, consequently, \,$G_{\tau}$\, is
nilpotent when \,$G$\, has this property.

On the other hand for each \,$f\in \mathrm{Homeo}_{0}(\mathbb{K}^{2})$\,
there exists a unique \,$\tau$-lift \,$f_{\tau}$\, of \,$f$\, in
\,$\mathrm{Homeo}_{0}(\mathbb{T}^{2})$. It can be obtained by lifting via \,$\tau$\, any isotopy from the identity of \,$\mathbb{K}^{2}$\,  to \,$f$\, starting at the identity of
\,$\mathbb{T}^{2}$. Let us remark that \,$\sigma \circ f_{\tau}$\, cannot be obtained by this process since \,$\sigma$\, is orientation reversing. In this way we construct a natural
\,$\tau$-lift \,$G_{\tau,0}\subset \mathrm{Homeo}_{0}(\mathbb{T}^{2})$\, of any
subgroup  \,$G \subset\mathrm{Homeo}_{0}(\mathbb{K}^{2})$.

For a given  \,$f\in \mathrm{Homeo}(\mathbb{K}^{2})$\, and \,$\mu \in\mathcal{P}(f)$\, we denote by \,$\mu_{\tau}$\, the Borel probability measure on \,$\mathbb{T}^{2}$\,
obtained by normalizing the lift of \,$\mu$\, via the local homeomorphism \,$\tau$. In this case  the measure \,$\mu_{\tau}$\, is the unique probability measure invariant
by \,$f_{\tau}$\, and $\sigma$ (and then also by
\,$\sigma \circ f_{\tau}$) but we lost the ergodicity for the measure \,$\mu_{\tau}$
if \,$\mu$\, is ergodic.
Let us address this issue. There exists a natural map
\[
\begin{array}{ccccc}
 \tau_{*} & : & \mathcal{P}(f_{\tau}) & \to & \mathcal{P}(f) \\
& & \nu & \mapsto & \tau_{*} \nu
\end{array}
\]
where $(\tau_{*} \nu) (B) = \nu (\tau^{-1}(B))$ for any Borel subset $B$ of $\mathbb{T}^{2}$.
Let $\mu$ be a $f$-ergodic Borel probability measure.
The measure $\mu_{\tau}$ is the unique
Borel probability measure in $(\tau_{*})^{-1}(\mu)$ such that $\sigma_{*} \mu_{\tau} = \mu_{\tau}$.
The subset $(\tau_{*})^{-1}(\mu)$ of $ \mathcal{P}(f_{\tau})$ is non-empty (it contains $\mu_{\tau}$),
compact in the weak$^{*}$ topology and convex.
Hence $(\tau_{*})^{-1}(\mu)$ has a extreme point $\nu$ by the Bauer maximum principle
(cf. \cite[Corollary 25.10]{Choquet1969}).
We claim that $\nu$ is $f_{\tau}$-ergodic. Otherwise $\nu$ is not a extreme point among $f_{\tau}$-invariant
measures, i.e. there exist $f_{\tau}$-invariant Borel probability measures $\nu_1$, $\nu_2$
and $0 < t <1$ such that $\nu_1 \neq \nu_2$ and $\nu = t \nu_1 + (1-t) \nu_2$.
We deduce $\tau_{*} \nu = t \tau_{*} \nu_1 + (1-t) \tau_{*} \nu_2$.
Since $\tau_{*} \nu= \mu$ is $f$-ergodic, it follows that $\tau_{*} \nu$ is a extreme point of the set of
$f$-invariant Borel probability measures. We obtain
$\tau_{*} \nu =  \tau_{*} \nu_1  = \tau_{*} \nu_2= \mu$.
Now the equality $\nu = t \nu_1 + (1-t) \nu_2$ (with $0 < t < 1$ and $\nu_1 \neq \nu_2$)
contradicts that $\nu$ is a extreme point of $(\tau_{*})^{-1}(\mu)$.

Since $\nu$ is ergodic,
there exists a Borel set $A$ such that $\nu (A)=1$ and
\[ \nu = \lim_{n \to \infty}  \frac{\sum_{j=0}^{n-1} \delta_{f_{\tau}^{j}(p)}}{n} \]
in the weak* topology for any $p \in  A$, where $\delta_{q}$ is the Dirac delta at $q$. Moreover,
the equality $f_{\tau} \circ \sigma = \sigma \circ f_{\tau}$ implies
\[ \sigma_{*} \nu = \lim_{n \to \infty}  \frac{\sum_{j=0}^{n-1} \delta_{f_{\tau}^{j}(p)}}{n} \]
 for any $p \in  \sigma (A)$. By construction we have $\mu (\tau(A)) =1$ and
\[  \lim_{n \to \infty}  \frac{\sum_{j=0}^{n-1} \delta_{f_{\tau}^{j}(p)}}{n} \in \{ \nu, \sigma_{*} \nu  \} \]
for any $p \in \tau^{-1} (\tau (A))$.
Since $\nu'  ( \tau^{-1} (\tau (A)) )= 1$ for any probability measure in $(\tau_{*})^{-1}(\mu)$,
we deduce that $\nu$ and $\sigma_{*} \nu$ are the unique ergodic $f_{\tau}$-invariant probability
measures by Birkhoff's ergodic theorem. The measure $\frac{\nu + \sigma_{*} \nu}{2}$
belongs to $(\tau_{*})^{-1}(\mu)$ and is $\sigma$-invariant and hence it is equal to $\mu_{\tau}$.
Notice that $\mu_{\tau}$ is ergodic if and only if $\sigma_{*} \nu = \nu$ and then
$(\tau_{*})^{-1}(\mu) = \{ \mu_{\tau} \}$.

Let $\tilde{f}_{\tau}$ be an identity lift  of $f_{\tau}$ to ${\mathbb R}^{2}$. We denote
 $ \rho_{\nu}(\tilde{f}_{\tau})  = (a,b)$. We have

\[ \lim_{n \to \infty} \frac{\tilde{f}_{\tau}^{n}(x,y) - (x,y)}{n} = \rho_{\nu}(\tilde{f}_{\tau}) \ \ \mathrm{and} \ \
 \lim_{n \to \infty} \frac{\tilde{f}_{\tau}^{n}(T_1(x,y)) - T_1(x,y)}{n} = (a,-b) \]
for any $(x,y) \in \pi^{-1}(p)$ and $\nu$-a.e. $p \in {\mathbb T}^{2}$
by Birkhoff's ergodic theorem and simple calculations.
We deduce $\rho_{\sigma_{*} \nu}(\tilde{f}_{\tau})  = (a,-b)$ and then
$\rho_{\mu_{\tau}}(\tilde{f}_{\tau})  = (a,0)$.
Thus the vector $(a, |b|)$ does not depend on the choice of the ergodic measure in $(\tau_{*})^{-1}(\mu)$
and the following concept is well-defined.

%%%%%%%%%%%%%%%
\begin{defi}
Let \,$\mu$\, be an ergodic $f$-invariant measure and \,$\tilde{f}_{\tau}$\,
be an identity lift of
\,$f_{\tau}$\, to \,${\mathbb R}^{2}$. We define  \,$\overline{\rho}_{\mu}(\tilde{f}_{\tau})$\, as the vector
\,$(a,|b|)$\, where \,$(a,b) = \rho_{\nu} (\tilde{f}_{\tau})$\, for some ergodic \,$f_{\tau}$-invariant measure \,$\nu$\, with
\,$\tau_{*} \nu = \mu$.
\end{defi}
%%%%%%%%%%%%%%

\begin{rem}
Notice that any other identity lift \,$\hat{f}_{\tau}$\, of \,$f_{\tau}$ to \,${\mathbb R}^{2}$\, is of the form
\,$T \circ \tilde{f}_{\tau}$\, where \,$T$\, is a covering transformation of \,${\mathbb T}^{2}$\, that commutes with
every map in \,$\langle T_1, T_2 \rangle$. As a consequence \,$T$\, is of the form
\,$T(x,y)= (x+m,y)$\, for some \,$m \in {\mathbb Z}$. We obtain
\,$\overline{\rho}_{\mu}(\hat{f}_{\tau})=(a+m, |b|)$.
\end{rem}

%%%%%%%%%%%%%%%%
\begin{defi}
Let \,$\mu$\, be an ergodic $f$-invariant measure. We define  \,$\overline{\rho}_{\mu}({f}_{\tau})$\, as the class of the
vector \,$(a,|b|)$\, in \,${\mathbb R}/{\mathbb Z} \times {\mathbb R}$\,
where \,$(a,b) = \overline{\rho}_{\mu} (\tilde{f}_{\tau})$\, for some identity lift \,$\tilde{f}_{\tau}$\, to
\,${\mathbb R}^{2}$.
\end{defi}

The following lemma is a trivial consequence of the previous discussion.

\begin{lem}
\label{lem:rotveck}
Let \,$f\in \mathrm{Homeo}_{0}(\mathbb{K}^{2})$.
Consider an ergodic $f$-invariant Borel probability measure \,$\mu$.
Then the following conditions are equivalent:
\begin{itemize}
\item
$\overline{\rho}_{\mu}({f}_{\tau}) \in \{0 \} \times {\mathbb Z} \subset
{\mathbb R}/{\mathbb Z} \times {\mathbb R}$ \,\rm;

\item
${\rho}_{\nu}(f_{\tau}) = 0 \in \mathbb{R}^{2}/\mathbb{Z}^{2} $\,
for some ergodic $f_{\tau}$-invariant probability measure
\,$\nu \in (\tau^{*})^{-1}(\mu)$ \,\rm;

\item
${\rho}_{\nu}(f_{\tau}) = 0 \in \mathbb{R}^{2}/\mathbb{Z}^{2} $\,
for any ergodic $f_{\tau}$-invariant probability measure
\,$\nu \in (\tau^{*})^{-1}(\mu)$.
\end{itemize}
\end{lem}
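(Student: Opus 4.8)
The plan is to reduce all three conditions to a single arithmetic statement about the pair $(a,b)$ introduced in the discussion preceding the lemma, after which the equivalences become immediate. Throughout I fix an identity lift $\tilde{f}_{\tau}$ of $f_{\tau}$ to $\mathbb{R}^{2}$ and an ergodic $f_{\tau}$-invariant measure $\nu \in (\tau_{*})^{-1}(\mu)$ with $\rho_{\nu}(\tilde{f}_{\tau}) = (a,b)$, so that, by the computations carried out above, $\rho_{\sigma_{*}\nu}(\tilde{f}_{\tau}) = (a,-b)$ and $\overline{\rho}_{\mu}(\tilde{f}_{\tau}) = (a,|b|)$.

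First I would recall from the preceding discussion that the ergodic $f_{\tau}$-invariant probability measures lying in $(\tau_{*})^{-1}(\mu)$ are exactly $\nu$ and $\sigma_{*}\nu$ (a single measure, equal to $\mu_{\tau}$, precisely when $\sigma_{*}\nu = \nu$). I would also use the standard fact that for any ergodic $\nu' \in (\tau_{*})^{-1}(\mu)$ the rotation vector $\rho_{\nu'}(f_{\tau}) \in \mathbb{R}^{2}/\mathbb{Z}^{2}$ is the reduction modulo $\mathbb{Z}^{2}$ of $\rho_{\nu'}(\tilde{f}_{\tau})$, independently of the chosen identity lift.

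Next I would translate condition $(1)$. Since $\overline{\rho}_{\mu}(f_{\tau})$ is the class of $(a,|b|)$ in $\mathbb{R}/\mathbb{Z} \times \mathbb{R}$, the membership $\overline{\rho}_{\mu}(f_{\tau}) \in \{0\} \times \mathbb{Z}$ holds if and only if $a \in \mathbb{Z}$ and $|b| \in \mathbb{Z}$, that is, if and only if $(a,b) \in \mathbb{Z}^{2}$. For conditions $(2)$ and $(3)$ I would invoke the previous paragraph: the only ergodic candidates are $\nu$ and $\sigma_{*}\nu$, whose rotation vectors reduce modulo $\mathbb{Z}^{2}$ to the classes of $(a,b)$ and $(a,-b)$; and $(a,b) \in \mathbb{Z}^{2}$ if and only if $(a,-b) \in \mathbb{Z}^{2}$. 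Hence both $\rho_{\nu}(f_{\tau}) = 0$ and $\rho_{\sigma_{*}\nu}(f_{\tau}) = 0$ are equivalent to $(a,b) \in \mathbb{Z}^{2}$, so conditions $(2)$ and $(3)$ coincide with each other and with condition $(1)$.

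There is no genuine obstacle here: the statement is purely a bookkeeping consequence of the two identities $\rho_{\sigma_{*}\nu}(\tilde{f}_{\tau}) = (a,-b)$ and $\overline{\rho}_{\mu}(\tilde{f}_{\tau}) = (a,|b|)$ already established. The only point requiring a word of care is the possible coincidence $\nu = \sigma_{*}\nu$, which merely collapses the \emph{some}/\emph{any} distinction between $(2)$ and $(3)$ without affecting the arithmetic; in either case the symmetry $b \mapsto -b$ renders the vanishing of the rotation vector insensitive to the choice of ergodic lift of $\mu$.
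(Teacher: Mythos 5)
Your proposal is correct and follows exactly the route the paper intends: the paper states the lemma is "a trivial consequence of the previous discussion," and your argument simply spells out that bookkeeping — the ergodic measures in $(\tau_{*})^{-1}(\mu)$ are $\nu$ and $\sigma_{*}\nu$ with rotation vectors $(a,b)$ and $(a,-b)$, so all three conditions reduce to $(a,b)\in\mathbb{Z}^{2}$. Nothing is missing.
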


Now we can introduce the analogue of Theorem \ref{teo:main4} for the Klein bottle.

\begin{cor}
\label{cor:main4}
Let \,$G= \langle H,\phi \rangle$\, be a nilpotent subgroup of
\,$\mathrm{Diff}_{0}^{1}({\mathbb K}^{2})$\,
where \,$H$\, is a normal subgroup of \,$G$. Suppose  there exists a $\phi$-invariant
ergodic probability measure \,$\mu$\, such that the support of \,$\mu$\,
is contained in
\,$\tau (\mathrm{Fix}(H_{\tau,0}))$\, and \,$ \overline{\rho}_{\mu}(\phi_{\tau}) \in \{0 \} \times {\mathbb Z}$.
Then \,$G$\, has a global fixed point.
\end{cor}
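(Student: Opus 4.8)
The plan is to transfer the problem to the torus via the orientation double cover \,$\tau\colon\mathbb{T}^{2}\to\mathbb{K}^{2}$\,, apply Theorem \ref{teo:main4} there, and then project the resulting global fixed point back down to \,$\mathbb{K}^{2}$. First I would pass from \,$G$\, to its natural \,$\tau$-lift \,$G_{\tau,0}=\{f_{\tau}\,;\,f\in G\}$. Since \,$f\mapsto f_{\tau}$\, selects the unique lift lying in the identity component, it is a group isomorphism onto \,$G_{\tau,0}$\, (if \,$f_{\tau}g_{\tau}$\, and \,$(fg)_{\tau}$\, are both \,$\tau$-lifts of \,$fg$\, in \,$\mathrm{Diff}_{0}^{1}(\mathbb{T}^{2})$, uniqueness forces them equal). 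Consequently \,$G_{\tau,0}\subset\mathrm{Diff}_{0}^{1}(\mathbb{T}^{2})$\, is nilpotent, \,$H_{\tau,0}$\, is a normal subgroup of \,$G_{\tau,0}$\,, and \,$G_{\tau,0}=\langle H_{\tau,0},\phi_{\tau}\rangle$. This reproduces on the torus the algebraic setting required by Theorem \ref{teo:main4}.

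Next I would produce the measure on the torus. By the ergodic decomposition argument carried out just before the corollary, the fiber \,$(\tau_{*})^{-1}(\mu)$\, contains an ergodic \,$\phi_{\tau}$-invariant probability measure \,$\nu$. The hypothesis \,$\overline{\rho}_{\mu}(\phi_{\tau})\in\{0\}\times\mathbb{Z}$\, together with Lemma \ref{lem:rotveck} then forces \,$\rho_{\nu}(\phi_{\tau})=0\in\mathbb{R}^{2}/\mathbb{Z}^{2}$, which is precisely the vanishing rotation vector hypothesis of Theorem \ref{teo:main4}.

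The step requiring genuine care is verifying \,$\mathrm{supp}(\nu)\subset\mathrm{Fix}(H_{\tau,0})$, since \,$\tau$\, is two-to-one and the hypothesis only controls the projected set \,$\tau(\mathrm{Fix}(H_{\tau,0}))$. Here I would use that \,$\sigma$\, commutes with every \,$h_{\tau}$, so that \,$\mathrm{Fix}(H_{\tau,0})$\, is \,$\sigma$-invariant and hence \,$\tau^{-1}\big(\tau(\mathrm{Fix}(H_{\tau,0}))\big)=\mathrm{Fix}(H_{\tau,0})$; combined with \,$\tau_{*}\nu=\mu$, which gives \,$\nu\big(\tau^{-1}(\mathrm{supp}(\mu))\big)=1$, and with \,$\mathrm{supp}(\mu)\subset\tau(\mathrm{Fix}(H_{\tau,0}))$, this yields \,$\mathrm{supp}(\nu)\subset\tau^{-1}(\mathrm{supp}(\mu))\subset\mathrm{Fix}(H_{\tau,0})$. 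This is the main obstacle, in the sense that it is the only place where the non-orientable nature of the cover genuinely intervenes; the remaining steps are formal.

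With every hypothesis of Theorem \ref{teo:main4} verified for the data \,$\big(G_{\tau,0},H_{\tau,0},\phi_{\tau},\nu\big)$, that theorem yields a point \,$p\in\mathbb{T}^{2}$\, with \,$f_{\tau}(p)=p$\, for all \,$f\in G$. Finally, from \,$\tau\circ f_{\tau}=f\circ\tau$\, I would conclude \,$f(\tau(p))=\tau(f_{\tau}(p))=\tau(p)$\, for every \,$f\in G$, so that \,$\tau(p)\in\mathrm{Fix}(G)$\, and \,$G$\, has a global fixed point on \,$\mathbb{K}^{2}$.
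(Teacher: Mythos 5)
Your proposal is correct and follows essentially the same route as the paper: take an ergodic measure $\nu$ in $(\tau_{*})^{-1}(\mu)$, use Lemma \ref{lem:rotveck} to get $\rho_{\nu}(\phi_{\tau})=0$, use the commutation of $\sigma$ with the lifts of $H$ to upgrade $\mathrm{supp}(\mu)\subset\tau(\mathrm{Fix}(H_{\tau,0}))$ to $\mathrm{supp}(\nu)\subset\mathrm{Fix}(H_{\tau,0})$, apply Theorem \ref{teo:main4} to $\langle H_{\tau,0},\phi_{\tau}\rangle$, and project the fixed point to $\mathbb{K}^{2}$. You correctly isolate the saturation of $\mathrm{Fix}(H_{\tau,0})$ under $\sigma$ as the one point where the double cover genuinely matters, which is exactly the step the paper's proof hinges on.
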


\begin{proof}
Let \,$\nu$\, be an  ergodic $\phi_{\tau}$-invariant Borel probability measure
\,$\nu \in (\tau^{*})^{-1}(\mu)$.
Given a point \,$p \in {\mathbb K}^{2}$\, in  \,$\mathrm{supp} (\mu)$, there exists
\,$p_{\tau} \in {\mathbb T}^{2} \cap \tau^{-1}(p) \cap \mathrm{Fix}(H_{\tau,0})$\, by hypothesis.
Since \,$\sigma$\, commutes with \,$g_{\tau}$\, for any \,$g \in H$,
we obtain \,$\mathrm{supp} (\nu) \subset \tau^{-1}(\mathrm{supp} (\mu)) \subset \mathrm{Fix}(H_{\tau,0})$.
Since \,$H_{\tau,0}$\, is normal in \,$\langle H_{\tau,0}, \phi_{\tau} \rangle$\, and
\,${\rho}_{\nu}(\phi_{\tau}) = 0 \in \mathbb{R}^{2}/\mathbb{Z}^{2} $ (Lemma \ref{lem:rotveck}), it follows that
\,$\langle H_{\tau,0}, \phi_{\tau} \rangle$\, has a global fixed point by Theorem \ref{teo:main4}.
Thus \,$\langle H, \phi \rangle$\, has a global fixed point in \,${\mathbb K}^{2}$.
\end{proof}

We can refine the previous result by replacing the condition
\,$\tau^{-1}(\mathrm{supp}(\mu)) \subset \mathrm{Fix}(H_{\tau,0})$\, with
\,$\mathrm{supp}(\mu) \subset \mathrm{Fix}(H)$.

\begin{pro}
\label{pro:main4}
Let \,$G= \langle H,\phi \rangle$\, be a nilpotent subgroup of
\,$\mathrm{Diff}_{0}^{1}({\mathbb K}^{2})$\,
where \,$H$\, is a normal subgroup of \,$G$. Suppose  there exists a $\phi$-invariant
ergodic probability measure \,$\mu$\, such that \,$\mathrm{supp}(\mu) \subset \mathrm{Fix}(H)$\,
and \,$ \overline{\rho}_{\mu}(\phi_{\tau}) \in \{0 \} \times {\mathbb Z}$.
Then \,$G$\, has a finite orbit with \,$2^{m}$\, elements where \,$m$\, is at most
the \,{\it nilpotency class}\, of \,$G$.
\end{pro}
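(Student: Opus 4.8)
The plan is to lift the problem to the torus through the orientation double cover $\tau$, to measure via a $\mathbb{Z}/2$-valued homomorphism the failure of the natural lifts of elements of $H$ to fix the two preimages of a point of $\mathrm{Fix}(H)$, and then to use the nilpotency of $G$ to bound the size of the resulting finite orbit by a power of $2$.

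First I would apply Lemma \ref{lem:rotveck} to produce an ergodic $\phi_\tau$-invariant Borel probability measure $\nu\in(\tau_{*})^{-1}(\mu)$ with $\rho_\nu(\phi_\tau)=0\in\mathbb{R}^2/\mathbb{Z}^2$; automatically $\mathrm{supp}(\nu)\subset\tau^{-1}(\mathrm{supp}(\mu))\subset\tau^{-1}(\mathrm{Fix}(H))$. For $q\in\tau^{-1}(\mathrm{Fix}(H))$ and $h\in H$ the natural lift $h_{\tau,0}$ preserves the fibre $\{q,\sigma(q)\}$, so there is $\chi_q(h)\in\mathbb{Z}/2$ with $h_{\tau,0}(q)=\sigma^{\chi_q(h)}(q)$. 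Since $\sigma$ is central among the natural lifts, $\chi_q\colon H\to\mathbb{Z}/2$ is a group homomorphism; moreover $\chi_q$ is constant on fibres, locally constant in $q$ along $\tau^{-1}(\mathrm{Fix}(H))$, and satisfies $\chi_{\phi(p)}=\chi_p\circ c_{\phi^{-1}}$ with $c_{\phi^{-1}}(h)=\phi^{-1}h\phi$, the last identity coming from $(h\phi)_{\tau,0}=\phi_\tau\circ(\phi^{-1}h\phi)_{\tau,0}$.

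Each $\chi_q$ factors through the $\mathbb{F}_2$-vector space $V:=H/([H,H]H^{2})$, and conjugation by $\phi^{-1}$ induces on $V$ the linear map $u(\bar h)=\overline{\phi^{-1}h\phi}$. Since $(u-\mathrm{id})^{k}(\bar h)$ is the class of the $k$-fold iterated commutator $[\phi^{-1},[\phi^{-1},\dots,[\phi^{-1},h]\dots]]$, which is trivial for $k=c$ because $G$ has nilpotency class $c$, one gets $(u-\mathrm{id})^{c}=0$, and dually $(u^{*}-\mathrm{id})^{c}=0$ on $V^{*}$, where $u^{*}\chi=\chi\circ c_{\phi^{-1}}$. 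I would then set $N:=\bigcap_{q\in\mathrm{supp}(\nu)}\ker\chi_q$ and let $W\subset V^{*}$ be the span of $\{\chi_q:q\in\mathrm{supp}(\nu)\}$, so that $[H:N]=|W|$. Because $\phi$ acts ergodically on $\mathrm{supp}(\mu)$, a generic point $p_0$ has dense orbit; local constancy of each $\chi_{\cdot}(h)$ together with $\chi_{\phi^{j}(p_0)}=(u^{*})^{j}\chi_{p_0}$ forces every $\chi_q$ into the cyclic $\langle u^{*}\rangle$-module generated by $\chi_{p_0}$ (a limit in the finite set $(u^{*})^{\mathbb{Z}}\chi_{p_0}$ that agrees with its members on every finite subset of $H$ must belong to it). As $(u^{*}-\mathrm{id})^{c}=0$, this module has $\mathbb{F}_2$-dimension at most $c$, whence $W$ equals it and $[H:N]=2^{\dim W}$ with $\dim W\le c$.

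Finally, $u^{*}$-invariance of $W$ makes $N=W^{\perp}$ a $\phi$-invariant, hence $\langle N,\phi\rangle$-normal, subgroup with $\mathrm{supp}(\nu)\subset\mathrm{Fix}(N_{\tau,0})$, that is $\mathrm{supp}(\mu)\subset\tau(\mathrm{Fix}(N_{\tau,0}))$. Corollary \ref{cor:main4} applied to the nilpotent group $\langle N,\phi\rangle$ then gives a point $x_0\in\mathrm{Fix}(N)\cap\mathrm{Fix}(\phi)$. Since $H$ is normal and $G=H\langle\phi\rangle$ with $\phi(x_0)=x_0$, every $g=h\phi^{k}$ satisfies $g(x_0)=h(x_0)$, so the $G$-orbit of $x_0$ is its $H$-orbit, of cardinality $[H:\mathrm{Stab}_H(x_0)]$; this divides $[H:N]=2^{\dim W}$ and is therefore $2^{m}$ with $m\le\dim W\le c$, as claimed. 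I expect the delicate point to be the third paragraph: turning ergodicity into the statement that all the $\chi_q$ lie in one cyclic module for the unipotent operator $u^{*}$, which is exactly what replaces the a priori uncontrolled index $[H:N]$ by the sharp bound $2^{m}$ with $m$ at most the nilpotency class.
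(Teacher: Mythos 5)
Your argument is correct and reaches the stated conclusion, but it is organized quite differently from the paper's proof. The paper also reduces to Corollary \ref{cor:main4}, but it gets there by replacing \,$\phi$\, with the power \,$\psi=\phi^{2^{n-1}}$\, ($n$ the nilpotency class) and \,$H$\, with a subgroup \,$H_{+}$\, of index at most \,$2$: it builds a tower of ergodic measures \,$\nu_{j}$\, for \,$\phi_{\tau}^{2^{j}}$\, with decreasing supports, proves by induction along the upper central series that \,$[\phi_{\tau}^{2^{j}},\eta_{\tau}]$\, fixes \,$\tau^{-1}(\mathrm{supp}(\nu_{j}))$\, pointwise for \,$\eta\in Z^{(j+1)}(G)\cap H$\, (the squaring at each step is what kills the \,$\sigma$-ambiguity, since a lift that permutes a fibre has square fixing it), and then runs an ergodicity/clopen-set argument separately for each \,$\eta\in H$\, to define \,$H_{+}$; the \,$2^{m}$\, in the paper's count comes essentially from the index \,$|G:\langle H_{+},\psi\rangle|$. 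You instead keep \,$\phi$\, and the single measure, encode the whole \,$\sigma$-ambiguity in the homomorphisms \,$\chi_{q}:H\to\mathbb{Z}/2$, and use the unipotency of conjugation by \,$\phi$\, on \,$\mathrm{Hom}(H,\mathbb{F}_{2})$\, together with one ergodicity/local-constancy argument to cut \,$H$\, down to \,$N$\, of index \,$2^{\dim W}\le 2^{c}$\, in one stroke; your \,$2^{m}$\, is the index \,$[H:\mathrm{Stab}_{H}(x_{0})]$. Your route avoids the tower of measures and the passage to powers of \,$\phi$, at the price of the linear-algebra packaging; both proofs hinge on the same two ingredients, namely Corollary \ref{cor:main4} and the fact that ergodicity forces the locally constant fibre-permutation data to be as rigid as the nilpotent structure allows. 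The one step worth spelling out in a final write-up is the pointwise-limit claim: since the forward orbit \,$\{(u^{*})^{j}\chi_{p_{0}}\}_{j\ge 0}$\, lies in a finite set (of cardinality at most \,$2^{c}$, because \,$(u^{*}-\mathrm{id})^{c}=0$\, makes \,$(u^{*})^{-1}$\, a polynomial in \,$u^{*}$), a pigeonhole/subsequence argument does show that \,$\chi_{q}$\, equals one of its members, as you indicate, and from there the count \,$[H:N]\le 2^{c}$\, and the divisibility of the orbit length by no more than \,$[H:N]$\, go through.
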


\begin{proof}
Let \,$\nu$\, be an  ergodic \,$\phi_{\tau}$-invariant Borel probability measure
\,$\nu$\, with \,$\tau_{*} \nu = \mu$.
We denote \,$\psi_{j} = \phi_{\tau}^{2^{j}}$ for $j \geq 0$.
We define \,$\nu_0= \nu$.
Given \,$j \geq 0$\, we consider an ergodic \,$\phi_{j+1} = \phi_{j}^{2}$-invariant Borel probability measure
\,$\nu_{j+1}$\, such that
\[ \nu_{j} = \frac{\nu_{j+1} + \phi_{j}^{*} \nu_{j+1}}{2}. \]
There exists a lift \,$\tilde{\psi}_{0}$\, of \,$\psi_0$\, to \,${\mathbb R}^{2}$\, such that
\,$\rho_{\nu_0} (\tilde{\psi}_0) = (0,0)$\, by Lemma \ref{lem:rotveck}.
We denote \,$\tilde{\psi}_{j+1} = \tilde{\psi}_{j}^{2}$\, for any \,$j \geq 0$.
We have \,$\rho_{\nu_{j+1}} (\tilde{\psi}_{j+1}) =2 \rho_{\nu_j} (\tilde{\psi}_{j})$\, for any \,$j \geq 0$\, by
construction.

Let us show, by induction on \,$j$, that
\,$[{\psi}_{j}, \eta_{\tau}]$\, fixes  \,$\tau^{-1} (\mathrm{supp}(\nu_j))$\, pointwise for all
\,$j \geq 0$\, and \,$\eta \in Z^{(j+1)}(G) \cap H$.
The result is obvious for \,$j=0$\, since \,$[\psi_{0}, \eta_{\tau}] = Id$\, for any \,$\eta \in Z^{(1)}(G) \cap H$.
Suppose the result holds for \,$j \geq 0$. Given \,$\eta \in  Z^{(j+2)}(G) \cap H$\, we have
\,$\psi_{j} \circ  \eta_{\tau} \circ \psi_{j}^{-1} = h_{\tau} \circ \eta_{\tau}$\,
for some \,$h \in Z^{(j+1)}(G) \cap H$. We get
\[ \psi_{j}^{2} \circ  \eta_{\tau} \circ \psi_{j}^{-2} =
( \psi_{j} \circ h_{\tau} \circ \psi_{j}^{-1})  \circ h_{\tau} \circ  \eta_{\tau}. \]
Since \,$[\psi_{j}, h_{\tau}]$\, fixes \,$\tau^{-1} (\mathrm{supp}(\nu_j))$\, by induction hypothesis and
\,$\mathrm{supp}(\nu_{j+1}) \subset \mathrm{supp}(\nu_{j})$,
it follows that \,$\psi_{j} \circ h_{\tau} \circ \psi_{j}^{-1} = h_{\tau}' \circ h_{\tau}$\, for some \,$h' \in Z^{(j)}(G) \cap H$\,
such that \,$h_{\tau}'$\, fixes \,$\tau^{-1} (\mathrm{supp}(\nu_{j+1}))$\, pointwise.
Since \,$[\psi_{j+1}, \eta_{\tau}] = h_{\tau}' \circ h_{\tau}^{2}$, it follows that \,$[\psi_{j+1}, \eta_{\tau}]$\,
fixes  \,$\tau^{-1} (\mathrm{supp}(\nu_{j+1}))$\, pointwise for any \,$\eta \in Z^{(j+2)}(G) \cap H$.

Let  \,$n$\, be the nilpotency class of \,$G$\, and \,$r = 2^{n-1}$. We denote \,$\psi = \phi^{r}$.
Given \,$\eta \in H$, we define the subset \,$A_{\eta}$\, of \,$\tau(\mathrm{sup}(\nu_{n-1}))$\,
of points \,$p$\, such that \,$\tau^{-1}(p) \subset \mathrm{Fix}(\eta_{\tau})$.
The property \,$\mathrm{sup}(\nu_{n-1}) \subset \mathrm{Fix} [\psi_{\tau}, \eta_{\tau}]$\,
implies that \,$A_{\eta}$\, is \,$\psi$-invariant. Now consider a \,$\nu_{n-1}$\, generic point
\,$p \in {\mathbb T}^{2}$\, and its \,$\psi_{\tau}$-orbit ${\mathcal O}$.
We have that \,$\tau ({\mathcal O})$\, is contained either in \,$A_{\eta}$\, or its complement.
Since \,$\overline{\mathcal O} = \mathrm{sup}(\nu_{n-1})$\, and both sets
\,$A_{\eta}$\, and \,$\tau (\mathrm{sup}(\nu_{n-1})) \setminus A_{\eta}$\, are
open and closed in \,$\tau(\mathrm{sup}(\nu_{n-1}))$, we deduce that
\,$A_{\eta}  = \emptyset$\, or \,$A_{\eta} = \tau(\mathrm{sup}(\nu_{n-1}))$. We define
\[ H_{+} = \{\eta \in H : \mathrm{sup}(\nu_{n-1}) \subset \mathrm{Fix}(\eta_{\tau}) \}. \]
Since \,$H$\, is a normal subgroup of \,$G$,  it follows that \,$H_{+}$\, is also a normal
subgroup of \,$\langle H_{+}, \psi \rangle$\, of index at most \,$2$\, in \,$H$.
By construction we have \,$\mathrm{sup}(\nu_{n-1}) \subset \mathrm{Fix} (H_{+})_{\tau,0}$.
This condition together with \,$\rho_{\nu_{n-1}} (\tilde{\psi}_{n-1})=(0,0)$\, allows to apply Corollary
\ref{cor:main4} to obtain a global fixed point of \,$\langle (H_{+})_{\tau,0}, \psi_{\tau} \rangle$\, and then a global
fixed point \,$p$\, of \,$\langle H_{+}, \psi \rangle$.
It is clear that \,$|\langle H, \phi \rangle : \langle H, \psi \rangle|$\, is of the form \,$2^{a}$\, for some
\,$0 \leq a \leq n-1$\, since \,$\phi^{r} = \psi$ and $r=2^{n-1}$.
Since \,$H$\, is a normal subgroup of \,$G$, it follows that \,$|\langle H, \psi \rangle : \langle H_{+}, \psi \rangle|$\,
is either \,$1$\, or \,$2$. Thus the index \,$|\langle H, \phi \rangle : \langle H_{+}, \psi \rangle|$\, is of the form \,$2^{b}$\, with \,$0 \leq b \leq n$. Consider the subgroup \,$J$\, of \,$G$\, such that
\,$J= \{ \eta \in G : \eta(p)=p\}$. The group \,$J$\, contains  \,$\langle H_{+}, \psi \rangle$\, and thus
\,$|G:J|$\, is of the form \,$2^{m}$\, with \,$0 \leq m \leq n$. Since \,$|G:J|$\, is the cardinal of the $G$-orbit of \,$p$\, the
result is proved.
\end{proof}

Let us focus now in the generalization of Theorem  \ref{teo:main2} for the Klein bottle.

\begin{defi}
We say that \,$f\in \mathrm{Homeo}_{0}(\mathbb{K}^{2})$\, is \,{\it irrotational}\, when the
\,$\tau$-lift \,$f_{\tau}\in \mathrm{Homeo}_{0}(\mathbb{T}^{2})$\, is irrotational.
\end{defi}

\begin{rem}
An alternative definition of irrotational element of \,$\mathrm{Homeo}(\mathbb{K}^{2})$\, is the following:
an element \,$f$\, of \,$\mathrm{Homeo}(\mathbb{K}^{2})$\, is irrotational if its orientation-preserving lift
\,$f_{+}$\, to \,${\mathbb T}^{2}$\, is irrotational. A priori the definitions could be different since a non-isotopic
to the identity \,$f \in \mathrm{Homeo}({\mathbb K}^{2})$\, can have an isotopic to the identity lift \,$f_{+}$.
An example is the homeomorphism induced by \,$\tilde{f}(x,y) = (x,y+1/2)$\, in the Klein bottle.
Anyway we claim that both definitions are equivalent. It suffices to
show that if \,$f$\, satisfies the second definition then \,$f$\, belongs to \,$\mathrm{Homeo}_{0}(\mathbb{K}^{2})$.
Let \,$\tilde{f}$\, be the irrotational lift of \,$f_{+}$. The sequence of maps \,$\frac{\tilde{f}^{n}-Id}{n}$\, converges
uniformly to the constant map \,$(0,0)$\, when \,$n \to \infty$. Since \,$f_{+}$\, is isotopic to the identity, it follows that
\,$\tilde{f} \circ T = T \circ \tilde{f}$\, for any \,$T \in \langle T_{1}^{2}, T_{2} \rangle$.
Moreover since \,$f_{+} \circ \sigma = \sigma \circ f_{+}$, we obtain
\[ \tilde{f} \circ T_{1} \circ \tilde{f}^{-1} = (x+a, y+b) \circ T_1 \]
for some \,$(a,b) \in {\mathbb Z}^{2}$. We deduce
\[ \tilde{f}^{n} \circ T_{1} \circ \tilde{f}^{-n} = (x+na, y+nb) \circ T_1 \]
for any \,$n \in {\mathbb Z}$. This leads us to
\[ \frac{\tilde{f}^{n}   - Id}{n} \circ T_1 = \frac{ (x+na, y+nb) \circ T_1 \circ \tilde{f}^{n} - T_1}{n} =
(a,b) +  (x, -y) \circ \frac{\tilde{f}^{n}-Id}{n}. \]
The left hand side of the previous expression converges uniformly to \,$(0,0)$\, whereas the right hand side
converges uniformly to \,$(a,b)$\, when \,$n \to \infty$.
Hence we get \,$(a,b)=(0,0)$\, and thus \,$\tilde{f}$\, commutes with every
covering transformation in \,$\langle T_1, T_2 \rangle$. As a consequence \,$f$\, is isotopic to the identity map.
\end{rem}

We have the following immediate corollary of Theorem  \ref{teo:main2}.

%%%%%%%%%%%%%%%%%%%%%%%%%%%%%%%%%%%%%%%%%%%%%
\vskip10pt
\begin{cor}\label{teo:main2:corol:strip}
Let \,$G$\, be an irrotational nilpotent subgroup of \,$\mathrm{Diff}_{0}^{1}({\mathbb K}^{2})$.
Then \,$G$\, has a global fixed point.
\end{cor}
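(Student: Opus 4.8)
The plan is to lift the problem from the Klein bottle to the torus via the natural $\tau$-lift $G_{\tau,0}$ and then invoke Theorem \ref{teo:main2}. First I would observe that the assignment $f \mapsto f_{\tau}$, sending each $f \in \mathrm{Homeo}_{0}(\mathbb{K}^{2})$ to its unique $\tau$-lift in $\mathrm{Homeo}_{0}(\mathbb{T}^{2})$, is multiplicative: the concatenation of two isotopies starting at the identity of $\mathbb{K}^{2}$ lifts to an isotopy starting at the identity of $\mathbb{T}^{2}$, so by uniqueness of the identity-starting lift one has $(f \circ g)_{\tau} = f_{\tau} \circ g_{\tau}$. Consequently $G_{\tau,0} = \{ f_{\tau} : f \in G\}$ is a subgroup of $\mathrm{Diff}_{0}^{1}(\mathbb{T}^{2})$ isomorphic to $G$ (this is exactly the content of the projection $\kappa$ being an isomorphism in the definition of a $\tau$-lift), and in particular $G_{\tau,0}$ is nilpotent because $G$ is.

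Next I would check that $G_{\tau,0}$ is irrotational. By hypothesis every $f \in G$ is irrotational, which by the definition of irrotational elements of $\mathrm{Homeo}_{0}(\mathbb{K}^{2})$ means precisely that $f_{\tau} \in \mathrm{Homeo}_{0}(\mathbb{T}^{2})$ is irrotational. Hence every element of $G_{\tau,0}$ is irrotational, so $G_{\tau,0}$ is an irrotational nilpotent subgroup of $\mathrm{Diff}_{0}^{1}(\mathbb{T}^{2})$ and Theorem \ref{teo:main2} applies, producing a point $\tilde{p} \in \mathbb{T}^{2}$ with $f_{\tau}(\tilde{p}) = \tilde{p}$ for all $f \in G$.

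Finally I would descend back to $\mathbb{K}^{2}$. Setting $p = \tau(\tilde{p})$ and using that $f_{\tau}$ is a $\tau$-lift of $f$, i.e. $\tau \circ f_{\tau} = f \circ \tau$, one obtains
\[
f(p) = f(\tau(\tilde{p})) = \tau(f_{\tau}(\tilde{p})) = \tau(\tilde{p}) = p
\]
for every $f \in G$, so $p \in \mathrm{Fix}(G)$ and $G$ has a global fixed point.

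I do not expect a serious obstacle here, since the statement is genuinely a corollary. The only points requiring justification are that the identity-starting $\tau$-lift is multiplicative (so that $G_{\tau,0}$ is an isomorphic, hence nilpotent, copy of $G$) and that irrotationality of $f$ on $\mathbb{K}^{2}$ is by definition irrotationality of $f_{\tau}$ on $\mathbb{T}^{2}$; both facts are already built into the framework established before the corollary, so the argument reduces to the descent computation above.
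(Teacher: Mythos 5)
Your argument is correct and is precisely the intended one: the paper states this as an immediate corollary of Theorem \ref{teo:main2}, relying on exactly the facts you spell out (that $f\mapsto f_{\tau}$ gives an isomorphic, hence nilpotent, irrotational copy $G_{\tau,0}$ in $\mathrm{Diff}_{0}^{1}(\mathbb{T}^{2})$, and that a global fixed point upstairs projects via $\tau$ to one on $\mathbb{K}^{2}$). No gaps.
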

%%%%%%%%%%%%%%%%%%%%%%%%%%%%%%%%%%%%%%%%%%%%%

Before to announce the corollaries of Theorems \ref{teo:main3} and \ref{teo:main} let us recall
another property of the group \,$G_{\tau}$.

Given  a subgroup \,$G\subset\mathrm{Homeo}(\mathbb{K}^{2})$\, let us denote by
\,$G_{\tau,+}$\, the subgroup of all the elements of \,$G_{\tau}$\, that are orientation preserving. It is not difficult to see that \,$G_{\tau,+}$\, is a \,$\tau$-lift of
\,$G$.
Since \,$f_{\tau}$\, and \,$\sigma$\, commute for all \,$f\in G$\, then we can also conclude that
\,$[\,G_{\tau}\,,G_{\tau}\,]=[\,G_{\tau,+}\,,G_{\tau,+}\,]$\,.

%%%%%%%%%%%%%%%%%%%%%%%%%%%%%%%%%%%%%%%%%%%%%%%%%%%%%%%
\vskip10pt
\begin{cor}\label{teo:main3:corol:strip}
If \,$G$\, is a nilpotent subgroup of \,$\mathrm{Homeo}({\mathbb K}^{2})$ then$\,:$
\begin{itemize}
\item[$(i)$]
$G'$\, is irrotational when \,$G \subset \mathrm{Homeo}_{0}({\mathbb K}^{2})\,;$

\item[$(ii)$]
$G''$\,  is irrotational when  \,$G \subset \mathrm{Homeo}({\mathbb K}^{2})$\,.

\end{itemize}
\end{cor}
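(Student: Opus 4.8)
The plan is to transfer each statement to the torus \,$\mathbb{T}^{2}$\, through the orientation double cover \,$\tau$\, and then invoke Theorem \ref{teo:main3}. The guiding observation is that, in each case, the passage to $\tau$-lifts gives a canonical group isomorphism from \,$G$\, onto a nilpotent subgroup of \,$\mathrm{Homeo}(\mathbb{T}^{2})$\, lying in the appropriate subgroup (\,$\mathrm{Homeo}_{0}$\, or \,$\mathrm{Homeo}_{+}$\,), under which derived subgroups correspond and irrotationality is preserved.

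For part $(i)$, suppose \,$G \subset \mathrm{Homeo}_{0}(\mathbb{K}^{2})$. First I would observe that the assignment \,$f \mapsto f_{\tau}$, sending each \,$f \in G$\, to its unique isotopic-to-identity $\tau$-lift, is a group isomorphism from \,$G$\, onto \,$G_{\tau,0}$: multiplicativity follows from uniqueness of the isotopic-to-identity lift, since \,$f_{\tau} \circ g_{\tau}$\, is isotopic to the identity and covers \,$fg$. Hence \,$G_{\tau,0}$\, is a nilpotent subgroup of \,$\mathrm{Homeo}_{0}(\mathbb{T}^{2})$\, and \,$f \mapsto f_{\tau}$\, carries \,$G'$\, onto \,$(G_{\tau,0})'$. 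By Theorem \ref{teo:main3}$(i)$ the group \,$(G_{\tau,0})'$\, is irrotational; equivalently, for every \,$g \in G'$\, the lift \,$g_{\tau}$\, is irrotational on \,$\mathbb{T}^{2}$. By the definition of irrotational elements of \,$\mathrm{Homeo}_{0}(\mathbb{K}^{2})$\, this says precisely that \,$g$\, is irrotational, so \,$G'$\, is irrotational.

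For part $(ii)$, suppose \,$G \subset \mathrm{Homeo}(\mathbb{K}^{2})$, where the identity lift is no longer available and I would use the orientation-preserving lift instead. Since \,$\sigma$\, is orientation-reversing, exactly one of the two $\tau$-lifts \,$\hat{f}$\, and \,$\sigma \circ \hat{f}$\, of \,$f$\, is orientation-preserving, so \,$f \mapsto f_{+}$\, is well-defined; it is a homomorphism because the composition of two orientation-preserving lifts is orientation-preserving, and it is exactly the inverse of the isomorphism \,$\kappa: G_{\tau,+} \to G$. Thus \,$G_{\tau,+}$\, is a nilpotent subgroup of \,$\mathrm{Homeo}_{+}(\mathbb{T}^{2})$\, isomorphic to \,$G$, and \,$f \mapsto f_{+}$\, maps \,$G''$\, onto \,$(G_{\tau,+})''$. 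Theorem \ref{teo:main3}$(ii)$ gives that \,$(G_{\tau,+})''$\, is irrotational, so \,$g_{+}$\, is irrotational for every \,$g \in G''$; by the equivalent orientation-preserving-lift characterization of irrotationality recalled in the preceding remark, every such \,$g$\, is irrotational (and, in particular, isotopic to the identity). Therefore \,$G''$\, is irrotational.

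The only point that requires genuine care — and hence the main obstacle — is verifying that the two lift assignments are bona fide group isomorphisms onto nilpotent subgroups of \,$\mathrm{Homeo}(\mathbb{T}^{2})$\, sitting in the correct subgroup, and that under them the torus notion of irrotationality matches the Klein-bottle notion appearing in the statement. Both facts are supplied by the preliminary discussion: \,$G_{\tau,0}$\, and \,$G_{\tau,+}$\, are $\tau$-lifts of \,$G$\, (hence isomorphic to \,$G$\, and nilpotent), and the definition together with the subsequent remark identify irrotationality of \,$g$\, on \,$\mathbb{K}^{2}$\, with irrotationality of \,$g_{\tau}$\, (respectively \,$g_{+}$\,) on \,$\mathbb{T}^{2}$. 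Once these identifications are in place, the corollary is an immediate consequence of Theorem \ref{teo:main3}.
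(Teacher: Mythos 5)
Your proposal is correct and follows exactly the route the paper intends (the corollary is stated without an explicit proof, immediately after the paper sets up the $\tau$-lifts \,$G_{\tau,0}$\, and \,$G_{\tau,+}$\,, the identity \,$[\,G_{\tau}\,,G_{\tau}\,]=[\,G_{\tau,+}\,,G_{\tau,+}\,]$\, and the remark identifying the two notions of irrotationality on \,$\mathbb{K}^{2}$). Transferring \,$G$\, isomorphically to \,$G_{\tau,0}\subset\mathrm{Homeo}_{0}(\mathbb{T}^{2})$\, in case $(i)$ and to \,$G_{\tau,+}\subset\mathrm{Homeo}_{+}(\mathbb{T}^{2})$\, in case $(ii)$, applying Theorem \ref{teo:main3}, and pulling irrotationality back through the lift correspondence is precisely the intended argument.
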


%%%%%%%%%%%%%%%%%%%%%%%%%%%%%%%%%%%%%%%%%%%%%%%%%%%%%%%
\vskip10pt
\begin{cor}\label{teo:main:corol:strip}
Let \,$G$\, be a nilpotent subgroup of \,$\mathrm{Diff}^{1}({\mathbb K}^{2})$. The following subgroups of \,$G$\, have a global fixed point$\,:$
\begin{itemize}
\item[$(i)$]
$G'$\, when \,$G \subset \mathrm{Diff}^{1}_{0}({\mathbb K}^{2})\,;$

\item[$(ii)$]
$G''$\, when \,$G \subset \mathrm{Diff}^{1}({\mathbb K}^{2})$\,.

\end{itemize}
\end{cor}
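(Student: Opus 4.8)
The plan is to follow verbatim the two-line argument used for Theorem \ref{teo:main}, simply replacing the torus ingredients by their Klein bottle counterparts, namely Corollaries \ref{teo:main3:corol:strip} and \ref{teo:main2:corol:strip}. Once these two analogues are in place, both items reduce to a short chaining and no further dynamical input is required.

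For item $(i)$, let $G \subset \mathrm{Diff}_0^1(\mathbb{K}^2)$ be nilpotent. The derived subgroup $G' = [G,G]$ is a subgroup of the nilpotent group $G$, hence nilpotent, and $G' \subset G \subset \mathrm{Diff}_0^1(\mathbb{K}^2)$. By Corollary \ref{teo:main3:corol:strip}$(i)$ the group $G'$ is irrotational, so $G'$ is an irrotational nilpotent subgroup of $\mathrm{Diff}_0^1(\mathbb{K}^2)$ and Corollary \ref{teo:main2:corol:strip} yields $\mathrm{Fix}(G') \neq \emptyset$.

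For item $(ii)$, I would start from a nilpotent $G \subset \mathrm{Diff}^1(\mathbb{K}^2)$, so that $G'' = [G',G']$ is again nilpotent. Corollary \ref{teo:main3:corol:strip}$(ii)$ says that $G''$ is irrotational, and irrotationality entails, by the definition of irrotational element of $\mathrm{Homeo}(\mathbb{K}^2)$ together with the remark preceding Corollary \ref{teo:main2:corol:strip}, that $G'' \subset \mathrm{Homeo}_0(\mathbb{K}^2)$. Hence $G'' \subset \mathrm{Homeo}_0(\mathbb{K}^2) \cap \mathrm{Diff}^1(\mathbb{K}^2) = \mathrm{Diff}_0^1(\mathbb{K}^2)$, and $G''$ is an irrotational nilpotent subgroup of $\mathrm{Diff}_0^1(\mathbb{K}^2)$. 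Corollary \ref{teo:main2:corol:strip} then gives $\mathrm{Fix}(G'') \neq \emptyset$.

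The argument carries no real obstacle, since all the work sits in the previously established corollaries. The only bookkeeping points to verify are that a derived subgroup of a nilpotent group is nilpotent (immediate) and, in $(ii)$, that irrotationality forces membership in the isotopic-to-the-identity component; this last inclusion $G'' \subset \mathrm{Diff}_0^1(\mathbb{K}^2)$ is precisely what makes the hypotheses of Corollary \ref{teo:main2:corol:strip} applicable and is the single spot where a moment of care is needed.
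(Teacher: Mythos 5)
Your proof is correct and coincides with the argument the paper intends: the corollary is stated there without an explicit proof, being exactly the two-line chaining of Corollaries \ref{teo:main3:corol:strip} and \ref{teo:main2:corol:strip} that mirrors the proof of Theorem \ref{teo:main}. Your extra care in item $(ii)$ --- invoking the remark preceding Corollary \ref{teo:main2:corol:strip} to place \,$G''$\, inside \,$\mathrm{Diff}^{1}_{0}({\mathbb K}^{2})$\, --- is precisely what makes the hypotheses of that corollary applicable, so nothing is missing.
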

%%%%%%%%%%%%%%%%%%%%%%%%%%%%%%%%%%%%%%%%%%%%%%%%%%%%%%%%

%\vglue10pt

For the compact annulus we have the following version for Theorem \ref{teo:main4}.

%%%%%%%%%%%%%%%%%%%%%%%%%%%%%%%%%%
\vskip10pt
\begin{cor}\label{teo:main4:corol:01}

Let \,$\mathcal{N}=\langle \mathcal{H},\psi \rangle$\, be a nilpotent subgroup of
\,$\mathrm{Diff}^{1}_{0}(\mathbb{S}^{1}\!\times[0\,,1])$\,
where \,$\mathcal{H}$\, is a normal subgroup of  \,$\mathcal{N}$.
Suppose there exists a \,$\psi$-invariant ergodic probability measure \,$\mu$\, such that
the support of \,$\mu$\, is contained in \,$\mathrm{Fix}(\mathcal{H})$\, and
\,$\rho_{\mu}(\psi)$\, is the zero element in \,$\mathbb{R}/\mathbb{Z}$. Then
\,$\mathcal{N}$\, has a global fixed point.

\end{cor}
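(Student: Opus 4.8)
The plan is to reduce the statement to Theorem \ref{teo:main4} by doubling the annulus along its boundary to obtain the torus. Realize $\mathbb{T}^{2}=\mathbb{R}^{2}/\mathbb{Z}^{2}$ together with the isometric involution $\iota(x,y)=(x,-y)$, whose fixed-point set consists of the two circles $\{y=0\}$ and $\{y=1/2\}$ and whose quotient $q:\mathbb{T}^{2}\to\mathbb{T}^{2}/\iota$ is diffeomorphic to the compact annulus $A=\mathbb{S}^{1}\times[0,1]$, the latter being identified with the fundamental domain $A_{1}=\mathbb{S}^{1}\times[0,1/2]$. Since every element of $\mathcal{N}\subset\mathrm{Diff}^{1}_{0}(A)$ is isotopic to the identity, it preserves each boundary circle, so I can double it by setting $\hat{f}$ equal to $f$ on $A_{1}$ and to $\iota\circ f\circ\iota$ on $A_{2}=\iota(A_{1})$. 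The assignment $f\mapsto\hat{f}$ is an injective homomorphism into the group of homeomorphisms of $\mathbb{T}^{2}$ commuting with $\iota$; hence $\hat{\mathcal{N}}=\langle\hat{\mathcal{H}},\hat{\psi}\rangle$ is again nilpotent with $\hat{\mathcal{H}}$ normal, and each $\hat{f}$ is isotopic to the identity because a doubled isotopy joins it to the identity. The point requiring care is that $\hat{f}$ be of class $C^{1}$; I address this below.

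Next I transfer the measure and verify the hypotheses of Theorem \ref{teo:main4}. Viewing $\mu$ as a measure $\hat{\mu}$ supported on $A_{1}$, the invariance of $A_{1}$ under $\hat{\psi}$ together with $\hat{\psi}|_{A_{1}}=\psi$ shows that $\hat{\mu}$ is $\hat{\psi}$-invariant and ergodic. Since $\mathrm{supp}(\mu)\subset\mathrm{Fix}(\mathcal{H})$ and $\hat{h}|_{A_{1}}=h$, I get $\mathrm{supp}(\hat{\mu})\subset\mathrm{Fix}(\hat{\mathcal{H}})$. It remains to show that $\rho_{\hat{\mu}}(\hat{\psi})$ is the zero class in $\mathbb{R}^{2}/\mathbb{Z}^{2}$ for a suitable lift $\tilde{\hat{\psi}}$. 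The first coordinate records the average displacement in the $x$-direction, which is exactly the annulus rotation number $\rho_{\mu}(\psi)$; as this vanishes in $\mathbb{R}/\mathbb{Z}$, I may fix the lift so that this coordinate is literally zero. For the second coordinate I take the lift obtained by doubling the canonical annulus lift, which keeps the lift of $A_{1}$ inside the bounded strip $\mathbb{R}\times[0,1/2]$; thus the $y$-displacement of $\hat{\mu}$-almost every orbit stays bounded, its Birkhoff averages tend to $0$, and by ergodicity the integral, i.e. the second coordinate of $\rho_{\hat{\mu}}(\tilde{\hat{\psi}})$, is $0$. Hence $\rho_{\hat{\mu}}(\hat{\psi})=(0,0)$.

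With these verifications, Theorem \ref{teo:main4} applied to $\hat{\mathcal{N}}=\langle\hat{\mathcal{H}},\hat{\psi}\rangle$ yields a global fixed point $\hat{p}\in\mathbb{T}^{2}$ of $\hat{\mathcal{N}}$. Because every $\hat{f}$ commutes with $\iota$ and therefore descends to $f$ on the quotient ($q\circ\hat{f}=f\circ q$), the projected point $q(\hat{p})\in A$ satisfies $f(q(\hat{p}))=q(\hat{p})$ for every $f\in\mathcal{N}$; that is, $q(\hat{p})\in\mathrm{Fix}(\mathcal{N})$, the desired global fixed point on the annulus.

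The main obstacle is the $C^{1}$-regularity of the doubled maps along the two branch circles, which is needed because the proof of Theorem \ref{teo:main4} uses the Thurston decomposition of Franks--Handel--Parwani and hence differentiability. A direct computation at a boundary point shows that the one-jets of $f$ and of $\iota\circ f\circ\iota$ agree there precisely when the normal shear $\partial_{y}f_{1}$ vanishes on the boundary, which is not automatic. I would remove this obstruction by first reducing to the finitely generated case (a global fixed point for every finitely generated subgroup produces one for $\mathcal{N}$ by the finite intersection property on $\mathbb{T}^{2}$, exactly as at the end of the proof of Proposition \ref{pro:boucas}), and then normalizing the action in a collar of $\partial A$ so that each element acquires the product form $(x,y)\mapsto(g_{f}(x),y)$ near the boundary, for which the shear vanishes and the double is genuinely $C^{1}$. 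Constructing such a collar normalization compatibly with the nilpotent group structure is the crux of the argument.
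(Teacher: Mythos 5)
Your reduction is exactly the one the paper uses: double the annulus to the torus, double each element of $\mathcal{N}$ to get a nilpotent group $\langle \hat{\mathcal{H}},\hat{\psi}\rangle$ of isotopic-to-the-identity maps of $\mathbb{T}^{2}$, transport $\mu$ to one copy of the annulus, check that the rotation vector vanishes (your argument for the second coordinate, via boundedness of the $y$-displacement on the invariant strip and Birkhoff's theorem, is correct), and apply Theorem \ref{teo:main4}. You also correctly identify the one genuine obstruction, namely that the doubled maps are in general only homeomorphisms along the two branch circles, while Theorem \ref{teo:main4} requires $C^{1}$ regularity.

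However, that obstruction is precisely the point your proposal does not resolve: you state that one should normalize the action in a collar of $\partial A$ to a product form $(x,y)\mapsto(g_{f}(x),y)$ and then concede that constructing such a normalization compatibly with the group structure is ``the crux of the argument.'' As written this is a gap, not a proof. Note also that demanding a literal product form near the boundary is stronger than necessary and is not obviously achievable for a whole nilpotent group. The paper closes this gap by invoking a theorem of K.~Parkhe \cite{Kiran} (Theorem 5 and Remark 6 of Section 2 there): there is a single homeomorphism of the annulus, supported in a small neighborhood of the boundary, whose conjugate of every element of $\mathcal{N}$ is a $C^{1}$-diffeomorphism that glues with itself to a $C^{1}$-diffeomorphism of the double. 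Since the conjugation is by one fixed homeomorphism, it preserves nilpotency, normality of $\mathcal{H}$, the invariant ergodic measure and the vanishing of the rotation vector, after which Theorem \ref{teo:main4} applies verbatim. So your argument becomes complete once you replace your unproven collar normalization by this smooth-gluing result (or supply an independent proof of it); without that, the key analytic step is missing.
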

\vskip5pt
%%%%%%%%%%%%%%%%%%%%%%%%%%%%%%%%%%

\begin{proof}

To detect a global fixed point for \,$\mathcal{N}$\, we consider the double
\,$\mathbb{T}^{2}$\, of the annulus and the double of each element of \,$\mathcal{N}$.
This construction give us a nilpotent subgroup \,$G=\langle H,\phi \rangle$\, of
\,$\mathrm{Homeo}_{0}(\mathbb{T}^{2})$\, where \,$H$\, is a normal subgroup of \,$G$\, and \,$\phi$\, is the double of \,$\psi$. Moreover, let us fix one of the copies of the annulus in the double \,$\mathbb{T}^{2}$\, and let us consider the measure
\,$\mu$\, on this copy as a measure on the double \,$\mathbb{T}^{2}$. We have that \,$\mu$\, is a
\,$\phi$-invariant ergodic probability measure on \,$\mathbb{T}^{2}$\, whose support is contained
in \,$\mathrm{Fix}(H)$\, and \,$\rho_{\mu}(\phi)$\, is the null element of
\,$\mathbb{R}^{2}/\mathbb{Z}^{2}$.

Let us recall that in the above construction the group \,$G$\, is a subgroup of
\,$\mathrm{Homeo}_{0}(\mathbb{T}^{2})$\,.
At this point to apply  Theorem \ref{teo:main4} we can smooth the action by using
a result of K. Parkhe in
\cite{Kiran}. He proves (cf. Theorem 5 and Remark 6 in section 2) that there exists a
homeomorphism of the annulus, supported in a small neighborhood of its boundary such that: the conjugate
of each element of \,$\mathcal{N}$\, by this homeomorphism gives a \,$C^{1}$-diffeomorphism that can be glued with
itself  to obtain a \,$C^{1}$-\,diffeomorphism in the double \,$\mathbb{T}^{2}$ of the annulus.

Of course the \,$C^{1}$-\,diffeomorphism corresponding to the map \,$\phi$\, and the measure corresponding to the measure \,$\mu$\, via the conjugation
 stay with the same properties  as above. Now, we can apply Theorem \ref{teo:main4} to conclude the existence of a global fixed point
in the annulus.
\end{proof}

Notice that if  \,$\mathrm{supp}(\mu)$\, is contained in the annulus boundary \,${\mathcal C}$\, then
\,$\mathcal{N}$\, has a global fixed point in \,$\mathcal{C}$\, even when \,$\mathcal{N}$\, is
contained in \,$\mathrm{Homeo}_{0}(\mathbb{S}^{1}\!\times [0\,,1])$\, by Proposition
\ref{T1:1:circle}.

\vglue10pt

Following the strategy of passing to the double of the annulus as in the proof of Corollary \ref{teo:main4:corol:01} we can easily adapt the statements of
Theorems \ref{teo:main2} , \ref{teo:main3} and \ref{teo:main} for the case \,$\mathbb{S}^{1}\! \times [0\,,1]$\,.
Since \,$G' \subset \mathrm{Homeo}_{0} (\mathbb{S}^{1}\! \times [0\,,1])$,  the second derived
group \,$G''$\, has always a global fixed point.
In a similar way we can give versions of Corollaries \ref{cor:main4}, \ref{teo:main2:corol:strip},
\ref{teo:main3:corol:strip} and \ref{teo:main:corol:strip} and Proposition \ref{pro:main4}
for the compact M\"obius strip.

\vglue40pt

\def\cprime{$'$}
\providecommand{\bysame}{\leavevmode\hbox to3em{\hrulefill}\thinspace}
\providecommand{\MR}{\relax\ifhmode\unskip\space\fi MR }
% \MRhref is called by the amsart/book/proc definition of \MR.
\providecommand{\MRhref}[2]{%
  \href{http://www.ams.org/mathscinet-getitem?mr=#1}{#2}
}
\providecommand{\href}[2]{#2}

\end{document}